\DeclareMathOperator*{\argmax}{argmax}
\DeclareMathOperator*{\argmin}{argmin}
\DeclareMathOperator*{\essinf}{ess\,inf}
\newtheorem{theorem}{Theorem}[section]
\newtheorem{lemma}[theorem]{Lemma}
\newtheorem{corollary}[theorem]{Corollary}
\newtheorem{proposition}[theorem]{Proposition}
\newtheorem{definition}[theorem]{Definition}
\newtheorem{assumption}[theorem]{Assumption}
\newtheorem{remark}[theorem]{Remark}
\title{Non-convex entropic mean-field optimization via Best Response flow}
\author{%
  Razvan-Andrei Lascu\\
  RIKEN AIP\\
  \texttt{razvan-andrei.lascu@riken.jp} \\
   \And
  Mateusz B. Majka \\
    Heriot-Watt University \\
   \texttt{m.majka@hw.ac.uk} \\
}
\begin{document}

\maketitle

\begin{abstract}
  We study the problem of minimizing non-convex functionals on the space of probability measures, regularized by the relative entropy (KL divergence) with respect to a fixed reference measure, as well as the corresponding problem of solving entropy-regularized non-convex-non-concave min-max problems. We utilize the Best Response flow (also known in the literature as the fictitious play flow) and study how its convergence is influenced by the relation between the degree of non-convexity of the functional under consideration, the regularization parameter and the tail behaviour of the reference measure. In particular, we demonstrate how to choose the regularizer, given the non-convex functional, so that the Best Response operator becomes a contraction with respect to the $L^1$-Wasserstein distance, which ensures the existence of its unique fixed point that is then shown to be the unique global minimizer for our optimization problem. This extends recent results where the Best Response flow was applied to solve convex optimization problems regularized by the relative entropy with respect to arbitrary reference measures, and with arbitrary values of the regularization parameter. Our results explain precisely how the assumption of convexity can be relaxed, at the expense of making a specific choice of the regularizer. Additionally, we demonstrate how these results can be applied in reinforcement learning in the context of policy optimization for Markov Decision Processes and Markov games with softmax parametrized policies in the mean-field regime.
\end{abstract}
\section{Introduction}
\label{sec:Introduction}
We consider the problem of minimizing an entropy-regularized, non-convex functional $F: \mathcal{P}_1(\mathbb R^d) \to \mathbb R$ over the Wasserstein space $\left(\mathcal{P}_1(\mathbb R^d), \mathcal{W}_1\right)$, that is,
\begin{equation}
\label{eq:mean-field-min-problem-opt}
\min_{\nu \in \mathcal{P}_1(\mathbb R^d)} F^{\sigma}(\nu), \text{ with } F^{\sigma}(\nu)\coloneqq F(\nu) + \sigma \operatorname{KL}(\nu|\xi),
\end{equation}
where the function $F$ is bounded below, i.e., $\inf_{\nu \in \mathcal{P}_1(\mathbb R^d)} F(\nu) > -\infty,$ $\xi \in \mathcal{P}_1(\mathbb{R}^d)$ is a fixed reference probability measure with finite first moment and $\sigma > 0$ is a regularization parameter, while $\operatorname{KL}$ and $\mathcal{W}_1$ denote the KL-divergence (relative entropy) and the $L^1$-Wasserstein distance, respectively. 

In recent years, there has been considerable interest in such problems, motivated by their applications in machine learning, including the task of training two-layer neural networks (NNs) in the mean-field
regime \cite{10.1214/20-AIHP1140,Nitanda2022ConvexAO,chizat2022meanfield,Chizat2018OnTG,Mei2018AMF,SIRIGNANO20201820,rotskoff} and Reinforcement Learning (RL) \cite{agazzi2021global,leahy,yamamoto24a,malo2024convexregularizationconvergencepolicy}. In particular, several works studied problem \eqref{eq:mean-field-min-problem-opt} in a setting where $F$ is assumed to be convex, by utilizing various gradient flows such as the Best Response/fictitious play flow \cite{zhenjiefict,nitanda_fictitious}, the Wasserstein gradient flow \cite{10.1214/20-AIHP1140,Nitanda2022ConvexAO}, or the Fisher-Rao gradient flow \cite{https://doi.org/10.48550/arxiv.2206.02774}. 

In these works, the convergence of the flow to the minimizer of \eqref{eq:mean-field-min-problem-opt} holds for any value of $\sigma > 0$ (although the convergence rate may degenerate when $\sigma$ approaches zero) and for an arbitrary reference measure $\xi,$ which indicates that for a convex $F$, one needs to add \emph{any} regularization to ensure convergence. The exact form of the regularizer matters only for the convergence rate, but not for the sheer fact that the convergence holds. In the present paper we focus on the Best Response flow and we make an observation that, in order to ensure its convergence, the assumption about the convexity of $F$ is superfluous. One can demonstrate convergence for a non-convex $F$, as long as a specific choice of an appropriate regularizer is made. This will be precisely characterized by a formula that links the degree of non-convexity of $F,$ the value of $\sigma$ and the first moment of the reference measure $\xi$, see \eqref{eq:contractivity_condition} in Section \ref{sec:main} and the discussion therein. Hence we extend the results from \cite{zhenjiefict} to non-convex functions $F$.

On the other hand, as motivation for this extension, we study applications in RL in the context of policy optimization for Markov Decision Processes (MDPs) with softmax parametrized policies with single-hidden-layer neural network in the mean-field regime \cite{agazzi2021global,leahy,yamamoto24a,malo2024convexregularizationconvergencepolicy}. In order to explain the main ideas, let us focus on the simplified setting of a one-state MDP (also known in the literature as the bandit problem \cite{agazzi2021global,mei20b}; for the general setting, see Section \ref{sec:mdp}), where our goal is to minimize the function
\begin{equation}
\label{eq:bandit}
    F(\nu) =  \int_{A} c(a) \pi_{\nu}(\mathrm{d}a),
\end{equation}
for a cost function $c:A\to\mathbb R,$ where $A$ is a Polish space denoting the player's action space and the measure $\pi_{\nu}$ represents the policy.
Note that in the context of the bandit problem, one typically considers the goal of maximizing the reward, but here for the sake of notational consistency with \cite{zhenjiefict} and the related optimization literature discussed above, we consider the equivalent problem of minimizing the cost. In the setting of \eqref{eq:bandit}, for any measure $\nu \in \mathcal{P}_1(\mathbb{R}^d)$, the mean-field softmax policy $\pi_{\nu} \in \mathcal{P}^{\eta}(A)$ is defined by
\begin{equation}
\label{eq:softmax-bandit}
    \pi_{\nu}(\mathrm{d}a) \propto \exp\left(\int_{\mathbb{R}^d}f(z,a)\nu(\mathrm{d}z)\right)\eta(\mathrm{d}a),
\end{equation}
where $\mathcal{P}^{\eta}(A)$ denotes the set of probability measures that are absolutely continuous with respect to the strictly positive finite reference measure $\eta$ on $A$. In formula \eqref{eq:softmax-bandit}, the function
\begin{equation*}
   \mathcal{P}_1(\mathbb{R}^d) \times A \ni (\nu, a) \mapsto \int_{\mathbb{R}^d}f(z,a)\nu(\mathrm{d}z)
\end{equation*}
represents a mean-field neural network with a bounded, continuous, non-constant activation function $f:\mathbb R^d \times A \to \mathbb R$, whereas $\nu \in \mathcal{P}_1(\mathbb{R}^d)$ denotes a measure representing the parameters of the network. For a detailed discussion about the mean-field approach to MDPs, see \cite{leahy}. Note that the normalization constant in \eqref{eq:softmax-bandit} renders the objective in \eqref{eq:bandit} non-convex. As a result, standard mean-field optimization results \cite{Nitanda2022ConvexAO,chizat2022meanfield,zhenjiefict,https://doi.org/10.48550/arxiv.2206.02774}, which rely on the convexity of $F$ do not guarantee convergence to a global minimizer.

In this work, we address \eqref{eq:mean-field-min-problem-opt} using the Best Response flow, a learning algorithm originally proposed in \cite{10.2307/2938230, RePEc:eee:jetheo:v:57:y:1992:i:2:p:343-362} for computing Nash equilibria in games with finite-dimensional strategy spaces (see also \cite{hofbauer_stability_1995,Hofbauer} for the analysis of its convergence in that setting). 
Only recently has it been introduced as an optimization method for minimizing entropy-regularized convex functionals over the space of probability measures \cite{zhenjiefict}, and for computing mixed Nash equilibria (MNEs) in two-player and multi-player zero-sum games with entropy-regularized convex-concave payoff functions \cite{lascu-entr,kim2024symmetric,lu2025convergencetimeaveragedmeanfield}. The Best Response flow is defined by
\begin{equation}
\label{eq: mean-field-br-opt}
    \mathrm{d}\nu_t = \alpha\left(\Psi_{\sigma}[\nu_t] - \nu_t\right)\mathrm{d}t, \quad \nu|_{t=0} \coloneqq \nu_0 \in \mathcal{P}_1(\mathbb R^d),
\end{equation}
where $\alpha > 0$ is the learning rate of the flow and $\Psi_{\sigma}$ is the Best Response operator.

\begin{definition}[Best Response operator]
\label{def:proximal-gibbs-opt}
For any $\nu \in \mathcal{P}_1(\mathbb R^d)$ and any $\sigma > 0,$ define the Best Response operator $\Psi_{\sigma}:\mathcal{P}_1(\mathbb R^d) \to \mathcal{P}_1^{\xi}(\mathbb R^d)$ by
\begin{equation}
\label{eq:proximal-gibbs-psi-opt}
\Psi_{\sigma}[\nu](\mathrm{d}x) \propto \exp\left(-\frac{1}{\sigma} \frac{\delta F}{\delta \nu}(\nu, x)\right)\xi(\mathrm{d}x).
\end{equation}
\end{definition}
Here $\frac{\delta F}{\delta \nu}(\nu,\cdot)$ represents the flat derivative of $F$ at $\nu \in \mathcal{P}_1(\mathbb R^d)$ (cf. Definition \ref{def:flat-derivative}), whereas $\mathcal{P}_1^{\xi}(\mathbb R^d)$ is the space of probability measures on $\mathbb{R}^d$ that are absolutely continuous with respect to $\xi$ and have finite first moment. The Best Response operator \eqref{eq:proximal-gibbs-psi-opt} was introduced as part of the definition of the flow \eqref{eq: mean-field-br-opt} in \cite{zhenjiefict}, but it appeared also in \cite{Nitanda2022ConvexAO,chizat2022meanfield} under the name \textit{proximal operator} as a tool for studying convergence of mean-field Langevin dynamics. 
\begin{remark}
    We note that there exists a dynamic known as the fictitious play flow (originally proposed in \cite{brown:fp1951} for finite-dimensional games), which is closely related to the flow \eqref{eq: mean-field-br-opt}, and which can be interpreted as the best response computed with respect to the historical average of strategies. We refer to \cite[Remark 2.6]{lascu-entr} for a detailed comparison between the two flows in the context of mean-field min-max problems. 
\end{remark}
Following \cite[Definition 3]{zhenjiefict}, we observe that for a given $\nu\in \mathcal{P}_1(\mathbb R^d)$ and $\sigma > 0,$ the map $\Psi_{\sigma}$ satisfies the following variational representation: 
\begin{equation*}
\label{eq:psi-linear-F}
    \Psi_{\sigma}[\nu] = \argmin_{\nu' \in \mathcal{P}_1^{\xi}(\mathbb R^d)} \left\{\int_{\mathbb R^d} \frac{\delta F}{\delta \nu}(\nu, x)(\nu'-\nu)(\mathrm{d}x) + \sigma\operatorname{KL}(\nu'|\xi)\right\}.
\end{equation*}
This is a direct consequence of the first-order condition characterizing the minimizers of mean-field optimization problems (see \cite[Proposition 2.5]{10.1214/20-AIHP1140} and the discussion in \cite[Section 3]{zhenjiefict}). Hence $\Psi_{\sigma}$ is the unique minimizer (i.e., \textit{Best Response}) of entropy-regularized linearization of $F$. As Proposition \ref{prop:optimality} will demonstrate, every minimizer $\nu_{\sigma}^*$ of $F^{\sigma}$ must satisfy the fixed-point equation $\nu = \Psi_{\sigma}[\nu].$

Our results in the MDP setting of Section \ref{sec:mdp} are closely related to \cite{leahy}, which considers the same MDP under the assumption that $\xi \propto e^{-U^{\xi}}$, where $U^{\xi}:\mathbb R^d \to \mathbb R$ is a measurable function, and utilizes the Wasserstein gradient flow to solve \eqref{eq:mean-field-min-problem-opt}. In the present paper we will demonstrate that, compared to the results in \cite{leahy}, one can obtain exponential convergence to the minimizer for the Best Response flow, under weaker assumptions than the ones required in \cite{leahy} for the Wasserstein flow. The comparison between our results and \cite{leahy} will be discussed in detail in Section \ref{sec:comparison_Wasserstein_BR}.

\subsection{Non-convex-non-concave min-max problems}
We extend our approach from the single-agent optimization setting to the two-player min-max setting by studying an entropy-regularized, non-convex–non-concave, zero-sum game: \begin{equation} 
\label{eq:mean-field-min-problem} \min_{\nu \in \mathcal{P}_1(\mathbb R^d)} \max_{\mu \in \mathcal{P}_1(\mathbb R^d)} F^{\sigma_\nu,\sigma_\mu}(\nu, \mu), \text{ where } F^{\sigma_\nu,\sigma_\mu}(\nu, \mu) \coloneqq F(\nu, \mu) + \sigma_\nu \operatorname{KL}(\nu|\xi) - \sigma_\mu \operatorname{KL}(\mu|\rho), \end{equation} with
a function $F: \mathcal{P}_1(\mathbb R^d) \times \mathcal{P}_1(\mathbb R^d) \to \mathbb{R}$,
reference measures $\xi$, $\rho \in \mathcal{P}_1(\mathbb{R}^d)$ and regularization parameters $\sigma_\nu$, $\sigma_\mu > 0$. A solution $(\nu_{\sigma_\nu}^*, \mu_{\sigma_\mu}^*)$ to \eqref{eq:mean-field-min-problem} is referred to as a mixed Nash equilibrium (MNE) and can be characterized by the condition
\begin{equation*}
    \nu_{\sigma_\nu}^* \in \argmin_{\nu \in \mathcal{P}_1(\mathbb R^d)} F^{\sigma_\nu,\sigma_\mu}(\nu,\mu_{\sigma_\mu}^*),\quad \mu_{\sigma_\mu}^* \in \argmax_{\mu \in \mathcal{P}_1(\mathbb R^d)} F^{\sigma_\nu,\sigma_\mu}(\nu_{\sigma_\nu}^*,\mu).
\end{equation*}
Problem \eqref{eq:mean-field-min-problem} arises in a variety of machine learning contexts, including the training of Generative Adversarial Networks (GANs) \citep{NEURIPS2020_e97c864e, wang2023exponentially, yulong, lascu2024a, an2025convergencetwotimescalegradientdescent, cai2025convergenceminmaxlangevindynamics, lascu-entr}, adversarial robustness \cite{trillos2023adversarial} or Multi-Agent Reinforcement Learning (MARL) \cite{JMLR:v24:20-1131, dask10.5555/3495724.3496188, DBLP:conf/iclr/CenCDX23, kim2024symmetric}, and has been the subject of extensive theoretical investigation. In particular, several works considered problem \eqref{eq:mean-field-min-problem} under the assumption that $F$ is either bilinear or convex-concave, employing various gradient flow dynamics, including the Best Response/fictitious play flow \cite{lascu-entr,kim2024symmetric}, the Wasserstein gradient flow \cite{yulong,wang2023exponentially,NEURIPS2020_e97c864e,an2025convergencetwotimescalegradientdescent,cai2025convergenceminmaxlangevindynamics}, and the Fisher–Rao gradient flow \cite{lascu2024a}. Following the single-agent optimization setting presented above, we consider the Best Response flow for \eqref{eq:mean-field-min-problem} and demonstrate that the convexity-concavity of $F$ is not needed for convergence. Specifically, we show that convergence can still be guaranteed in the non-convex-non-concave setting, provided that appropriately chosen regularizers are employed. This relationship is made precise through a formula that connects the degree of non-convexity-non-concavity of $F,$ the regularization parameters $\sigma_\nu$ and $\sigma_\mu,$ the first moments of the reference measures $\xi$ and $\rho$ and, in comparison to the single-agent optimization case, additionally the learning rates of the players' flows $\alpha_\nu$ and $\alpha_\mu.$ From this standpoint, our results can be interpreted as an extension of the analysis in \cite{lascu-entr} to the case of non-convex-non-concave objective functions $F$ with distinct regularization parameters and learning rates for each player.

Another key motivation for investigating the non-convex–non-concave min–max problem stems from applications in Multi-Agent Reinforcement Learning (MARL), particularly in the setting of policy optimization for two-player zero-sum Markov games \cite{DBLP:conf/iclr/CenCDX23,kim2024symmetric}. Specifically, we extend the examples presented in \cite{kim2024symmetric}, which cover the convex–concave setting of a two-player zero-sum Markov game without policy parametrization, to the more general case of non-convex–non-concave objective functions with softmax parametrized policies, analogous to those encountered in the single-agent optimization framework presented above. A detailed comparison between our results and those of \cite{kim2024symmetric} is provided in Section \ref{sec:markov}. In the simplified setting of a two-player bandit problem, we take 
\begin{equation}
\label{eq:static-Markov}
F(\nu, \mu) = \int_{A} \int_A c(a,b) \pi_{\nu}(\mathrm{d}a)\pi_{\mu}(\mathrm{d}b)
\end{equation}
for a cost function $c:A \times A\to\mathbb R,$ where $A$ is the players' action space. In \eqref{eq:static-Markov}, for any measures $\nu$, $\mu \in \mathcal{P}_1(\mathbb{R}^d)$ the policies $\pi_{\nu}$, $\pi_{\mu} \in \mathcal{P}^{\eta}(A)$ are given by \eqref{eq:softmax-bandit} for $\pi_{\nu}$ and by an analogous formula for $\pi_{\mu}$. Similarly to the single-agent bandit problem, the objective function $F$ is non-convex-non-concave due to the parametrization. As a result, existing mean-field optimization results for computing mixed Nash equilibria (MNEs) in min–max games with convex–concave payoff functions \cite{yulong,cai2025convergenceminmaxlangevindynamics,an2025convergencetwotimescalegradientdescent,lascu-entr,kim2024symmetric,lascu2024a} do not, in general, guarantee convergence to a global MNE.

Following the optimization setting, we tackle \eqref{eq:mean-field-min-problem} by utilizing the Best Response flow, which is defined as
\begin{equation} 
\label{eq: mean-field-br} 
\begin{cases} \mathrm{d}\nu_t = \alpha_\nu\left(\Psi_{\sigma_\nu}(\nu_t, \mu_t) - \nu_t\right)\mathrm{d}t,\\ 
\mathrm{d}\mu_t = \alpha_\mu\left(\Phi_{\sigma_\mu}(\nu_t, \mu_t) - \mu_t\right)\mathrm{d}t, \quad (\nu,\mu)|_{t=0} = (\nu_0,\mu_0) \in \mathcal{P}_1(\mathbb R^d) \times \mathcal{P}_1(\mathbb R^d), \end{cases} 
\end{equation} 
where $\alpha_\nu,\alpha_\mu > 0$ are the learning rates of each player and $\Psi_{\sigma_\nu},\Phi_{\sigma_\mu}$ are the Best Response operators defined in Section \ref{sec:proofs-min-max} analogously to Definition \ref{def:proximal-gibbs-opt}. As Proposition \ref{prop: 2.5} will show, any MNE $(\nu_{\sigma_\nu}^*, \mu_{\sigma_\mu}^*)$ of $F^{\sigma_\nu,\sigma_\mu}$ must satisfy the coupled fixed-point equations $\nu = \Psi_{\sigma_\nu}[\nu, \mu]$ and $\mu = \Phi_{\sigma_\mu}[\nu, \mu].$

As showed in \cite[Corollary 1]{lascu-entr}, when $\alpha_\nu = \alpha_\mu = \alpha$, flow \eqref{eq: mean-field-br} converges exponentially at rate $\mathcal{O}(e^{-\alpha t})$ in KL divergence and $\operatorname{TV}^2$ distance to the unique MNE of \eqref{eq:mean-field-min-problem}, under the assumption that $F$ is convex-concave. A key observation is that the convergence rate depends solely on the learning rate 
$\alpha$ and is independent of the regularization parameter $\sigma = \sigma_\nu = \sigma_\mu > 0.$ Our results extend those of \cite{lascu-entr} by removing the convexity-concavity assumption on $F,$ the requirement of uniform bounds on the second-order flat derivatives of $F,$ and allowing for different regularization parameters and learning rates for each player. A detailed discussion about the differences between the single-agent optimization and the min-max settings can be found in Remark \ref{remark:optimization_vs_games}.

\subsection{Our contribution} Our results extend those of \cite{zhenjiefict, leahy} in the single-agent optimization setting, and \cite{lascu-entr, kim2024symmetric} in the mix-max setting. We summarize our contribution as follows:
\begin{itemize}
    \item In the general setting of the problem \eqref{eq:mean-field-min-problem-opt}, we extend the analysis of \cite{zhenjiefict} by addressing the case where $F$ is non-convex. In contrast to \cite{zhenjiefict}, we show in Theorem \ref{thm:wass-contraction} that when $\sigma > 0$ is sufficiently large, the Best Response map $\Psi_{\sigma}:\mathcal{P}_1(\mathbb R^d) \to \mathcal{P}_1(\mathbb R^d)$ becomes a contraction in the $L^1$-Wasserstein metric, which guarantees the existence of a unique global minimizer for \eqref{eq:mean-field-min-problem-opt} (cf. Corollary \ref{cor:uniqueness-minimizer}). Moreover, in Theorem \ref{thm:stability-opt} we derive a stability estimate for the flow, with respect to both the initial condition and the regularization parameter $\sigma.$ As a consequence, in Theorem \ref{thm:conv-opt} we obtain exponential convergence of the Best Response flow to the unique minimizer in the $L^1$-Wasserstein distance --- an improvement over \cite[Theorem 9]{zhenjiefict}, which establishes convergence in the $L^1$-Wasserstein metric without an explicit rate. In the context of MDPs, compared to \cite{leahy}, we employ the Best Response flow instead of the Wasserstein gradient flow and achieve a similar convergence rate under less restrictive assumptions on $F$ and $U^{\xi}.$ Hence our result can be interpreted as providing an alternative method of choosing an optimal policy for MDPs with softmax parametrized policies in the mean field regime. We also briefly discuss the implementation of the Best Response flow in Section \ref{sec: numerical}, however, a precise analysis of the corresponding numerical methods still remains an open problem and is left for future work.

    \item We extend the results of \cite{lascu-entr} by removing the convexity-concavity assumption on $F,$ the requirement of uniformly bounded second-order flat derivatives and allowing for different regularization parameters $\sigma_\nu, \sigma_\mu$ and learning rates $\alpha_\nu,\alpha_\mu$ for each player. Specifically, in Theorem \ref{thm:wass-contraction-game}, we prove that, when $\sigma_\nu, \sigma_\mu$ are sufficiently large, the joint Best Response operator $\left(\Psi_{\sigma_\nu}, \Phi_{\sigma_\mu} \right):\mathcal{P}_1(\mathbb R^d) \times \mathcal{P}_1(\mathbb R^d) \to \mathcal{P}_1(\mathbb R^d) \times \mathcal{P}_1(\mathbb R^d)$ is a contraction with respect to the $L^1$-Wasserstein distance. Consequently, we show that the game \eqref{eq:mean-field-min-problem} admits a unique global MNE (cf. Corollary \ref{cor:uniqueness-MNE}). Additionally, in Theorem \ref{thm:conv}, we establish that under this strong regularization regime, the Best Response flow \eqref{eq: mean-field-br} converges exponentially in the $L^1$-Wasserstein metric to the equilibrium. Lastly, in Subsection \ref{sec:markov}, we illustrate how our theoretical results for min-max games apply to policy optimization in two-player zero-sum Markov games, thereby generalizing the examples in \cite[Section 5]{kim2024symmetric} from the convex-concave to the non-convex-non-concave setting with softmax parametrized policies.
\end{itemize}

\section{Assumptions and main results for single-agent optimization}
\label{sec:main}
In this section, we first introduce the necessary notations and assumptions used throughout the paper, followed by our main results.
\subsection{Notation and main results}
\label{subsec:notation}
By $\mathcal{P}_1(\mathbb R^d)$ we denote the space of probability measures with finite first moment. We equip $\mathcal{P}_1(\mathbb R^d)$ with the $L^1$-Wasserstein distance $\mathcal{W}_1.$ The set $\mathcal P_1(\mathbb R^d)$ endowed with the topology induced by the $\mathcal{W}_1$ distance is a complete separable metric space \cite[Theorem 6.18]{villani2008optimal}. Let $\mathcal{B}(\mathbb R^d)$ denote the Borel $\sigma$-algebra over $\mathbb R^d.$ For any measure $\nu$ on $\left(\mathbb R^d, \mathcal{B}(\mathbb R^d)\right)$, the set of absolutely continuous measures in $\mathcal{P}_1(\mathbb R^d)$ with respect to $\nu$ is denoted by $\mathcal{P}_1^{\nu}(\mathbb R^d)$. In particular, we will use this notation for $\nu = \lambda$, where $\lambda$ is the $d$-dimensional Lebesgue measure. The Kantorovich--Rubinstein duality theorem \cite[Theorem 5.10]{villani2008optimal} implies that for all $\nu,\nu'\in \mathcal{P}_1(\mathbb R^d),$
\begin{equation}\label{eq:KRW1}
	\mathcal{W}_1(\nu,\nu')=\sup_{\phi\in \textnormal{Lip}_1(\mathbb R^d)} \int_{\mathbb{R}^d} \phi(x)(\nu-\nu')(\mathrm{d}x),
\end{equation} 
where $\textnormal{Lip}_1(\mathbb R^d)$ denotes the space of all functions $\phi:\mathbb R^d\to \mathbb R$ with Lipschitz constant equal to $1.$ For $\xi \in \mathcal{P}_1(\mathbb R^d),$ the relative entropy $\operatorname{KL}(\cdot | \xi):\mathcal{P}_1(\mathbb R^d) \to [0, \infty]$ with respect to $\xi$ is given for any $\nu \in \mathcal{P}_1(\mathbb R^d)$ by
\begin{equation*}
\operatorname{KL}(\nu|\xi) \coloneqq \begin{cases}
\int_{\mathbb R^d} \log\frac{\mathrm{d}\nu}{\mathrm{d}\xi}(x) \nu(\mathrm{d}x), & \text{ if } \nu \in \mathcal{P}_1^{\xi}(\mathbb R^d),\\
+\infty, & \text{ otherwise.}
\end{cases}
\end{equation*}
and analogously for $\operatorname{KL}(\cdot | \rho).$
\begin{assumption}[Lipschitzness and boundedness of the flat derivative]
\label{ass:lip-flat}
Assume that $F \in \mathcal{C}^1$ (cf. Definition \ref{def:flat-derivative}) and there exist $C_F, L_F > 0$ such that for all $\nu, \nu' \in \mathcal{P}_1(\mathbb R^d)$ and all $x,x' \in \mathbb R^d,$ we have
\begin{equation*}
\left| \frac{\delta F}{\delta \nu} (\nu, x) \right| \leq C_F, \quad \left| \frac{\delta F}{\delta \nu}(\nu', x') - \frac{\delta F}{\delta \nu}(\nu, x)\right| \leq L_F\left(|x'-x| + \mathcal{W}_1(\nu', \nu)\right).
\end{equation*}
\end{assumption}
\begin{assumption}
\label{ass:abs-cty-pi}
Assume that $\xi(\mathrm{d}x) \coloneqq e^{-U^{\xi}(x)}\mathrm{d}x$\footnote[1]{For simplicity, we assume that $Z_{\xi} \coloneqq\int_{\mathbb R^d} e^{-U^{\xi}(x)} \mathrm{d}x = 1$ since we adopt the convention that the potential function $U^{\xi}$ is shifted by $\log Z_{\xi}.$} for a measurable function $U^{\xi}:\mathbb R^d \to \mathbb R$ such that $U^{\xi}$ is bounded below and has at least linear growth, i.e.,
\begin{equation*}
\label{eq:regularity-U} 
\essinf_{x \in \mathbb R^d} U^{\xi}(x) > -\infty, \quad \liminf_{x \to \infty} \frac{U^{\xi}(x)}{|x|} > 0.
\end{equation*}
\end{assumption}
We are ready to show that, under Assumptions \ref{ass:lip-flat} and \ref{ass:abs-cty-pi}, the Best Response map $\Psi_{\sigma}:\mathcal{P}_1(\mathbb R^d) \to \mathcal{P}_1(\mathbb R^d)$ is $\mathcal{W}_1$-Lipschitz for any $\sigma > 0,$ and becomes a contraction for sufficiently large $\sigma.$ 
\begin{theorem}[$L^1$-Wasserstein contraction]
\label{thm:wass-contraction}
Let Assumptions \ref{ass:lip-flat}, \ref{ass:abs-cty-pi} hold. For any $\sigma > 0$, let
\begin{equation*}
    L_{\Psi_{\sigma},U^{\xi}} \coloneqq \frac{L_F}{\sigma}\exp\left(\frac{2C_F}{\sigma}\right)\left(1+\exp\left(\frac{2C_F}{\sigma}\right)\right)\int_{\mathbb R^d} |x|e^{-U^{\xi}(x)}\mathrm{d}x > 0.
\end{equation*}
Then
\begin{equation*}
    \mathcal{W}_1\left(\Psi_{\sigma}[\nu], \Psi_{\sigma}[\nu']\right) \leq L_{\Psi_{\sigma},U^{\xi}}\mathcal{W}_1(\nu,\nu'),
\end{equation*}
for any $\nu, \nu' \in \mathcal{P}_1(\mathbb R^d).$ If
\begin{equation}\label{eq:contractivity_condition}
    \sigma > 2C_F + e(e+1)L_F\int_{\mathbb R^d} |x|e^{-U^{\xi}(x)}\mathrm{d}x,
\end{equation}
then $L_{\Psi_{\sigma},U^{\xi}} \in (0,1),$ hence $\Psi_{\sigma}:\mathcal{P}_1(\mathbb R^d) \to \mathcal{P}_1(\mathbb R^d)$ is an $L^1$-Wasserstein contraction.
\end{theorem}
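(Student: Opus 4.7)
The plan is to establish the $\mathcal{W}_1$-Lipschitz estimate via Kantorovich--Rubinstein duality \eqref{eq:KRW1}, reducing the problem to controlling
\[
\int_{\mathbb{R}^d} \phi(x)\left(\Psi_{\sigma}[\nu]-\Psi_{\sigma}[\nu']\right)(\mathrm{d}x)
\]
uniformly over $\phi\in\textnormal{Lip}_1(\mathbb R^d)$. Since adding a constant to $\phi$ does not alter this integral, I would assume $\phi(0)=0$, so that $|\phi(x)|\le |x|$ for all $x$. Setting $f_\nu(x)\coloneqq -\tfrac{1}{\sigma}\tfrac{\delta F}{\delta\nu}(\nu,x)$ and $Z_\nu\coloneqq \int_{\mathbb R^d} e^{f_\nu(x)}\xi(\mathrm{d}x)$, the boundedness part of Assumption \ref{ass:lip-flat} gives $\|f_\nu\|_\infty\le C_F/\sigma$, and hence the two-sided bounds
\[
e^{-C_F/\sigma}\le Z_\nu\le e^{C_F/\sigma}, \qquad e^{f_\nu(x)}\le e^{C_F/\sigma}.
\]
The Lipschitzness part of Assumption \ref{ass:lip-flat}, applied pointwise in $x$, yields $|f_\nu(x)-f_{\nu'}(x)|\le \tfrac{L_F}{\sigma}\mathcal{W}_1(\nu,\nu')$, whence by the mean value theorem
\[
|e^{f_\nu(x)}-e^{f_{\nu'}(x)}|\le e^{C_F/\sigma}\,\tfrac{L_F}{\sigma}\,\mathcal{W}_1(\nu,\nu')
\qquad\text{and}\qquad
|Z_\nu - Z_{\nu'}|\le e^{C_F/\sigma}\,\tfrac{L_F}{\sigma}\,\mathcal{W}_1(\nu,\nu').
\]

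Next, I would decompose the difference of densities in the standard add-and-subtract fashion,
\[
\frac{e^{f_\nu(x)}}{Z_\nu}-\frac{e^{f_{\nu'}(x)}}{Z_{\nu'}}
\;=\;\frac{e^{f_\nu(x)}-e^{f_{\nu'}(x)}}{Z_\nu}
\;+\;\frac{e^{f_{\nu'}(x)}\,(Z_{\nu'}-Z_\nu)}{Z_\nu Z_{\nu'}},
\]
integrate against $\phi(x)\xi(\mathrm{d}x)$, and estimate each piece using the bounds above together with $|\phi(x)|\le|x|$. The first piece is bounded by $e^{2C_F/\sigma}\tfrac{L_F}{\sigma}\mathcal{W}_1(\nu,\nu')\int|x|e^{-U^\xi(x)}\mathrm{d}x$, while the second is bounded by $e^{4C_F/\sigma}\tfrac{L_F}{\sigma}\mathcal{W}_1(\nu,\nu')\int|x|e^{-U^\xi(x)}\mathrm{d}x$ because $1/(Z_\nu Z_{\nu'})\le e^{2C_F/\sigma}$. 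Summing, taking the supremum over $\phi$, and factoring gives exactly
\[
\mathcal{W}_1(\Psi_\sigma[\nu],\Psi_\sigma[\nu'])\le \tfrac{L_F}{\sigma}\,e^{2C_F/\sigma}\bigl(1+e^{2C_F/\sigma}\bigr)\int_{\mathbb R^d}|x|e^{-U^\xi(x)}\mathrm{d}x\cdot \mathcal{W}_1(\nu,\nu'),
\]
which is the claimed Lipschitz constant $L_{\Psi_\sigma,U^\xi}$. The finiteness of $\int|x|e^{-U^\xi(x)}\mathrm{d}x$ is secured by the at-least-linear growth condition in Assumption \ref{ass:abs-cty-pi}, via a standard tail estimate.

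For the contraction statement, assuming $\sigma>2C_F+e(e+1)L_F\int|x|e^{-U^\xi(x)}\mathrm{d}x$, I first note that $\sigma>2C_F$ implies $e^{2C_F/\sigma}<e$, so
\[
e^{2C_F/\sigma}\bigl(1+e^{2C_F/\sigma}\bigr)<e(1+e)=e(e+1),
\]
and the additional inequality $\sigma>e(e+1)L_F\int|x|e^{-U^\xi(x)}\mathrm{d}x$ (also implied by the assumed bound) then gives $L_{\Psi_\sigma,U^\xi}<1$. The main technical care lies in tracking the normalization-induced exponential factors in step two and in ensuring that the duality reduction is legitimate, i.e., that $\Psi_\sigma[\nu]\in\mathcal{P}_1(\mathbb R^d)$ so that $\mathcal{W}_1(\Psi_\sigma[\nu],\Psi_\sigma[\nu'])$ is well-defined; this follows because the density $e^{f_\nu}/Z_\nu$ is bounded by $e^{2C_F/\sigma}$ and $\xi$ has finite first moment under Assumption \ref{ass:abs-cty-pi}.
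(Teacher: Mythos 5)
Your proof is correct and follows essentially the same approach as the paper: the same add-and-subtract decomposition of the normalized densities, the same exponential estimates from Assumption~\ref{ass:lip-flat}, and the same passage from a sup-norm bound on the density ratio to a $\mathcal{W}_1$ bound, which the paper delegates to \cite[Lemma 16]{zhenjiefict} (with $p=1$) and you rederive inline via Kantorovich--Rubinstein duality after normalizing $\phi(0)=0$ so that $|\phi(x)|\le|x|$. The resulting Lipschitz constant is identical, and your explicit check that $\sigma>2C_F$ forces $e^{2C_F/\sigma}(1+e^{2C_F/\sigma})<e(e+1)$, yielding $L_{\Psi_\sigma,U^\xi}<1$ under \eqref{eq:contractivity_condition}, simply fills in what the paper leaves implicit.
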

Note that \eqref{eq:contractivity_condition} is a condition that relates the degree of non-convexity of $F$, the regularization parameter and the tail behaviour of the reference measure. Indeed, the convexity of $F$ can be expressed in terms of $\frac{\delta F}{\delta \nu}$ as
\begin{equation*}
    F(\nu') - F(\nu) \geq \int_{\mathbb R^d}\frac{\delta F}{\delta \nu}(\nu,x)(\nu'-\nu)(\mathrm{d}x),
\end{equation*}
for any $\nu', \nu \in \mathcal{P}_1(\mathbb R^d).$ Hence if $F$ can be represented as $F_1 + F_2,$ where $F_1$ is convex and $F_2$ is non-convex, then the upper bound on $\left| \frac{\delta F_2}{\delta \nu} \right|$ will measure the degree of non-convexity of the problem. Moreover, the quantity $\int_{\mathbb R^d} |x|e^{-U^{\xi}(x)}\mathrm{d}x$ is the first moment of the reference measure $\xi$. Evidently from \eqref{eq:contractivity_condition}, making this quantity smaller helps to achieve contractivity of $\Psi_{\sigma}$ and this corresponds to choosing $\xi$ with lighter tails (or equivalently, $U^{\xi}$ with higher growth). 

Note that the lower bound on $\sigma$ in condition \eqref{eq:contractivity_condition}  does not directly imply that the functional $F^{\sigma}$ in \eqref{eq:mean-field-min-problem-opt} is strongly convex, see Remark \ref{rmk:contractivity-not-imply-strong-convex} for details. Moreover, as shown in Remark \ref{rmk:contractivity-not-imply-strong-convex}, enforcing strong convexity of $F^{\sigma}$ would require a lower bound on $\sigma$ that is independent of the choice of $\xi$. In contrast, a key insight of our work is that the proof of $\mathcal{W}_1$-contractivity of the Best Response operator allows us to identify a link between $\sigma$ and the reference measure $\xi,$ given by \eqref{eq:contractivity_condition}, which ensures convergence of the Best Response flow to the global minimizer of \eqref{eq:mean-field-min-problem-opt}. This connection enables the use of smaller values of $\sigma$ when $\xi$ is chosen appropriately.

Having proven that $\Psi_{\sigma}$ is an $L^1$-Wasserstein contraction in the high-regularization regime, we can conclude that it has a unique fixed point. Combining this with the fact that any minimizer of \eqref{eq:mean-field-min-problem-opt} has to be a fixed point of $\Psi_{\sigma}$, we arrive at the following corollary.
\begin{corollary}[Existence and uniqueness of the minimizer]
\label{cor:uniqueness-minimizer}
    Let Assumptions \ref{ass:lip-flat}, \ref{ass:abs-cty-pi} hold. If \eqref{eq:contractivity_condition} holds, then \eqref{eq:mean-field-min-problem-opt} has a unique global minimizer.
\end{corollary}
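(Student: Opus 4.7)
The plan is to combine Banach's fixed-point theorem with a direct-method existence argument, invoking the first-order optimality characterization referenced as Proposition \ref{prop:optimality}. First, under \eqref{eq:contractivity_condition}, Theorem \ref{thm:wass-contraction} guarantees that $\Psi_\sigma:\mathcal{P}_1(\mathbb R^d) \to \mathcal{P}_1(\mathbb R^d)$ is a strict $\mathcal{W}_1$-contraction. Since $(\mathcal{P}_1(\mathbb R^d), \mathcal{W}_1)$ is a complete metric space (as recalled in Section \ref{subsec:notation}), Banach's fixed-point theorem produces a unique $\nu_\sigma^* \in \mathcal{P}_1(\mathbb R^d)$ satisfying $\Psi_\sigma[\nu_\sigma^*] = \nu_\sigma^*$. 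By Proposition \ref{prop:optimality}, which encodes a first-order necessary condition valid regardless of convexity, any global minimizer $\nu$ of $F^\sigma$ must satisfy $\nu = \Psi_\sigma[\nu]$, and hence at most one minimizer can exist, namely $\nu_\sigma^*$.

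To establish existence, I would apply the direct method to $F^\sigma$. Take a minimizing sequence $(\nu_n) \subset \mathcal{P}_1(\mathbb R^d)$. Since $F$ is bounded below by hypothesis and $F^\sigma(\nu_n)$ is bounded above, $\operatorname{KL}(\nu_n|\xi)$ is uniformly bounded by some constant $M$. The at-least-linear growth of $U^\xi$ in Assumption \ref{ass:abs-cty-pi} gives $\int_{\mathbb R^d} e^{c|x|} \xi(\mathrm{d}x) < \infty$ for some $c > 0$, and the Donsker--Varadhan variational formula
\[
\int_{\mathbb R^d} c|x|\, \nu_n(\mathrm{d}x) \leq \operatorname{KL}(\nu_n|\xi) + \log \int_{\mathbb R^d} e^{c|x|}\xi(\mathrm{d}x)
\]
(and analogously with $c|x|$ replaced by $c|x|\mathbbm{1}_{\{|x|>R\}}$) yields uniform integrability of $|x|$ under $(\nu_n)$, hence $\mathcal{W}_1$-relative compactness by Theorem 6.9 of \cite{villani2008optimal}. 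Along a convergent subsequence $\nu_{n_k} \to \nu^*$ in $\mathcal{W}_1$, the Lipschitz estimate on $\delta F/\delta \nu$ from Assumption \ref{ass:lip-flat} yields $\mathcal{W}_1$-continuity of $F$, and $\operatorname{KL}(\cdot|\xi)$ is $\mathcal{W}_1$-lower semicontinuous, so
\[
F^\sigma(\nu^*) \leq \liminf_{k\to\infty} F^\sigma(\nu_{n_k}) = \inf_{\nu \in \mathcal{P}_1(\mathbb R^d)} F^\sigma(\nu),
\]
proving that $\nu^*$ is a minimizer. Combined with the uniqueness argument above, necessarily $\nu^* = \nu_\sigma^*$, which completes the proof.

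The most delicate step is the tightness/coercivity part of the direct method, namely deducing $\mathcal{W}_1$-relative compactness of $\operatorname{KL}$-sublevel sets from the linear growth of $U^\xi$; this is where Assumption \ref{ass:abs-cty-pi} is genuinely used beyond merely making $\xi$ well-defined. The remainder of the argument is a clean composition of the Banach contraction principle of Step 1 with the first-order characterization supplied by Proposition \ref{prop:optimality}.
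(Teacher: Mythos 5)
Your uniqueness argument — Banach's fixed-point theorem for $\Psi_\sigma$ on the complete space $(\mathcal{P}_1(\mathbb{R}^d),\mathcal{W}_1)$ combined with Proposition \ref{prop:optimality} to identify every minimizer with the unique fixed point — matches the paper's proof exactly. The paper then simply declares that the unique fixed point \emph{is} the unique (global) minimizer, without arguing that the infimum of $F^\sigma$ is attained. You correctly noticed that existence deserves a separate argument and supplied a direct-method sketch; this goes beyond what the paper writes, and it is the right instinct.

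That extra step, however, has a gap precisely where you flagged it as delicate. Assumption \ref{ass:abs-cty-pi} only requires $U^\xi$ to grow at least linearly, so $\xi$ may have sub-exponential (e.g.\ Laplace) tails. Then the Donsker--Varadhan bound with $f=c|x|\mathbbm{1}_{\{|x|>R\}}$ gives $c\int_{|x|>R}|x|\,\nu_n(\mathrm{d}x)\le M+\log\bigl(\xi(B_R)+\int_{|x|>R}e^{c|x|}\xi(\mathrm{d}x)\bigr)$, and the right-hand side tends to $M>0$ rather than to $0$ as $R\to\infty$: you obtain only a uniform bound on tail first moments, not uniform integrability, so \cite[Theorem 6.9]{villani2008optimal} is not triggered. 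This is not just slack in the estimate: taking $\xi\propto e^{-|x|}$, $A_n=[R_n,R_n+1]$ with $R_n\to\infty$, $\delta_n=M/R_n$, and $\nu_n=(1-\delta_n)\xi+\delta_n\,\xi(\cdot\cap A_n)/\xi(A_n)$, one checks $\operatorname{KL}(\nu_n|\xi)\to M$ while $\int_{|x|>R}|x|\,\nu_n(\mathrm{d}x)\to M$ for every fixed $R$, so KL-sublevel sets are genuinely not $\mathcal{W}_1$-precompact in this regime. Retreating to weak convergence does not rescue the argument either, since the $\mathcal{W}_1$-Lipschitz bound on $\delta F/\delta\nu$ from Assumption \ref{ass:lip-flat} does not give weak continuity of $F$: in the fundamental-theorem-of-calculus identity the test function $\frac{\delta F}{\delta\nu}(\nu+\varepsilon(\nu_n-\nu),\cdot)$ varies with $n$, and the remainder is controlled only through $\mathcal{W}_1(\nu_n,\nu)$. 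So the existence half of your proposal does not close as written — and, notably, this point is not addressed in the paper's own proof either.
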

The proofs of Theorem \ref{thm:wass-contraction} and Corollary \ref{cor:uniqueness-minimizer} can be found in Section \ref{sec:proofs}. Before presenting the stability estimate for the flow, both with respect to the initial condition and the regularization parameter $\sigma,$ we note that Proposition \ref{thm:existence-opt} ensures the existence and uniqueness of a solution $(\nu_t)_{t \geq 0} \in C\left([0,\infty];\mathcal{P}_1(\mathbb R^d)\right)$ to the Best Response flow \eqref{eq: mean-field-br-opt}. 
\begin{theorem}[Stability of the flow with respect to $\sigma$ and $\nu_0$ in $\mathcal{W}_1$]
\label{thm:stability-opt}
    Let Assumptions \ref{ass:lip-flat}, \ref{ass:abs-cty-pi} hold. Let $(\nu_t)_{t \geq 0}, (\nu_t')_{t \geq 0}  \subset \mathcal{P}_{1}(\mathbb R^d)$ be the solutions of \eqref{eq: mean-field-br-opt} with parameters $\sigma,\sigma' > 0$ and initial conditions $\nu_0, \nu_0' \in \mathcal{P}_{1}^{\lambda}(\mathbb R^d),$ respectively. Let $\nu_{\sigma}^*, \nu_{\sigma'}^*$ be the unique minimizers of \eqref{eq:mean-field-min-problem-opt} with parameters $\sigma, \sigma',$ respectively. 
    Let 
    $$L_{\Psi,\sigma,\sigma',U^{\xi}} \coloneqq \frac{C_F}{\sigma \sigma'} \exp \left(C_F\left(\min\{\sigma,\sigma'\} + \frac{1}{\sigma'}\right)\right)\left(1+\exp\left(\frac{2C_F}{\sigma}\right)\right)\int_{\mathbb R^d} |x|e^{-U^{\xi}(x)}\mathrm{d}x > 0.$$ 
    If \eqref{eq:contractivity_condition} holds, then for all $t \geq 0$,
    \begin{equation*}
        \mathcal{W}_1(\nu_t, \nu_t') \leq e^{-\alpha t\left(1-L_{\Psi_{\sigma},U^{\xi}} \right)}\mathcal{W}_1(\nu_0, \nu_0') + |\sigma'-\sigma|\frac{L_{\Psi,\sigma,\sigma',U^{\xi}}}{1-L_{\Psi_{\sigma},U^{\xi}}}\left(1 - e^{-\alpha t\left(1-L_{\Psi_{\sigma},U^{\xi}}\right)}\right),
    \end{equation*}
    \begin{equation*}
    \mathcal{W}_1(\nu_{\sigma}^*, \nu_{\sigma'}^*) \leq |\sigma - \sigma'|\frac{L_{\Psi,\sigma,\sigma',U^{\xi}}}{1-L_{\Psi_{\sigma},U^{\xi}}}.
\end{equation*}
\end{theorem}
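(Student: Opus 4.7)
The plan is to combine two ingredients: the Lipschitz continuity of $\Psi_\sigma$ in its measure argument from Theorem~\ref{thm:wass-contraction}, and a new continuity estimate for $\Psi_\sigma$ in the regularization parameter $\sigma$; the stability bound is then obtained from a Gr\"onwall-type argument applied to the mild (Duhamel) form of the ODE~\eqref{eq: mean-field-br-opt}.

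First I would rewrite the flow in its integrated form
\[
\nu_t = e^{-\alpha t}\nu_0 + \alpha\int_0^t e^{-\alpha(t-s)}\Psi_\sigma[\nu_s]\,\mathrm{d}s,
\]
and analogously for $\nu_t'$ with $\sigma'$ and $\nu_0'$. Since $e^{-\alpha t}+\alpha\int_0^t e^{-\alpha(t-s)}\mathrm{d}s = 1$, the right-hand side is a genuine probability mixture. Testing the difference $\nu_t-\nu_t'$ against $\phi\in\textnormal{Lip}_1(\mathbb R^d)$ and taking a supremum via the Kantorovich--Rubinstein duality~\eqref{eq:KRW1} then gives
\[
\mathcal{W}_1(\nu_t,\nu_t') \leq e^{-\alpha t}\mathcal{W}_1(\nu_0,\nu_0') + \alpha\int_0^t e^{-\alpha(t-s)}\mathcal{W}_1\bigl(\Psi_\sigma[\nu_s],\Psi_{\sigma'}[\nu_s']\bigr)\mathrm{d}s.
\]
Inserting $\Psi_\sigma[\nu_s']$ as an intermediate point and applying the triangle inequality bounds the integrand by $L_{\Psi_\sigma,U^\xi}\mathcal{W}_1(\nu_s,\nu_s') + \mathcal{W}_1(\Psi_\sigma[\nu_s'],\Psi_{\sigma'}[\nu_s'])$, where the first piece uses Theorem~\ref{thm:wass-contraction}.

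The main obstacle will be the second piece, namely proving the stability of $\Psi$ in its parameter:
\[
\mathcal{W}_1\bigl(\Psi_\sigma[\nu],\Psi_{\sigma'}[\nu]\bigr)\leq L_{\Psi,\sigma,\sigma',U^\xi}\,|\sigma-\sigma'|\qquad\text{for all }\nu\in\mathcal{P}_1(\mathbb R^d).
\]
To obtain it, I would again invoke~\eqref{eq:KRW1} and, by subtracting the constant $\phi(0)$ (which does not affect the integral since both $\Psi_\sigma[\nu]$ and $\Psi_{\sigma'}[\nu]$ are probability measures), reduce to test functions satisfying $|\phi(x)|\leq|x|$. Setting $G(x)\coloneqq\tfrac{\delta F}{\delta\nu}(\nu,x)$, the integrand becomes the difference
\[
\frac{e^{-G(x)/\sigma}}{Z_\sigma(\nu)}-\frac{e^{-G(x)/\sigma'}}{Z_{\sigma'}(\nu)},
\]
with $Z_\sigma(\nu)\in[e^{-C_F/\sigma},e^{C_F/\sigma}]$ by the uniform bound $|G|\leq C_F$ of Assumption~\ref{ass:lip-flat}. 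Using the elementary inequality $|e^{-a}-e^{-b}|\leq|a-b|\,e^{-\min(a,b)}$ controls both the difference of numerators and the difference of normalizers by quantities of order $C_F|\sigma-\sigma'|/(\sigma\sigma')$ multiplied by an exponential factor; the surviving $|x|$ combines with $e^{-U^\xi(x)}\mathrm{d}x$ to produce the first moment $\int|x|e^{-U^\xi(x)}\mathrm{d}x$. Collecting the constants carefully should reproduce the stated $L_{\Psi,\sigma,\sigma',U^\xi}$.

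With both Lipschitz estimates at hand, the integral inequality reads
\[
\mathcal{W}_1(\nu_t,\nu_t') \leq e^{-\alpha t}\mathcal{W}_1(\nu_0,\nu_0') + \alpha L_{\Psi_\sigma,U^\xi}\!\!\int_0^t\! e^{-\alpha(t-s)}\mathcal{W}_1(\nu_s,\nu_s')\,\mathrm{d}s + L_{\Psi,\sigma,\sigma',U^\xi}|\sigma-\sigma'|\bigl(1-e^{-\alpha t}\bigr).
\]
Multiplying by $e^{\alpha t}$ and invoking Gr\"onwall's lemma yields the first claimed estimate with effective contraction rate $\alpha(1-L_{\Psi_\sigma,U^\xi})$, which is strictly positive precisely under~\eqref{eq:contractivity_condition}. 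For the second estimate, I would specialize to $\nu_0=\nu_\sigma^*$, $\nu_0'=\nu_{\sigma'}^*$: since each fixed point is stationary under its own flow, we have $\nu_t\equiv\nu_\sigma^*$ and $\nu_t'\equiv\nu_{\sigma'}^*$, so the first inequality reduces to
\[
\mathcal{W}_1(\nu_\sigma^*,\nu_{\sigma'}^*)\leq e^{-\alpha(1-L_{\Psi_\sigma,U^\xi})t}\mathcal{W}_1(\nu_\sigma^*,\nu_{\sigma'}^*)+|\sigma-\sigma'|\frac{L_{\Psi,\sigma,\sigma',U^\xi}}{1-L_{\Psi_\sigma,U^\xi}}\bigl(1-e^{-\alpha(1-L_{\Psi_\sigma,U^\xi})t}\bigr),
\]
and passing to the limit $t\to\infty$ delivers the fixed-point stability bound.
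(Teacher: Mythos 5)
Your proposal is correct and follows essentially the same route as the paper's proof: the Duhamel (mild) form of the flow tested against $\textnormal{Lip}_1$ functions via Kantorovich--Rubinstein duality, the triangle inequality through $\Psi_{\sigma}[\nu_s']$ splitting the error into the $\mathcal{W}_1$-Lipschitz part from Theorem \ref{thm:wass-contraction} and a parameter-stability part of order $C_F|\sigma-\sigma'|/(\sigma\sigma')$, Gr\"onwall's lemma applied to $t\mapsto e^{\alpha t}\mathcal{W}_1(\nu_t,\nu_t')$, and stationarity of the minimizers (as fixed points of $\Psi_{\sigma}$) to deduce the second bound. Your direct duality argument for $\mathcal{W}_1\left(\Psi_{\sigma}[\nu],\Psi_{\sigma'}[\nu]\right)\leq L_{\Psi,\sigma,\sigma',U^{\xi}}|\sigma-\sigma'|$ (centering the test function so $|\phi(x)|\leq|x|$ and using the pointwise density estimate) is exactly the content of the lemma from \cite{zhenjiefict} that the paper invokes, so the two proofs coincide in substance.
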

An immediate consequence of Theorem \ref{thm:stability-opt} is our main convergence result for single-agent optimization.
\begin{theorem}[Convergence of \eqref{eq: mean-field-br-opt} in the high-regularized regime]
\label{thm:conv-opt}
    Let Assumptions \ref{ass:lip-flat},\ref{ass:abs-cty-pi} hold. Let $(\nu_t)_{t \geq 0} \subset \mathcal{P}_{1}(\mathbb R^d)$ be the solution of \eqref{eq: mean-field-br-opt} with parameter $\sigma >0 $ and initial condition $\nu_0 \in \mathcal{P}_{1}^{\lambda}(\mathbb R^d).$ If \eqref{eq:contractivity_condition} holds,
    then for all $t \geq 0,$
    \begin{equation*}
        \mathcal{W}_1(\nu_t, \nu_{\sigma}^*) \leq e^{-\alpha t\left(1-L_{\Psi_{\sigma},U^{\xi}} \right) }\mathcal{W}_1(\nu_0, \nu_{\sigma}^*).
    \end{equation*}
\end{theorem}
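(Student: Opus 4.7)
The plan is to derive Theorem \ref{thm:conv-opt} as an immediate consequence of the stability estimate of Theorem \ref{thm:stability-opt}, by choosing the auxiliary flow to be the constant curve equal to the unique minimizer. Concretely, under the contractivity condition \eqref{eq:contractivity_condition}, Corollary \ref{cor:uniqueness-minimizer} gives a unique minimizer $\nu_\sigma^*$ of $F^\sigma$, which by Proposition \ref{prop:optimality} is a fixed point of $\Psi_\sigma$. Since $\Psi_\sigma[\nu_\sigma^*] = \nu_\sigma^*$, the constant curve $t \mapsto \nu_\sigma^*$ solves the Best Response flow \eqref{eq: mean-field-br-opt} with initial datum $\nu_0' \coloneqq \nu_\sigma^*$, because the right-hand side of the ODE vanishes identically along this curve. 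Uniqueness of the flow (Proposition \ref{thm:existence-opt}) then ensures this is the only such solution.

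Next I would verify that this constant curve is an admissible comparison flow for Theorem \ref{thm:stability-opt}, i.e. $\nu_\sigma^* \in \mathcal{P}_1^{\lambda}(\mathbb R^d)$. This is automatic: $\nu_\sigma^* = \Psi_\sigma[\nu_\sigma^*]$ has density
\[
\frac{\exp\!\left(-\tfrac{1}{\sigma}\tfrac{\delta F}{\delta \nu}(\nu_\sigma^*,x)\right)}{\int_{\mathbb R^d}\exp\!\left(-\tfrac{1}{\sigma}\tfrac{\delta F}{\delta \nu}(\nu_\sigma^*,x')\right)\xi(\mathrm{d}x')}\,e^{-U^{\xi}(x)}
\]
with respect to Lebesgue measure, which is well-defined and finite thanks to the boundedness of $\delta F/\delta \nu$ in Assumption \ref{ass:lip-flat} and the normalization of $\xi$. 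Membership in $\mathcal{P}_1$ follows from the same bound on $\delta F/\delta\nu$ together with the first-moment integrability of $\xi$ guaranteed by Assumption \ref{ass:abs-cty-pi}.

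With these observations in place, I would apply Theorem \ref{thm:stability-opt} to the pair $(\nu_t, \nu_t')$ with $\sigma' = \sigma$ and $\nu_t' \equiv \nu_\sigma^*$. Since $|\sigma - \sigma'| = 0$, the second summand in the stability estimate vanishes entirely, and the first summand collapses precisely to
\[
\mathcal{W}_1(\nu_t,\nu_\sigma^*) \leq e^{-\alpha t(1 - L_{\Psi_\sigma,U^{\xi}})}\,\mathcal{W}_1(\nu_0,\nu_\sigma^*),
\]
which is the claim.

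There is essentially no real obstacle here, as all the heavy lifting has already been done: the contraction estimate for $\Psi_\sigma$ in Theorem \ref{thm:wass-contraction}, the existence of $\nu_\sigma^*$ in Corollary \ref{cor:uniqueness-minimizer}, the fixed-point characterization in Proposition \ref{prop:optimality}, and the quantitative stability in Theorem \ref{thm:stability-opt}. The only mild point to address carefully is the admissibility of the constant comparison flow, which as indicated above is routine. If one preferred a self-contained argument bypassing Theorem \ref{thm:stability-opt}, an equivalent route would be to write the mild formulation $\nu_t = e^{-\alpha t}\nu_0 + \alpha\int_0^t e^{-\alpha(t-s)}\Psi_\sigma[\nu_s]\,\mathrm{d}s$ and its analogue for $\nu_\sigma^*$, use joint convexity of $\mathcal{W}_1$ under mixtures together with the contraction property of $\Psi_\sigma$, and close the resulting integral inequality via Grönwall applied to $t \mapsto e^{\alpha t}\mathcal{W}_1(\nu_t,\nu_\sigma^*)$; this recovers the same exponential rate $\alpha(1-L_{\Psi_\sigma,U^{\xi}})$.
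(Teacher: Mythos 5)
Your proposal is correct and matches the paper's own (implicit) argument: the paper states Theorem \ref{thm:conv-opt} as ``an immediate consequence of Theorem \ref{thm:stability-opt}'' without further detail, and the intended deduction is exactly what you wrote, namely applying the stability estimate with $\sigma' = \sigma$ and with the constant comparison curve $\nu_t' \equiv \nu_\sigma^*$, which solves \eqref{eq: mean-field-br-opt} because $\nu_\sigma^*$ is a fixed point of $\Psi_\sigma$. Your admissibility check that $\nu_\sigma^* \in \mathcal{P}_1^{\lambda}(\mathbb R^d)$ is a reasonable point to make explicit, and it is covered by Proposition \ref{prop:optimality} together with the first-moment bound from Assumptions \ref{ass:lip-flat} and \ref{ass:abs-cty-pi}.
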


We next show how our general results can be applied to the MDP setting of \cite{leahy}, followed by a detailed comparison with their work, and conclude Section \ref{sec:main} with a numerical scheme for the Best Response flow \eqref{eq: mean-field-br-opt} in the bandit case.

\subsection{Markov Decision Processes}
\label{sec:mdp}
We demonstrate how our results on single-agent optimization can be applied to infinite-horizon discounted MDPs studied in \cite{agazzi2021global,leahy}. We refer the reader to \cite{puterman,bertsekas,hernandez2012discrete} for a  thorough introduction to MDPs.

We consider an MDP defined by the seven-tuple $\mathcal{M}= \left(S,A,P,c,\delta,\tau,\eta\right)$ with Polish spaces $S$ and $A$ representing the state and the action space, respectively, a transition kernel $P:S\times A \to \mathcal{P}(S),$ a cost function $c \in B_b(S \times A),$ a discount factor $\delta \in [0,1),$ a regularization parameter $\tau > 0$ and a fixed reference measure $\eta \in \mathcal{M}_+(A).$ For a discussion on possible choices of $\eta,$ see Remark \ref{rmk:choice-of-eta}. Below we follow the kernel notation from \cite{leahy} (see also Section \ref{app: AppB}), i.e., we denote $P \in \mathcal{P}(S|S \times A)$, which means that $P(\cdot|s,a) \in \mathcal{P}(S)$ for any $(s,a) \in S \times A$, and similarly we consider policies $\pi \in \mathcal{P}(A|S)$, i.e, $\pi(\cdot|s) \in \mathcal{P}(A)$ for any $s \in S$. 

While prior works such as \cite{agazzi2021global,leahy} focus on maximizing expected rewards, we adopt a cost minimization perspective to remain consistent with the general optimization framework in \eqref{eq:mean-field-min-problem-opt}. In an MDP, at each time step, the agent observes a state $s \in S,$ and based on this state, it chooses an action $a \sim \pi(\cdot|s)$ according to the policy. Then the environment returns a cost $c(s,a)$ and transitions to a new state $s' \sim P(\cdot|s,a)$ according to the transition kernel. 

For a given policy $\pi \in \mathcal{P}^{\eta}(A|S)$ and $s \in S,$ define the $\tau$-entropy regularized expected cumulative cost by
\begin{equation*}
\label{V-cumulative}
    V^{\pi}_{\tau}(s) \coloneqq \mathbb E_s^{\pi}\left[\sum_{n=0}^\infty \delta^n \left(c(s_n,a_n) + \tau \log \frac{\mathrm{d}\pi}{\mathrm{d}\eta}(a_n|s_n)\right)\right] \in \mathbb R,
\end{equation*}
where $\mathbb E_s^{\pi}$ denotes the expectation over the state-action trajectory $(s_0,a_0,s_1,a_1,...)$ generated by policy $\pi$ and kernel $P$ such that $s_0 \coloneqq s,$ $a_n \sim \pi(\cdot|s_n)$ and $s_{n+1} \sim P(\cdot|s_n,a_n),$ for all $n \geq 0.$ For any measure $\gamma \in \mathcal{P}(S)$ and any policy $\pi \in \mathcal{P}(A|S),$ we rigorously define $\mathbb E_{\gamma}^\pi$ in Appendix \ref{app: AppB}.
For a given initial distribution $\gamma \in \mathcal{P}(S),$ the agent's goal is to solve
\begin{equation}
\label{eq:MDP}
    \min_{\pi \in \mathcal{P}^{\eta}(A|S)} V^{\pi}_\tau(\gamma), \text{ with } V^{\pi}_\tau(\gamma) \coloneqq \int_S V^{\pi}_{\tau}(s)\gamma(\mathrm{d}s).
\end{equation}
Due to $\tau$-strong-convexity of the value function $\pi(\cdot|s) \mapsto V^{\pi}_{\tau}(s),$ it follows from \cite[Theorem 2.1]{leahy} (see also \cite[Proposition 2.5]{10.1214/20-AIHP1140}) that \eqref{eq:MDP} admits a unique policy $\pi_{\tau}^* \in \mathcal{P}^{\eta}(A|S)$ independent of $\gamma$ such that
\begin{equation*}
    \pi_{\tau}^*(\mathrm{d}a|s) \propto \exp\left(-\frac{1}{\tau}Q_\tau^{\pi_{\tau}^*}(s,a)\right)\eta(\mathrm{d}a),
\end{equation*}
where for all $\pi \in \mathcal{P}^{\eta}(A|S),$ the state-value function is defined by 
\begin{equation}
\label{eq:Q}
    Q_\tau^\pi(s,a) = c(s,a) + \delta \int_S V_\tau^\pi(s') P(\mathrm{d}s'|s,a).
\end{equation}
To prevent the agent from having to search for the optimal policy $\pi_{\tau}^*(\cdot|s)$ at each state $s \in S$ over the entire space of probability measures $\mathcal{P}^{\eta}(A),$ the expression of $\pi_{\tau}^*$ suggests that it suffices to search for the minimizer of \eqref{eq:MDP} among the class of softmax policies $\left\{\pi_{\nu}(\cdot|s)|\nu \in \mathcal{P}_1(\mathbb R^d)\right\} \subset  \mathcal{P}^{\eta}(A),$ where for each $\nu \in \mathcal{P}_1(\mathbb R^d)$, the kernel $\pi_{\nu} \in  \mathcal{P}^{\eta}(A|S)$ is defined by
\begin{equation}
\label{eq:softmax}
    \pi_{\nu}(\mathrm{d}a|s) \propto \exp\left(f_{\nu}(s,a)\right)\eta(\mathrm{d}a)
\end{equation}
and
\begin{equation*}
    f_{\nu}(s,a) \coloneqq \int_{\mathbb{R}^d}f(\theta,s,a)\nu(\mathrm{d}\theta)
\end{equation*}
is a mean-field neural network with $f:\mathbb R^d \times S\times A \to \mathbb R$ denoting a bounded, differentiable, non-constant activation function and $\nu \in \mathcal{P}_1(\mathbb{R}^d)$ denoting a measure which represents the parameters of the network. Note that various activation functions such as softplus, tanh, sigmoid satisfy these assumptions.

Assuming that the policy $\pi_{\nu}(\cdot|s)$ is of the softmax form \eqref{eq:softmax} for all states $s \in S$, the agent's objective is now to learn the optimal distribution of the network's parameters which solve the problem
\begin{equation*}
\label{eq:MDP-2}
\min_{\nu \in \mathcal{P}_1(\mathbb{R}^d)}V_{\tau}^{\pi_{\nu}}(\gamma).
\end{equation*}
Due to the normalization constant in \eqref{eq:softmax}, the value function $\nu \mapsto V_{\tau}^{\pi_{\nu}}(\gamma)$ is non-convex (see also \cite{mei20b}). Therefore, one should not expect convergence of gradient flows to a global minimizer unless further entropic regularization at the level of the network's parameters is added, which leads us to consider our initial optimization problem \eqref{eq:mean-field-min-problem-opt} with $F(\nu) :=  V_{\tau}^{\pi_{\nu}}(\gamma)$ and an appropriately chosen regulariser. Note that, in contrast to the general MDP problem presented above, in the bandit problem the actions and the cost are state-independent and the general setting of \eqref{eq:MDP}-\eqref{eq:softmax} simplifies to the setting \eqref{eq:bandit}-\eqref{eq:softmax-bandit} presented in Section \ref{sec:Introduction}.

In Proposition \ref{prop:verification-prop} we will verify that the function $F(\nu) := V_{\tau}^{\pi_{\nu}}(\gamma)$ satisfies Assumption \ref{ass:lip-flat} and hence all our main results (Theorem \ref{thm:wass-contraction}, Corollary \ref{cor:uniqueness-minimizer}, Theorem \ref{thm:stability-opt}) and Theorem \ref{thm:conv-opt} apply to the minimization problems \eqref{eq:mean-field-min-problem-opt} corresponding to energy functions $F^{\sigma}(\nu) := V_{\tau}^{\pi_{\nu}}(\gamma) + \sigma \operatorname{KL}(\nu | \xi)$ for any reference measure $\xi$ satisfying Assumption \ref{ass:abs-cty-pi}. In particular, we will provide explicit expressions for the constants $C_F$ and $L_F$ that appear in Assumption \ref{ass:lip-flat}, in terms of the parameters of the MDP under consideration, and we will explain how to choose these parameters to ensure that the crucial condition \eqref{eq:contractivity_condition} is satisfied, cf.\ Remark \ref{rmk:choice-of-eta}.

\subsection{Comparison to the Wasserstein gradient flow}\label{sec:comparison_Wasserstein_BR}
Our work is closely related to \cite{leahy}, which also considers the MDP presented in Section \ref{sec:mdp}, 
and aims to minimize the function $\nu \mapsto V_{\tau}^{\pi_{\nu}}(\gamma) + \sigma \operatorname{KL}(\nu | \xi)$ with $\xi \propto e^{-U^{\xi}}$. 
In \cite[Theorem 2.12]{leahy}, it is shown that the Wasserstein gradient flow $(\nu_t)_{t \geq 0}$ defined by \begin{equation} \label{eq:wass-flow} \partial_t \nu_t = \nabla \cdot \left(\left(\nabla\frac{\delta F}{\delta \nu}(\nu_t, \cdot) + \sigma \nabla U^{\xi}\right)\nu_t\right) + \sigma \Delta\nu_t, \quad \nu|_{t=0} \coloneqq \nu_0 \in \mathcal{P}_2(\mathbb R^d), \end{equation} converges exponentially at rate $\mathcal{O}(e^{-\beta t})$ to the unique global minimizer of \eqref{eq:mean-field-min-problem-opt} in the $L^2$-Wasserstein distance, provided that $\sigma > 0$ is sufficiently large relative to the regularity constants of $F$ and $U^{\xi}$. 

More precisely, the convergence rate $\beta \coloneqq \sigma \kappa - C_2 - L$ is determined by the strong convexity constant $\kappa > 0$ of $U^{\xi},$ i.e., $$\left(\nabla U^{\xi}(x') - \nabla U^{\xi}(x) \right) \cdot \left(x'-x\right) \geq \kappa |x'-x|^2,$$ the uniform bound $C_2 > 0$ on the Hessian of the flat derivative of $F$ and the Lipschitz constant $L > 0$ of the map $\nu \mapsto \nabla \frac{\delta F}{\delta \nu}(\nu, \cdot)$ with respect to the $L^1$-Wasserstein distance, i.e., $$\left|\nabla^2 \frac{\delta F}{\delta \nu}(\nu, \cdot)\right| \leq C_2, \quad \left|\nabla\frac{\delta F}{\delta \nu}(\nu', \cdot) - \nabla \frac{\delta F}{\delta \nu}(\nu, \cdot)\right| \leq L \mathcal{W}_1(\nu',\nu).$$
In principle, choosing either $\sigma$ or $\kappa$ sufficiently large relative to $C_2$ and $L$ ensures that $\beta > 0.$ However, additional assumptions on $U^{\xi}$ required for the well-posedness of the flow \eqref{eq:wass-flow}, such as Lipschitzness and at most linear growth of $\nabla U^{\xi},$ impose an upper bound on $\kappa$ (see Assumptions 2.5, 2.8 in \cite{leahy}). Consequently, the quadratic potential $U^{\xi}(x) = |x|^2$ becomes essentially the only viable choice.

In contrast, under significantly weaker regularity assumptions on $F$ and $U^{\xi},$ we establish exponential convergence of the Best Response flow 
to the unique global minimizer of \eqref{eq:mean-field-min-problem-opt} in the $L^1$-Wasserstein distance, with rate $\mathcal{O}\Big(e^{-\alpha\left(1-L_{\Psi_{\sigma},U^{\xi}}\right)t}\Big),$
provided that $\sigma > 0$ is sufficiently large. In particular, taking $$\sigma > 2C_F + e(e+1)L_F\int_{\mathbb R^d} |x|e^{-U^{\xi}(x)}\mathrm{d}x,$$ where $C_F, L_F > 0$ are the constants in Assumption \ref{ass:lip-flat} ensures that $$L_{\Psi_{\sigma},U^{\xi}} \coloneqq \frac{L_F}{\sigma}\exp\left(\frac{2C_F}{\sigma}\right)\left(1+\exp\left(\frac{2C_F}{\sigma}\right)\right)\int_{\mathbb R^d} |x|e^{-U^{\xi}(x)}\mathrm{d}x < 1.$$
Furthermore, in comparison to \cite{leahy}, we only require $U^{\xi}$ to be bounded below and to have at least linear growth (cf. Assumption \ref{ass:abs-cty-pi}). Therefore, selecting a potential $U^{\xi}$ with sufficiently rapid growth can effectively reduce the lower bound required on $\sigma.$

Regarding the possible choice of $\alpha$, we would like to remark that, while in the continuous-time setting, a larger value of $\alpha > 0$ may accelerate the convergence rate of \eqref{eq: mean-field-br-opt}, it was shown in \cite[Theorem 7]{nitanda_fictitious}  that convergence of explicit Euler discretization of the flow is only guaranteed as long as the step size $\tau > 0$ satisfies $\alpha \tau \leq 1$, effectively imposing a restriction on the range of $\alpha$.

\subsection{Numerical simulation}\label{sec: numerical}
We now describe a numerical scheme for the Best Response flow \eqref{eq: mean-field-br-opt} in the bandit setting ($S=\emptyset$ in Section \ref{sec:mdp}). The idea is to update the parameter distribution $\nu$ of the mean-field neural network in \eqref{eq:softmax} using a two-loop Langevin particle algorithm. 

The algorithm takes as input the cost function $c,$ activation function $f$, potential $U^\xi$ and reference measure $\eta$ (as in Remark \ref{rmk:choice-of-eta}), together with the regularization parameter $\sigma$ prescribed by \eqref{eq:contractivity_condition} (with $C_F, L_F$ from Proposition \ref{prop:verification-prop}), the regularization $\tau > 0$, the initial distribution of parameters $\nu_0$, outer and inner step sizes $h_\text{out}, h_\text{in}$, horizons $T, K$, learning rate $\alpha > 0$ and number of parameters $N$. 

In the outer loop, we approximate $\nu_t$ by the empirical measure
\begin{equation*}
    \nu_t^N(\mathrm{d}\theta) = \frac{1}{N}\sum_{i=1}^N \delta_{\theta_t^i},
\end{equation*}
where $(\theta_t^i)_{i=1}^N$ are the network parameters. Since $\Psi_{\sigma}[\nu_t^N]$ has the exponential form \eqref{eq:proximal-gibbs-psi-opt}, we compute it via an inner-loop Langevin dynamics. For $K$ steps,
\begin{equation*} 
\theta_{t,k+1}^i = \theta_{t,k}^i - h_\text{in}\left(\nabla_\theta \frac{\delta {V_\tau}^{\pi_{\nu}}}{\delta \nu}(\nu^N_t,\theta^i_{t,k}) + \sigma \nabla_\theta U^{\xi}(\theta^i_{t,k})\right) +\sqrt{2h_\text{in}\sigma}\mathcal{N}_{t,k}^i, 
\end{equation*} 
for $i = 1,...,N,$ where $\big(\mathcal{N}_{t,k}^i\big)_{i=1}^N$ are i.i.d normally distributed random variables and $h_\text{in} > 0$ is the step size. Note that $\frac{\delta V_\tau^{\pi_\nu}}{\delta \nu}(\nu,\cdot)$ is explicitly computed in Lemma \ref{lem:func_deriv_J}. For $K$ large enough, we set
\begin{equation*} 
\Psi_{\sigma}[\nu^N_t] = \frac{1}{N}\sum_{i=1}^N \delta_ {\theta_{t,K}^i}.
\end{equation*}
The Best Response flow \eqref{eq: mean-field-br-opt} can be computed in the outer loop via the Euler step
\begin{equation*} 
\nu_{t+1}^N = (1-\alpha h_\text{out})\nu_{t}^N + \alpha h_\text{out}\Psi_{\sigma}[\nu^N_t],
\end{equation*} 
where $h_\text{out} > 0$ is the step size. The algorithm outputs $\nu_T^N$, which by Theorem \ref{thm:conv-opt} converges in $\mathcal{W}_1$ to $\nu_\sigma^\ast$, provided that $T,N$ are sufficiently large, yielding a near-optimal policy $\pi_{\nu_T^N}\approx \pi_\tau^\ast$. The description of the algorithm is summarized in Algorithm \ref{alg:br-bandit} in Appendix \ref{appendix: MDP}.

The continuous-time analysis omits a key feature of the algorithm. In continuous time, as expressed in condition \eqref{eq:contractivity_condition}, the parameter $\sigma$ is determined solely by the cost function $c,$ activation $f,$ potential $U^\xi,$ regularization $\tau$ and reference measure $\eta$ (cf.\ Proposition \ref{prop:verification-prop}). By contrast, in discrete time one must also account for the interaction of $\sigma$ with the step sizes $h_\text{in}$ and $h_\text{out}.$ A rigorous analysis of this discrete-time scheme lies beyond the scope of the present paper and will be studied in future work.

\section{Assumptions and main results for min-max problems}
\label{sec:main-minmax}
We now turn our attention to the class of min-max problems given by \eqref{eq:mean-field-min-problem}. We start with the necessary assumptions, and then formulate the main results. Let us first consider the metric space $(\mathcal{P}_1(\mathbb R^d) \times \mathcal{P}_1(\mathbb R^d),\widetilde{\mathcal{W}}_1)$ equipped with the $L^1$-Wasserstein distance
\begin{equation*}
\widetilde{\mathcal{W}}_1 \left( (\nu,\mu), (\nu',\mu')\right) \coloneqq \mathcal{W}_1(\nu, \nu') + \mathcal{W}_1(\mu, \mu'),
\end{equation*}
for any $(\nu,\mu), (\nu',\mu') \in \mathcal{P}_1(\mathbb R^d) \times \mathcal{P}_1(\mathbb R^d).$
\begin{assumption}[Lipschitzness and boundedness of the flat derivative]
\label{ass:lip-flat-game}
Assume that $F \in \mathcal{C}^1$ (cf. Definition \ref{def:flat-derivative}) and there exist $C_F, \bar{C}_F, L_F, \bar{L}_F > 0$ such that for all $\nu, \mu, \nu', \mu' \in \mathcal{P}_1(\mathbb R^d)$ and all $x,y,x',y' \in \mathbb R^d,$ we have
\begin{equation*}
\label{eq:lip-flat-game}
\left| \frac{\delta F}{\delta \nu} (\nu, \mu, x) \right| \leq C_F, \quad \left| \frac{\delta F}{\delta \nu}(\nu', \mu', x') - \frac{\delta F}{\delta \nu}(\nu, \mu, x)\right| \leq L_F\left(|x'-x| + \widetilde{\mathcal{W}}_1 \left( (\nu,\mu), (\nu',\mu')\right)\right),
\end{equation*}
\begin{equation*}
    \left| \frac{\delta F}{\delta \mu} (\nu, \mu, y) \right| \leq \bar{C}_F, \quad \left| \frac{\delta F}{\delta \mu}(\nu', \mu', y') - \frac{\delta F}{\delta \mu}(\nu, \mu, y)\right| \leq \bar{L}_F\left(|y'-y| + \widetilde{\mathcal{W}}_1 \left( (\nu,\mu), (\nu',\mu')\right)\right),
\end{equation*}
\end{assumption}
\begin{assumption}
\label{ass:abs-cty-pi-game}
Assume that $\xi(\mathrm{d}x) \coloneqq e^{-U^{\xi}(x)}\mathrm{d}x$ and $\rho(\mathrm{d}y) \coloneqq e^{-U^{\rho}(y)}\mathrm{d}y$ for measurable functions $U^{\xi}, U^{\rho}:\mathbb R^d \to \mathbb R$ such that $U^{\xi}, U^{\rho}$ are bounded below and have at least linear growth, i.e.,
\begin{equation*}
\essinf_{x \in \mathbb R^d} U^{\xi}(x) > -\infty, \essinf_{y \in \mathbb R^d} U^{\rho}(y) > -\infty, \liminf_{x \to \infty} \frac{U^{\xi}(x)}{|x|} > 0, \liminf_{y \to \infty} \frac{U^{\rho}(y)}{|y|} > 0.
\end{equation*}
\end{assumption}
In Theorem \ref{thm:wass-contraction-game} we show that, under Assumptions \ref{ass:lip-flat-game} and \ref{ass:abs-cty-pi-game}, the pair of Best Response maps $\left(\Psi_{\sigma_\nu}, \Phi_{\sigma_\mu} \right):\mathcal{P}_1(\mathbb R^d) \times \mathcal{P}_1(\mathbb R^d)  \to \mathcal{P}_1(\mathbb R^d) \times \mathcal{P}_1(\mathbb R^d)$ is $\widetilde{\mathcal{W}}_1$-Lipschitz for any $\sigma_\nu,\sigma_\mu > 0,$ and for sufficiently large $\sigma_\nu,\sigma_\mu$ (cf.\ conditions \eqref{contractivity nu}-\eqref{contractivity mu}) becomes a contraction on $\big(\mathcal{P}_1(\mathbb R^d) \times \mathcal{P}_1(\mathbb R^d),\widetilde{\mathcal{W}}_1\big).$

Having proven that $\left(\Psi_{\sigma_\nu}, \Phi_{\sigma_\mu} \right)$ is an $L^1$-Wasserstein contraction in the high-regularization regime, we can conclude that it has a unique fixed point. Combining this with the fact that any MNE of \eqref{eq:mean-field-min-problem} has to be a fixed point of $\left(\Psi_{\sigma_\nu}, \Phi_{\sigma_\mu} \right)$, we arrive at uniqueness of the MNE of \eqref{eq:mean-field-min-problem} via Corollary \ref{cor:uniqueness-MNE}. Proposition \ref{thm:existence} ensures the existence and uniqueness of a solution $(\nu_t,\mu_t)_{t \geq 0} \in C\left([0,\infty];\mathcal{P}_1(\mathbb R^d) \times \mathcal{P}_1(\mathbb R^d)\right)$ to the Best Response flow \eqref{eq: mean-field-br}. 

Subsequently, we prove in Theorem \ref{thm:stability} a stability estimate for the flow, both with respect to the initial condition and the regularization parameters $\sigma_\nu, \sigma_\mu.$ An immediate consequence of Theorem \ref{thm:stability} is our main convergence result for min-max problems.
\begin{theorem}[Convergence of \eqref{eq: mean-field-br} in the high-regularized regime]
\label{thm:conv}
    Let Assumptions \ref{ass:lip-flat-game},\ref{ass:abs-cty-pi-game} hold. Let $(\nu_t,\mu_t)_{t \geq 0} \subset \mathcal{P}_{1}(\mathbb R^d) \times \mathcal{P}_{1}(\mathbb R^d)$ be the solution of \eqref{eq: mean-field-br} with parameters $\sigma_\nu,\sigma_\mu >0 $ and initial condition $(\nu_0,\mu_0) \in \mathcal{P}_{1}^{\lambda}(\mathbb R^d) \times  \mathcal{P}_{1}^{\lambda}(\mathbb R^d).$ If \eqref{contractivity nu alpha} and \eqref{contractivity mu alpha} hold, then for all $t \geq 0,$
    \begin{equation*}
        \widetilde{\mathcal{W}}_1 \big( (\nu_t,\mu_t), (\nu_{\sigma_\nu}^*, \mu_{\sigma_\mu}^*)\big) \leq e^{-t\left(\min\{\alpha_\nu,\alpha_\mu\}-\left(\alpha_\nu L_{\Psi_{\sigma_\nu},U^{\xi}} + \alpha_\mu L_{\Phi_{\sigma_\mu},U^{\rho}}\right)\right)}\widetilde{\mathcal{W}}_1 \big( (\nu_0,\mu_0), (\nu_{\sigma_\nu}^*, \mu_{\sigma_\mu}^*)\big).
    \end{equation*}
\end{theorem}
\begin{remark}
Note that, similarly to the case of single-agent optimization, the Wasserstein contractivity of the Best Response flow in Theorem \ref{thm:wass-contraction-game} does not depend on the choice of the learning rates $\alpha_\nu$, $\alpha_\mu$, however, these rates influence the stability estimates and convergence rates of \eqref{eq: mean-field-br} in Theorems \ref{thm:stability} and \ref{thm:conv}. Unlike in the single-agent case, in the min-max setting, in the stability and convergence results we  work with learning rate-dependent counterparts of the lower bounds on $\sigma_{\nu}$, $\sigma_{\mu}$. Note that \eqref{contractivity nu alpha}-\eqref{contractivity mu alpha} imply \eqref{contractivity nu}-\eqref{contractivity mu} since $\alpha_\nu, \alpha_\mu \geq \min\{\alpha_\nu, \alpha_\mu\}$. Moreover, in the case where $\alpha \coloneqq \alpha_\nu = \alpha_\mu,$ \eqref{contractivity nu}-\eqref{contractivity mu} and \eqref{contractivity nu alpha}-\eqref{contractivity mu alpha} coincide.
\end{remark}

\begin{remark}\label{remark:optimization_vs_games}
    Evidently, our results for the min-max problems \eqref{eq:mean-field-min-problem} are a direct counterpart to our results for the minimization problem \eqref{eq:mean-field-min-problem-opt}. However, working with the Best Response flow in the context of min-max problems leads to additional difficulties compared to its application in single-agent optimization. 
    
    We note that each player is allowed to have distinct regularization parameters, $\sigma_\nu$ and $\sigma_\mu$, as well as different learning rates, $\alpha_\nu$ and $\alpha_\mu$. The presence of differing learning rates, in particular, plays a significant role in determining suitable regularizers to ensure convergence (cf. Theorems \ref{thm:stability}, \ref{thm:conv}), in contrast to the single-agent setting, where the choices of $\sigma$ and $\alpha$ are independent, and the choice of $\alpha$ is unrestricted in the continuous-time setting, cf.\ the remark at the end of Section \ref{sec:comparison_Wasserstein_BR}.
    
    Problem \eqref{eq:mean-field-min-problem-opt} was previously examined in \cite{zhenjiefict} under the assumption that $F$ is convex. It was shown that the Best Response flow \eqref{eq: mean-field-br-opt} exhibits exponential convergence in the value function $F^\sigma$ at rate $\mathcal{O}(\sigma e^{-\alpha t})$ for any $\sigma > 0$. Subsequently, \cite{lascu-entr} extended the results of \cite{zhenjiefict} to the min–max setting \eqref{eq:mean-field-min-problem}, assuming $F$ is convex–concave and choosing $\sigma \coloneqq \sigma_\nu = \sigma_\mu$. It was proved that the Best Response flow \eqref{eq: mean-field-br} with $\alpha \coloneqq \alpha_\nu = \alpha_\mu$ converges exponentially at rate $\mathcal{O}(e^{-\alpha t})$ in both KL divergence and $\operatorname{TV}^2$ distance, and at rate $\mathcal{O}(\sigma e^{-\alpha t})$ in the Nikaidó–Isoda (NI) error (see \cite[Subsection 1.2]{lascu-entr}). However, \cite{lascu-entr} did not analyze convergence in terms of the Wasserstein distance.

    The present work significantly generalizes the findings of \cite{lascu-entr}. We show that exponential convergence to the MNE can be achieved without assuming convexity–concavity of $F$, while also allowing for different regularization parameters $\sigma_\nu, \sigma_\mu$ and learning rates $\alpha_\nu, \alpha_\mu$ for each player. Notably, the convergence result in \cite[Theorem 2]{lascu-entr} fundamentally relies on the fact that the players update their strategies at the same rate, i.e., $\alpha_\nu = \alpha_\mu$, whereas Theorem \ref{thm:conv} establishes that this condition is not necessary.
\end{remark}
\section{Acknowledgments and Disclosure of Funding}
We thank the anonymous reviewers for their insightful comments to improve the paper. R-AL gratefully acknowledges the support of the EPSRC Centre for Doctoral Training in Mathematical Modelling, Analysis and Computation (MAC-MIGS), funded by the UK Engineering and Physical Sciences Research Council (grant EP/S023291/1), Heriot-Watt University, and the University of Edinburgh. This work was carried out while R-AL was a PhD student at Heriot-Watt University.

\bibliographystyle{abbrv}
\bibliography{referencesFP}

\section*{NeurIPS Paper Checklist}

\begin{enumerate}

\item {\bf Claims}
    \item[] Question: Do the main claims made in the abstract and introduction accurately reflect the paper's contributions and scope?
    \item[] Answer: \answerYes{} 
    \item[] Justification: See abstract and introduction.
    \item[] Guidelines:
    \begin{itemize}
        \item The answer NA means that the abstract and introduction do not include the claims made in the paper.
        \item The abstract and/or introduction should clearly state the claims made, including the contributions made in the paper and important assumptions and limitations. A No or NA answer to this question will not be perceived well by the reviewers. 
        \item The claims made should match theoretical and experimental results, and reflect how much the results can be expected to generalize to other settings. 
        \item It is fine to include aspirational goals as motivation as long as it is clear that these goals are not attained by the paper. 
    \end{itemize}

\item {\bf Limitations}
    \item[] Question: Does the paper discuss the limitations of the work performed by the authors?
    \item[] Answer: \answerYes{} 
    \item[] Justification: See Sections 1.2, 2.2, 3.1.
    \item[] Guidelines:
    \begin{itemize}
        \item The answer NA means that the paper has no limitation while the answer No means that the paper has limitations, but those are not discussed in the paper. 
        \item The authors are encouraged to create a separate "Limitations" section in their paper.
        \item The paper should point out any strong assumptions and how robust the results are to violations of these assumptions (e.g., independence assumptions, noiseless settings, model well-specification, asymptotic approximations only holding locally). The authors should reflect on how these assumptions might be violated in practice and what the implications would be.
        \item The authors should reflect on the scope of the claims made, e.g., if the approach was only tested on a few datasets or with a few runs. In general, empirical results often depend on implicit assumptions, which should be articulated.
        \item The authors should reflect on the factors that influence the performance of the approach. For example, a facial recognition algorithm may perform poorly when image resolution is low or images are taken in low lighting. Or a speech-to-text system might not be used reliably to provide closed captions for online lectures because it fails to handle technical jargon.
        \item The authors should discuss the computational efficiency of the proposed algorithms and how they scale with dataset size.
        \item If applicable, the authors should discuss possible limitations of their approach to address problems of privacy and fairness.
        \item While the authors might fear that complete honesty about limitations might be used by reviewers as grounds for rejection, a worse outcome might be that reviewers discover limitations that aren't acknowledged in the paper. The authors should use their best judgment and recognize that individual actions in favor of transparency play an important role in developing norms that preserve the integrity of the community. Reviewers will be specifically instructed to not penalize honesty concerning limitations.
    \end{itemize}

\item {\bf Theory assumptions and proofs}
    \item[] Question: For each theoretical result, does the paper provide the full set of assumptions and a complete (and correct) proof?
    \item[] Answer: \answerYes{} 
    \item[] Justification: See Sections 2, 3 and Appendix.
    \item[] Guidelines:
    \begin{itemize}
        \item The answer NA means that the paper does not include theoretical results. 
        \item All the theorems, formulas, and proofs in the paper should be numbered and cross-referenced.
        \item All assumptions should be clearly stated or referenced in the statement of any theorems.
        \item The proofs can either appear in the main paper or the supplemental material, but if they appear in the supplemental material, the authors are encouraged to provide a short proof sketch to provide intuition. 
        \item Inversely, any informal proof provided in the core of the paper should be complemented by formal proofs provided in appendix or supplemental material.
        \item Theorems and Lemmas that the proof relies upon should be properly referenced. 
    \end{itemize}

    \item {\bf Experimental result reproducibility}
    \item[] Question: Does the paper fully disclose all the information needed to reproduce the main experimental results of the paper to the extent that it affects the main claims and/or conclusions of the paper (regardless of whether the code and data are provided or not)?
    \item[] Answer: \answerNA{} 
    \item[] Justification: The paper does not include experiments.
    \item[] Guidelines:
    \begin{itemize}
        \item The answer NA means that the paper does not include experiments.
        \item If the paper includes experiments, a No answer to this question will not be perceived well by the reviewers: Making the paper reproducible is important, regardless of whether the code and data are provided or not.
        \item If the contribution is a dataset and/or model, the authors should describe the steps taken to make their results reproducible or verifiable. 
        \item Depending on the contribution, reproducibility can be accomplished in various ways. For example, if the contribution is a novel architecture, describing the architecture fully might suffice, or if the contribution is a specific model and empirical evaluation, it may be necessary to either make it possible for others to replicate the model with the same dataset, or provide access to the model. In general. releasing code and data is often one good way to accomplish this, but reproducibility can also be provided via detailed instructions for how to replicate the results, access to a hosted model (e.g., in the case of a large language model), releasing of a model checkpoint, or other means that are appropriate to the research performed.
        \item While NeurIPS does not require releasing code, the conference does require all submissions to provide some reasonable avenue for reproducibility, which may depend on the nature of the contribution. For example
        \begin{enumerate}
            \item If the contribution is primarily a new algorithm, the paper should make it clear how to reproduce that algorithm.
            \item If the contribution is primarily a new model architecture, the paper should describe the architecture clearly and fully.
            \item If the contribution is a new model (e.g., a large language model), then there should either be a way to access this model for reproducing the results or a way to reproduce the model (e.g., with an open-source dataset or instructions for how to construct the dataset).
            \item We recognize that reproducibility may be tricky in some cases, in which case authors are welcome to describe the particular way they provide for reproducibility. In the case of closed-source models, it may be that access to the model is limited in some way (e.g., to registered users), but it should be possible for other researchers to have some path to reproducing or verifying the results.
        \end{enumerate}
    \end{itemize}

\item {\bf Open access to data and code}
    \item[] Question: Does the paper provide open access to the data and code, with sufficient instructions to faithfully reproduce the main experimental results, as described in supplemental material?
    \item[] Answer: \answerNA{} 
    \item[] Justification: The paper does not include experiments requiring code.
    \item[] Guidelines:
    \begin{itemize}
        \item The answer NA means that paper does not include experiments requiring code.
        \item Please see the NeurIPS code and data submission guidelines (\url{https://nips.cc/public/guides/CodeSubmissionPolicy}) for more details.
        \item While we encourage the release of code and data, we understand that this might not be possible, so “No” is an acceptable answer. Papers cannot be rejected simply for not including code, unless this is central to the contribution (e.g., for a new open-source benchmark).
        \item The instructions should contain the exact command and environment needed to run to reproduce the results. See the NeurIPS code and data submission guidelines (\url{https://nips.cc/public/guides/CodeSubmissionPolicy}) for more details.
        \item The authors should provide instructions on data access and preparation, including how to access the raw data, preprocessed data, intermediate data, and generated data, etc.
        \item The authors should provide scripts to reproduce all experimental results for the new proposed method and baselines. If only a subset of experiments are reproducible, they should state which ones are omitted from the script and why.
        \item At submission time, to preserve anonymity, the authors should release anonymized versions (if applicable).
        \item Providing as much information as possible in supplemental material (appended to the paper) is recommended, but including URLs to data and code is permitted.
    \end{itemize}

\item {\bf Experimental setting/details}
    \item[] Question: Does the paper specify all the training and test details (e.g., data splits, hyperparameters, how they were chosen, type of optimizer, etc.) necessary to understand the results?
    \item[] Answer: \answerNA{} 
    \item[] Justification: The paper does not include experiments.
    \item[] Guidelines:
    \begin{itemize}
        \item The answer NA means that the paper does not include experiments.
        \item The experimental setting should be presented in the core of the paper to a level of detail that is necessary to appreciate the results and make sense of them.
        \item The full details can be provided either with the code, in appendix, or as supplemental material.
    \end{itemize}

\item {\bf Experiment statistical significance}
    \item[] Question: Does the paper report error bars suitably and correctly defined or other appropriate information about the statistical significance of the experiments?
    \item[] Answer: \answerNA{} 
    \item[] Justification: The paper does not include experiments.
    \item[] Guidelines:
    \begin{itemize}
        \item The answer NA means that the paper does not include experiments.
        \item The authors should answer "Yes" if the results are accompanied by error bars, confidence intervals, or statistical significance tests, at least for the experiments that support the main claims of the paper.
        \item The factors of variability that the error bars are capturing should be clearly stated (for example, train/test split, initialization, random drawing of some parameter, or overall run with given experimental conditions).
        \item The method for calculating the error bars should be explained (closed form formula, call to a library function, bootstrap, etc.)
        \item The assumptions made should be given (e.g., Normally distributed errors).
        \item It should be clear whether the error bar is the standard deviation or the standard error of the mean.
        \item It is OK to report 1-sigma error bars, but one should state it. The authors should preferably report a 2-sigma error bar than state that they have a 96\% CI, if the hypothesis of Normality of errors is not verified.
        \item For asymmetric distributions, the authors should be careful not to show in tables or figures symmetric error bars that would yield results that are out of range (e.g. negative error rates).
        \item If error bars are reported in tables or plots, The authors should explain in the text how they were calculated and reference the corresponding figures or tables in the text.
    \end{itemize}

\item {\bf Experiments compute resources}
    \item[] Question: For each experiment, does the paper provide sufficient information on the computer resources (type of compute workers, memory, time of execution) needed to reproduce the experiments?
    \item[] Answer: \answerNA{} 
    \item[] Justification: The paper does not include experiments.
    \item[] Guidelines:
    \begin{itemize}
        \item The answer NA means that the paper does not include experiments.
        \item The paper should indicate the type of compute workers CPU or GPU, internal cluster, or cloud provider, including relevant memory and storage.
        \item The paper should provide the amount of compute required for each of the individual experimental runs as well as estimate the total compute. 
        \item The paper should disclose whether the full research project required more compute than the experiments reported in the paper (e.g., preliminary or failed experiments that didn't make it into the paper). 
    \end{itemize}
    
\item {\bf Code of ethics}
    \item[] Question: Does the research conducted in the paper conform, in every respect, with the NeurIPS Code of Ethics \url{https://neurips.cc/public/EthicsGuidelines}?
    \item[] Answer: \answerYes{} 
    \item[] Justification: We reviewed the NeurIPS Code of Ethics.
    \item[] Guidelines:
    \begin{itemize}
        \item The answer NA means that the authors have not reviewed the NeurIPS Code of Ethics.
        \item If the authors answer No, they should explain the special circumstances that require a deviation from the Code of Ethics.
        \item The authors should make sure to preserve anonymity (e.g., if there is a special consideration due to laws or regulations in their jurisdiction).
    \end{itemize}

\item {\bf Broader impacts}
    \item[] Question: Does the paper discuss both potential positive societal impacts and negative societal impacts of the work performed?
    \item[] Answer: \answerNA{} 
    \item[] Justification: This is a theory paper.
    \item[] Guidelines:
    \begin{itemize}
        \item The answer NA means that there is no societal impact of the work performed.
        \item If the authors answer NA or No, they should explain why their work has no societal impact or why the paper does not address societal impact.
        \item Examples of negative societal impacts include potential malicious or unintended uses (e.g., disinformation, generating fake profiles, surveillance), fairness considerations (e.g., deployment of technologies that could make decisions that unfairly impact specific groups), privacy considerations, and security considerations.
        \item The conference expects that many papers will be foundational research and not tied to particular applications, let alone deployments. However, if there is a direct path to any negative applications, the authors should point it out. For example, it is legitimate to point out that an improvement in the quality of generative models could be used to generate deepfakes for disinformation. On the other hand, it is not needed to point out that a generic algorithm for optimizing neural networks could enable people to train models that generate Deepfakes faster.
        \item The authors should consider possible harms that could arise when the technology is being used as intended and functioning correctly, harms that could arise when the technology is being used as intended but gives incorrect results, and harms following from (intentional or unintentional) misuse of the technology.
        \item If there are negative societal impacts, the authors could also discuss possible mitigation strategies (e.g., gated release of models, providing defenses in addition to attacks, mechanisms for monitoring misuse, mechanisms to monitor how a system learns from feedback over time, improving the efficiency and accessibility of ML).
    \end{itemize}
    
\item {\bf Safeguards}
    \item[] Question: Does the paper describe safeguards that have been put in place for responsible release of data or models that have a high risk for misuse (e.g., pretrained language models, image generators, or scraped datasets)?
    \item[] Answer: \answerNA{} 
    \item[] Justification: The paper poses no such risks.
    \item[] Guidelines:
    \begin{itemize}
        \item The answer NA means that the paper poses no such risks.
        \item Released models that have a high risk for misuse or dual-use should be released with necessary safeguards to allow for controlled use of the model, for example by requiring that users adhere to usage guidelines or restrictions to access the model or implementing safety filters. 
        \item Datasets that have been scraped from the Internet could pose safety risks. The authors should describe how they avoided releasing unsafe images.
        \item We recognize that providing effective safeguards is challenging, and many papers do not require this, but we encourage authors to take this into account and make a best faith effort.
    \end{itemize}

\item {\bf Licenses for existing assets}
    \item[] Question: Are the creators or original owners of assets (e.g., code, data, models), used in the paper, properly credited and are the license and terms of use explicitly mentioned and properly respected?
    \item[] Answer: \answerNA{} 
    \item[] Justification: The paper does not use existing assets.
    \item[] Guidelines:
    \begin{itemize}
        \item The answer NA means that the paper does not use existing assets.
        \item The authors should cite the original paper that produced the code package or dataset.
        \item The authors should state which version of the asset is used and, if possible, include a URL.
        \item The name of the license (e.g., CC-BY 4.0) should be included for each asset.
        \item For scraped data from a particular source (e.g., website), the copyright and terms of service of that source should be provided.
        \item If assets are released, the license, copyright information, and terms of use in the package should be provided. For popular datasets, \url{paperswithcode.com/datasets} has curated licenses for some datasets. Their licensing guide can help determine the license of a dataset.
        \item For existing datasets that are re-packaged, both the original license and the license of the derived asset (if it has changed) should be provided.
        \item If this information is not available online, the authors are encouraged to reach out to the asset's creators.
    \end{itemize}

\item {\bf New assets}
    \item[] Question: Are new assets introduced in the paper well documented and is the documentation provided alongside the assets?
    \item[] Answer: \answerNA{} 
    \item[] Justification: The paper does not release new assets.
    \item[] Guidelines:
    \begin{itemize}
        \item The answer NA means that the paper does not release new assets.
        \item Researchers should communicate the details of the dataset/code/model as part of their submissions via structured templates. This includes details about training, license, limitations, etc. 
        \item The paper should discuss whether and how consent was obtained from people whose asset is used.
        \item At submission time, remember to anonymize your assets (if applicable). You can either create an anonymized URL or include an anonymized zip file.
    \end{itemize}

\item {\bf Crowdsourcing and research with human subjects}
    \item[] Question: For crowdsourcing experiments and research with human subjects, does the paper include the full text of instructions given to participants and screenshots, if applicable, as well as details about compensation (if any)? 
    \item[] Answer: \answerNA{} 
    \item[] Justification: The paper does not involve crowdsourcing nor research with human subjects.
    \item[] Guidelines:
    \begin{itemize}
        \item The answer NA means that the paper does not involve crowdsourcing nor research with human subjects.
        \item Including this information in the supplemental material is fine, but if the main contribution of the paper involves human subjects, then as much detail as possible should be included in the main paper. 
        \item According to the NeurIPS Code of Ethics, workers involved in data collection, curation, or other labor should be paid at least the minimum wage in the country of the data collector. 
    \end{itemize}

\item {\bf Institutional review board (IRB) approvals or equivalent for research with human subjects}
    \item[] Question: Does the paper describe potential risks incurred by study participants, whether such risks were disclosed to the subjects, and whether Institutional Review Board (IRB) approvals (or an equivalent approval/review based on the requirements of your country or institution) were obtained?
    \item[] Answer: \answerNA{} 
    \item[] Justification: The paper does not involve crowdsourcing nor research with human subjects.
    \item[] Guidelines:
    \begin{itemize}
        \item The answer NA means that the paper does not involve crowdsourcing nor research with human subjects.
        \item Depending on the country in which research is conducted, IRB approval (or equivalent) may be required for any human subjects research. If you obtained IRB approval, you should clearly state this in the paper. 
        \item We recognize that the procedures for this may vary significantly between institutions and locations, and we expect authors to adhere to the NeurIPS Code of Ethics and the guidelines for their institution. 
        \item For initial submissions, do not include any information that would break anonymity (if applicable), such as the institution conducting the review.
    \end{itemize}

\item {\bf Declaration of LLM usage}
    \item[] Question: Does the paper describe the usage of LLMs if it is an important, original, or non-standard component of the core methods in this research? Note that if the LLM is used only for writing, editing, or formatting purposes and does not impact the core methodology, scientific rigorousness, or originality of the research, declaration is not required.
    \item[] Answer: \answerNA{} 
    \item[] Justification: The core method development in this research does not involve LLMs as any important, original, or non-standard components.
    \item[] Guidelines:
    \begin{itemize}
        \item The answer NA means that the core method development in this research does not involve LLMs as any important, original, or non-standard components.
        \item Please refer to our LLM policy (\url{https://neurips.cc/Conferences/2025/LLM}) for what should or should not be described.
    \end{itemize}

\end{enumerate}
\newpage

\appendix
\section{Proofs of the main results in Section \ref{sec:main}}\label{sec:proofs}
In this section we present the proofs of the results from Section \ref{sec:main}. We start with the proof of Theorem \ref{thm:wass-contraction}.
\begin{proof}[Proof of Theorem \ref{thm:wass-contraction}]
    The proof is an adaptation of \cite[Proposition 14, Corollary 15]{zhenjiefict} for the $L^1$-Wasserstein distance. We give the proof in two steps.
    
    \emph{Step 1:} First, we show that the the Best Response map $\Psi_{\sigma}[\nu]$ belongs to $\mathcal{P}_1(\mathbb R^d),$ and that $\frac{\mathrm{d}\Psi_{\sigma}[\nu]}{\mathrm{d}\xi}(x)$ is uniformly bounded from below and above. From Assumption \ref{ass:lip-flat}, we have
\begin{equation}
\label{eq:bound-psi-unnormalized-density-opt}
\exp \left( -\frac{1}{\sigma} C_F - U^{\xi} \left(x\right) \right) \leq \exp \left(- \frac{1}{\sigma}\frac{\delta F}{\delta \nu}(\nu,x)  - U^{\xi} \left(x\right) \right) \leq \exp \left( \frac{1}{\sigma} C_F - U^{\xi} \left(x\right) \right).
\end{equation}
Integrating over $\mathbb R^d$ and using the fact that $\int_{\mathbb R^d} e^{-U^{\xi}(x)}\mathrm{d}x = 1,$ we obtain 
\begin{equation}
\label{eq:bound-psi-normalization-constant-opt}
\exp\left(-\frac{C_F}{\sigma}\right) \leq Z_{\sigma}(\nu) \coloneqq \int_{\mathbb R^d} \exp \left(- \frac{1}{\sigma}\frac{\delta F}{\delta \nu}(\nu,x)  - U^{\xi} \left(x\right) \right) \mathrm{d}x \leq \exp\left(\frac{C_F}{\sigma}\right).
\end{equation}
Thus, we obtain 
 \begin{equation}
    \label{eq: estimate-psi-opt}
        k_{\Psi_{\sigma}}e^{-U^{\xi}(x)} \leq \Psi_{\sigma}[\nu](x) \leq K_{\Psi_{\sigma}}e^{-U^{\xi}(x)},
    \end{equation}
with constant \(K_{\Psi_{\sigma}} = \frac{1}{k_{\Psi_{\sigma}}} = \exp \left( \frac{2C_F}{\sigma} \right)> 1\), where, by an abuse of notation, $\Psi_{\sigma}[\nu](x)$ denotes the density of $\Psi_{\sigma}[\nu]$ with respect to $\lambda$ on $\mathbb R^d.$ Moreover, by definition, 
\[
\int_{\mathbb R^d} \Psi_{\sigma}[\nu]\left(\mathrm{d}x\right) = \int_{\mathbb R^d} \Psi_{\sigma}[\nu]\left(x\right) \mathrm{d}x 
= \frac{1}{Z_{\sigma}(\nu)}\int_{\mathbb R^d}  \exp \left(- \frac{1}{\sigma} \frac{\delta F}{\delta \nu}(\nu, x) - U^{\xi}\left(x\right)\right) \mathrm{d}x = 1,
\]
and using Assumption \ref{ass:abs-cty-pi},
\begin{equation*}
    \int_{\mathbb R^d} |x|\Psi_{\sigma}[\nu]\left(\mathrm{d}x\right) = \int_{\mathbb R^d} |x|\Psi_{\sigma}[\nu]\left(x\right) \mathrm{d}x \leq K_{\Psi_{\sigma}}\int_{\mathbb R^d} |x|e^{-U^{\xi}(x)} \mathrm{d}x < \infty.
\end{equation*}
Therefore, \(\Psi_{\sigma}[\nu] \in \mathcal P_1\left(\mathbb R^d\right)\). 

\emph{Step 2:} We show that the map \(\Psi_{\sigma} : \mathcal P_1 \left(\mathbb R^d\right)  \to \mathcal P_1 \left(\mathbb R^d\right)\) is $\mathcal{W}_1$-Lipschitz and then that for sufficiently large $\sigma$ it is actually a $\mathcal{W}_1$-contraction. From Assumption \ref{ass:lip-flat} and the estimate \(\left|e^x - e^y\right| \leq e^{\max\{x,y\}} \left|x-y\right|\), we have
\begin{multline}
\label{eq:bound-diff-psi-opt}
\left | \exp \left(- \frac{1}{\sigma} \frac{\delta F}{\delta \nu}(\nu, x) - U^{\xi} \left(x\right) \right) - \exp \left(- \frac{1}{\sigma} \frac{\delta F}{\delta \nu}(\nu', x) - U^{\xi} \left(x\right) \right) \right| \\
\leq \frac{L_F}{\sigma} \exp \left(\frac{1}{\sigma}C_F\right)e^{-U^{\xi}(x)}\mathcal{W}_1(\nu,\nu').
\end{multline}
Integrating the previous inequality with respect to \(x\), we obtain
\begin{equation}
\label{eq:bound-diff-psi-Z-opt}
    \left|Z_{\sigma}(\nu) - Z_{\sigma}(\nu')\right| \leq \frac{L_F}{\sigma} \exp \left(\frac{1}{\sigma}C_F\right)\mathcal{W}_1(\nu,\nu').
\end{equation}
Therefore, we have that
\begin{align*}
    \Big|\Psi_{\sigma}[\nu](x) - \Psi_{\sigma}[\nu'](x)\Big| &=  \Bigg|\frac{1}{Z_{\sigma}(\nu)}\exp \left(- \frac{1}{\sigma} \frac{\delta F}{\delta \nu}(\nu, x) - U^{\xi} \left(x\right) \right)\\ 
    &- \frac{1}{Z_{\sigma}(\nu')}\exp \left(- \frac{1}{\sigma} \frac{\delta F}{\delta \nu}(\nu, x) - U^{\xi} \left(x\right) \right)\\ &+ \frac{1}{Z_{\sigma}(\nu')}\exp \left(- \frac{1}{\sigma} \frac{\delta F}{\delta \nu}(\nu, x) - U^{\xi} \left(x\right) \right)\\ &- \frac{1}{Z_{\sigma}(\nu')}\exp \left(- \frac{1}{\sigma} \frac{\delta F}{\delta \nu}(\nu', x) - U^{\xi} \left(x\right) \right)\Bigg|\\
    &\leq \exp \left(- \frac{1}{\sigma} \frac{\delta F}{\delta \nu}(\nu, x) - U^{\xi} \left(x\right) \right) \frac{\Big|Z_{\sigma}(\nu') - Z_{\sigma}(\nu)\Big|}{Z_{\sigma}(\nu)Z_{\sigma}(\nu')}\\ 
    &+ \frac{1}{Z_{\sigma}(\nu')}\Bigg|\exp \left(- \frac{1}{\sigma} \frac{\delta F}{\delta \nu}(\nu, x) - U^{\xi} \left(x\right) \right)\\ &- \exp \left(- \frac{1}{\sigma} \frac{\delta F}{\delta \nu}(\nu', x) - U^{\xi} \left(x\right) \right)\Bigg|.
\end{align*}
Using estimates \eqref{eq:bound-psi-unnormalized-density-opt}, \eqref{eq:bound-psi-normalization-constant-opt}, \eqref{eq:bound-diff-psi-opt} and \eqref{eq:bound-diff-psi-Z-opt}, we arrive at the Lipschitz property 
\begin{equation}
\label{eq: Lipschitz-psi-opt}
    \left|\Psi_{\sigma}[\nu](x) - \Psi_{\sigma}[\nu'](x)\right| \leq L_{\Psi_{\sigma}}e^{-U^{\xi}(x)}\mathcal{W}_1(\nu,\nu'),
\end{equation}
with 
\begin{equation*}
    L_{\Psi_{\sigma}} \coloneqq \frac{L_F}{\sigma}\exp\left(\frac{2C_F}{\sigma}\right)\left(1+\exp\left(\frac{2C_F}{\sigma}\right)\right) >0.
\end{equation*} 
Now, applying \cite[Lemma 16]{zhenjiefict} with $p=1$ and $\mu(\mathrm{d}x)=e^{-U^{\xi}(x)}\mathrm{d}x$ gives
\begin{equation*}
    \mathcal{W}_1\left(\Psi_{\sigma}[\nu], \Psi_{\sigma}[\nu']\right) \leq \int_{\mathbb R^d} |x|e^{-U^{\xi}(x)}\mathrm{d}x \left\|\frac{\Psi_{\sigma}[\nu](\cdot)}{e^{-U^{\xi}(\cdot)}} - \frac{\Psi_{\sigma}[\nu'](\cdot)}{e^{-U^{\xi}(\cdot)}}\right\|_{L^{\infty}(\mathbb R^d)}.
\end{equation*}
Hence, using \eqref{eq: Lipschitz-psi-opt} and setting 
\begin{equation*}
    L_{\Psi_{\sigma},U^{\xi}} \coloneqq L_{\Psi_{\sigma}}\int_{\mathbb R^d} |x|e^{-U^{\xi}(x)}\mathrm{d}x > 0
\end{equation*}
gives
\begin{equation*}
    \mathcal{W}_1\left(\Psi_{\sigma}[\nu], \Psi_{\sigma}[\nu']\right) \leq L_{\Psi_{\sigma},U^{\xi}}\mathcal{W}_1(\nu,\nu').
\end{equation*}
If $$\sigma > 2C_F + e(e+1)L_F\int_{\mathbb R^d} |x|e^{-U^{\xi}(x)}\mathrm{d}x,$$ then immediately $L_{\Psi_{\sigma},U^{\xi}} \in (0,1),$ hence $\Psi_{\sigma}:\mathcal{P}_1(\mathbb R^d) \to \mathcal{P}_1(\mathbb R^d)$ is a $\mathcal{W}_1$-contraction.
\end{proof}
\begin{remark}
\label{rmk:contractivity-not-imply-strong-convex}
The lower bound on $\sigma$ in Theorem \ref{thm:wass-contraction} does not directly imply that the functional $F^{\sigma}$ in \eqref{eq:mean-field-min-problem-opt} is strongly convex with respect to $\operatorname{TV^2},$ as the following formal computation demonstrates. Note that, for any $\nu',\nu \in \mathcal{P}_1(\mathbb R^d),$
\begin{align*}
    &F^{\sigma}(\nu')-F^{\sigma}(\nu) - \int_{\mathbb R^d} \frac{\delta F^{\sigma}}{\delta \nu}(\nu,x)(\nu'-\nu)(\mathrm{d}x)\\ 
    &=  F(\nu')-F(\nu) - \int \frac{\delta F}{\delta \nu}(\nu,x)(\nu'-\nu)(\mathrm{d}x)+\\
    &+\sigma\operatorname{KL}(\nu'|\xi) - \sigma\operatorname{KL}(\nu|\xi) - \sigma\int \frac{\delta \operatorname{KL}(\cdot|\xi)}{\delta \nu}(\nu,x)(\nu'-\nu)(\mathrm{d}x).
\end{align*}
Remark \ref{rmk:FTC} and Assumption \ref{ass:lip-flat} give
\begin{align*}
    &\left|F(\nu')-F(\nu) - \int_{\mathbb R^d} \frac{\delta F}{\delta \nu}(\nu,x)(\nu'-\nu)(\mathrm{d}x)\right|\\
    &= \left|\int_0^1 \int \left(\frac{\delta F}{\delta \nu}(\nu+\varepsilon(\nu'-\nu),x) - \frac{\delta F}{\delta \nu}(\nu,x)\right)(\nu'-\nu)(\mathrm{d}x)\mathrm{d}\varepsilon\right|\\
    &\leq L_F\int_0^1 \int \mathcal{W}_1(\nu+\varepsilon(\nu'-\nu),\nu)|\nu'-\nu|(\mathrm{d}x)\mathrm{d}\varepsilon\\
    &= 2L_F\operatorname{TV}(\nu',\nu)\int_0^1 \mathcal{W}_1(\nu+\varepsilon(\nu'-\nu),\nu)\mathrm{d}\varepsilon\\
    &\leq 2L_F\operatorname{TV}(\nu',\nu) \mathcal{W}_1(\nu',\nu)\int_0^1\varepsilon \mathrm{d}\varepsilon\\
    &= L_F\operatorname{TV}(\nu',\nu)\mathcal{W}_1(\nu',\nu),
\end{align*}
where the second equality is due to \cite[Lemma 2.1]{tsybakov2008introduction} and the second inequality uses the convexity of the Wasserstein distance.

On the other hand, since $$\frac{\delta \operatorname{KL}(\cdot|\xi)}{\delta \nu}(\nu,x) = \log \frac{\mathrm{d} \nu}{\mathrm{d} \xi}(x) - \operatorname{KL(\nu|\xi)},$$ we have
\begin{equation}
\label{eq:no_xi}
    \operatorname{KL}(\nu'|\xi) - \operatorname{KL}(\nu|\xi) - \int_{\mathbb R^d} \frac{\delta \operatorname{KL}(\cdot|\xi)}{\delta \nu}(\nu,x)(\nu'-\nu)(\mathrm{d}x) = \operatorname{KL}(\nu'|\nu).
\end{equation}
Moreover, by Pinsker's inequality, $\operatorname{TV}^2(\nu',\nu) \leq \frac{1}{2}\operatorname{KL}(\nu'|\nu),$ so we obtain
\begin{equation}
\label{eq:KL-strongly-convex-rel-TV}
    \operatorname{KL}(\nu'|\xi) - \operatorname{KL}(\nu|\xi) - \int_{\mathbb R^d} \frac{\delta \operatorname{KL}(\cdot|\xi)}{\delta \nu}(\nu,x)(\nu'-\nu)(\mathrm{d}x) \geq 2\operatorname{TV}^2(\nu',\nu).
\end{equation}
Therefore, 
\begin{equation*}
    F^{\sigma}(\nu')-F^{\sigma}(\nu) - \int \frac{\delta F^{\sigma}}{\delta \nu}(\nu,x)(\nu'-\nu)(dx) \geq -L_F\operatorname{TV}(\nu',\nu)\mathcal{W}_1(\nu',\nu) + 2\sigma\operatorname{TV}^2(\nu',\nu).
\end{equation*}
This shows that while the $\operatorname{KL}$ divergence has natural strong convexity in terms of $\operatorname{TV}^2$ (cf. \ equation \eqref{eq:KL-strongly-convex-rel-TV}), this does not align with the $\mathcal{W}_1$-based Lipschitz continuity of $F.$ Therefore, it does not seem possible to directly deduce strong convexity of $F^{\sigma}$ in the $\operatorname{TV}^2$ sense simply by choosing $\sigma$ large enough relative to $L_F.$ This is one of the motivations for proving convergence via contraction of the Best Response operator rather than by attempting to enforce strong convexity of the objective function.

As a further remark, it would be possible to modify Assumption \ref{ass:lip-flat} to assume Lipschitz continuity with respect to $\operatorname{TV}$ rather than $\mathcal{W}_1.$ Then we would obtain
$$F^{\sigma}(\nu')-F^{\sigma}(\nu) - \int \frac{\delta F^{\sigma}}{\delta \nu}(\nu,x)(\nu'-\nu)(\mathrm{d}x) \geq -L_F\operatorname{TV}^2(\nu',\nu) + 2\sigma\operatorname{TV}^2(\nu',\nu),$$ so choosing $\sigma > \frac{1}{2}L_F$ would ensure strong convexity of $F^{\sigma}$ relative to $\operatorname{TV}^2.$ Even though existence and uniqueness of a minimizer could now be established, it is not obvious how to prove that $\operatorname{TV}^2(\nu_t, \nu_\sigma^*) \to 0$ for the Best Response flow $(\nu_t)_{t \geq 0}$. Moreover, this would change the setting of the problem, which would no longer be comparable to \cite{zhenjiefict,leahy}, which work in the Wasserstein space.

Finally, we emphasize that this argument eliminates any dependence on the choice of the reference measure $\xi$ (cf.\ equation \eqref{eq:no_xi}) and hence any lower bound on $\sigma$ obtained this way, which would guarantee strong convexity of $F^{\sigma}$ with respect to $\operatorname{TV}^2$, would be independent of the choice of $\xi$. On the other hand, one of the important conclusions from our paper is that by following the proof via $\mathcal{W}_1$-contractivity of the Best Response operator, we can identify the connection between the choice of $\sigma$ and the reference measure $\xi$ expressed in terms of \eqref{eq:contractivity_condition}, that guarantees convergence of the BR flow to the global minimizer. This allows one to choose a smaller value of $\sigma$ if the measure $\xi$ is chosen appropriately.
\end{remark}
\begin{proof}[Proof of Corollary \ref{cor:uniqueness-minimizer}]
    Since $\sigma > 2C_F + e(e+1)L_F\int_{\mathbb R^d} |x|e^{-U^{\xi}(x)}\mathrm{d}x,$ we can apply Banach's fixed point theorem  for the contraction $\Psi_{\sigma}$ on the complete metric space $\left(\mathcal{P}_1(\mathbb R^d),\mathcal{W}_1\right)$ and deduce that the fixed point problem $\Psi_{\sigma}[\nu] = \nu$ admits a unique solution. We also know from Proposition \ref{prop:optimality} that any local minimizer $\nu_{\sigma}^* \in \mathcal{P}_1(\mathbb R^d)$ of $F^{\sigma}$ is equivalent to $\lambda,$ and for $\lambda$-a.a. $x \in \mathbb R^d,$
    \begin{equation*}
        \nu_{\sigma}^*(\mathrm{d}x) = \frac{e^{-\frac{1}{\sigma} \frac{\delta F}{\delta \nu}(\nu_{\sigma}^*, x)-U^{\xi}(x)}}{\int_{\mathbb R^d} e^{-\frac{1}{\sigma} \frac{\delta F}{\delta \nu}(\nu_{\sigma}^*, x')-U(x')}\mathrm{d}x'}\mathrm{d}x.
    \end{equation*}
    Therefore, $\Psi_{\sigma}[\nu_{\sigma}^*] = \nu_{\sigma}^*$ due to the definition of $\Psi_{\sigma}.$ Since $\Psi_{\sigma}[\nu] = \nu$ admits a unique solution, it follows that $F^{\sigma}$ has a unique minimizer, that is $\nu_{\sigma}^*,$ and hence it is actually the global minimizer.
\end{proof}
\begin{proof}[Proof of Theorem \ref{thm:stability-opt}]
\emph{Step 1:} First, we show that the map $\Psi_{\sigma}$ is $\mathcal{W}_1$-Lipschitz with respect to $\sigma.$ From Assumption \ref{ass:lip-flat} and the estimate \(\left|e^x - e^y\right| \leq e^{\max\{x,y\}} \left|x-y\right|\), we have
\begin{multline}
\label{eq:bound-diff-psi-1-opt}
\left | \exp \left(- \frac{1}{\sigma} \frac{\delta F}{\delta \nu}(\nu, x) - U^{\xi} \left(x\right) \right) - \exp \left(- \frac{1}{\sigma'} \frac{\delta F}{\delta \nu}(\nu, x) - U^{\xi} \left(x\right) \right) \right| \\
\leq C_F\frac{|\sigma - \sigma'|}{\sigma \sigma'} \exp \left(C_F\min\{\sigma,\sigma'\}\right)e^{-U^{\xi}(x)}.
\end{multline}
Integrating the previous inequality with respect to \(x\), we obtain
\begin{equation}
\label{eq:bound-diff-psi-Z-1-opt}
    \left|Z_{\sigma}(\nu) - Z_{\sigma'}(\nu)\right| \leq C_F\frac{|\sigma - \sigma'|}{\sigma \sigma'} \exp \left(C_F\min\{\sigma,\sigma'\}\right).
\end{equation}
Therefore, we have that
\begin{align*}
    \Big|\Psi_{\sigma}[\nu](x) - \Psi_{\sigma'}[\nu](x)\Big| &=  \Bigg|\frac{1}{Z_{\sigma}(\nu)}\exp \left(- \frac{1}{\sigma} \frac{\delta F}{\delta \nu}(\nu, x) - U^{\xi} \left(x\right) \right)\\ 
    &- \frac{1}{Z_{\sigma'}(\nu)}\exp \left(- \frac{1}{\sigma} \frac{\delta F}{\delta \nu}(\nu, x) - U^{\xi} \left(x\right) \right)\\ &+ \frac{1}{Z_{\sigma'}(\nu)}\exp \left(- \frac{1}{\sigma} \frac{\delta F}{\delta \nu}(\nu, x) - U^{\xi} \left(x\right) \right)\\ &- \frac{1}{Z_{\sigma'}(\nu)}\exp \left(- \frac{1}{\sigma'} \frac{\delta F}{\delta \nu}(\nu, x) - U^{\xi} \left(x\right) \right)\Bigg|\\
    &\leq \exp \left(- \frac{1}{\sigma} \frac{\delta F}{\delta \nu}(\nu, x) - U^{\xi} \left(x\right) \right) \frac{\Big|Z_{\sigma'}(\nu) - Z_{\sigma}(\nu)\Big|}{Z_{\sigma}(\nu)Z_{\sigma'}(\nu)}\\ 
    &+ \frac{1}{Z_{\sigma'}(\nu)}\Bigg|\exp \left(- \frac{1}{\sigma} \frac{\delta F}{\delta \nu}(\nu, x) - U^{\xi} \left(x\right) \right)\\ &- \exp \left(- \frac{1}{\sigma'} \frac{\delta F}{\delta \nu}(\nu, x) - U^{\xi} \left(x\right) \right)\Bigg|.
\end{align*}
Using estimates \eqref{eq:bound-psi-unnormalized-density-opt}, \eqref{eq:bound-psi-normalization-constant-opt}, \eqref{eq:bound-diff-psi-1-opt} and \eqref{eq:bound-diff-psi-Z-1-opt}, we arrive at 
\begin{equation}
\label{eq: Lipschitz-psi-1-opt}
    \left|\Psi_{\sigma}[\nu](x) - \Psi_{\sigma'}[\nu](x)\right| \leq L_{\Psi,\sigma,\sigma'}|\sigma - \sigma'|e^{-U^{\xi}(x)},
\end{equation}
with 
\begin{equation*}
    L_{\Psi,\sigma,\sigma'} \coloneqq \frac{C_F}{\sigma \sigma'} \exp \left(C_F\left(\min\{\sigma,\sigma'\} + \frac{1}{\sigma'}\right)\right)\left(1+\exp\left(\frac{2C_F}{\sigma}\right)\right) >0.
\end{equation*} 
Now, applying \cite[Lemma 16]{zhenjiefict} with $p=1$ and $\mu(\mathrm{d}x)=e^{-U^{\xi}(x)}\mathrm{d}x$ gives
\begin{equation*}
    \mathcal{W}_1\left(\Psi_{\sigma}[\nu], \Psi_{\sigma'}[\nu]\right) \leq \int_{\mathbb R^d} |x|e^{-U^{\xi}(x)}\mathrm{d}x \left\|\frac{\Psi_{\sigma}[\nu](\cdot)}{e^{-U^{\xi}(\cdot)}} - \frac{\Psi_{\sigma'}[\nu](\cdot)}{e^{-U^{\xi}(\cdot)}}\right\|_{L^{\infty}(\mathbb R^d)}.
\end{equation*}
Hence, using \eqref{eq: Lipschitz-psi-1-opt} and setting 
\begin{equation*}
    L_{\Psi,\sigma,\sigma',U^{\xi}} \coloneqq L_{\Psi,\sigma,\sigma'}\int_{\mathbb R^d} |x|e^{-U^{\xi}(x)}\mathrm{d}x > 0
\end{equation*}
gives
\begin{equation*}
    \mathcal{W}_1\left(\Psi_{\sigma}[\nu], \Psi_{\sigma'}[\nu]\right) \leq L_{\Psi,\sigma,\sigma',U^{\xi}}|\sigma - \sigma'|.
\end{equation*}
\emph{Step 2:} Now, we prove the stability of the solution to \eqref{eq: mean-field-br-opt} with respect to $\sigma$ and $\nu_0.$ Since $\nu_0,\nu_0' \in \mathcal{P}_{1}^{\lambda}(\mathbb R^d),$ it follows by Theorem \ref{thm:existence-opt} that $(\nu_t)_{t \geq 0}, (\nu_t')_{t \geq 0}  \subset \mathcal{P}_{1}^{\lambda}(\mathbb R^d).$ Let $\psi \in \textnormal{Lip}_1(\mathbb R^d).$ Then we have that
\begin{multline*}
\begin{aligned}
    &\int_{\mathbb R^d}\psi(x)\left(\nu_t(x)-\nu_t'(x)\right)\mathrm{d}x\\ 
    &=\int_{\mathbb R^d} \psi(x)\left(e^{-\alpha t}\nu_0(x) + \int_0^t \alpha e^{-\alpha(t-s)} \Psi_{\sigma}\left[\nu_s\right](x) \mathrm{d}s - e^{-\alpha t}\nu_0'(x) - \int_0^t \alpha e^{-\alpha(t-s)}\Psi_{\sigma'}[\nu_s'](x)\mathrm{d}s\right)\mathrm{d}x\\
    &=\int_{\mathbb R^d} \psi(x)\left(e^{-\alpha t}\left(\nu_0(x) - \nu_0'(x)\right) + \int_0^t \alpha e^{-\alpha(t-s)} \left(\Psi_{\sigma}\left[\nu_s\right](x) - \Psi_{\sigma'}\left[\nu_s'\right](x)\right) \mathrm{d}s\right)\mathrm{d}x\\
    &= e^{-\alpha t}\int_{\mathbb R^d} \psi(x)\left(\nu_0(x) - \nu_0'(x)\right)\mathrm{d}x + \int_{\mathbb R^d} \psi(x) \int_0^t \alpha e^{-\alpha(t-s)} \left(\Psi_{\sigma}\left[\nu_s\right](x) - \Psi_{\sigma'}\left[\nu_s'\right](x)\right) \mathrm{d}s\mathrm{d}x\\
    &= e^{-\alpha t}\int_{\mathbb R^d} \psi(x)\left(\nu_0(x) - \nu_0'(x)\right)\mathrm{d}x + \int_0^t \alpha e^{-\alpha(t-s)} \int_{\mathbb R^d} \psi(x)\left(\Psi_{\sigma}\left[\nu_s\right](x) - \Psi_{\sigma'}\left[\nu_s'\right](x)\right)\mathrm{d}x\mathrm{d}s\\
    &\leq e^{-\alpha t}\mathcal{W}_1(\nu_0, \nu_0') + \int_0^t \alpha e^{-\alpha(t-s)} \mathcal{W}_1\left(\Psi_{\sigma}\left[\nu_s\right], \Psi_{\sigma'}\left[\nu_s'\right]\right)\mathrm{d}s.
\end{aligned}
\end{multline*}
where in the last equality we used Fubini's theorem, in the first inequality we used the definition of $\mathcal{W}_1.$ Now, using Theorem \ref{thm:wass-contraction}, \emph{Step 1} and the triangle inequality, we have
\begin{align*}
    \mathcal{W}_1\left(\Psi_{\sigma}\left[\nu_t\right], \Psi_{\sigma'}\left[\nu_t'\right]\right) &\leq \mathcal{W}_1\left(\Psi_{\sigma}\left[\nu_t\right], \Psi_{\sigma}\left[\nu_t'\right]\right) + \mathcal{W}_1\left(\Psi_{\sigma}\left[\nu_t'\right], \Psi_{\sigma'}\left[\nu_t'\right]\right)\\
    &\leq L_{\Psi_{\sigma},U^{\xi}}\mathcal{W}_1(\nu_t,\nu_t') + L_{\Psi,\sigma,\sigma',U^{\xi}}|\sigma-\sigma'|.
\end{align*}
Taking supremum over $\psi$ gives
\begin{align*}
\mathcal{W}_1(\nu_t,\nu_t') \leq e^{-\alpha t}\mathcal{W}_1(\nu_0, \nu_0') + \int_0^t \alpha e^{-\alpha(t-s)} \left[L_{\Psi_{\sigma},U^{\xi}}\mathcal{W}_1(\nu_s,\nu_s') + L_{\Psi,\sigma,\sigma',U^{\xi}}|\sigma-\sigma'|\right]\mathrm{d}s.  
\end{align*}
Since $\sigma > 2C_F + e(e+1)L_F\int_{\mathbb R^d} |x|e^{-U^{\xi}(x)}\mathrm{d}x,$ it follows that $L_{\Psi_{\sigma},U^{\xi}} < 1.$ 

Therefore, applying Gronwall's lemma to the function $t \mapsto e^{\alpha t}\mathcal{W}_1(\nu_t, \nu_t')$ yields
\begin{multline}
\begin{aligned}
\label{eq:first-estimate-opt}
    \mathcal{W}_1(\nu_t, \nu_t') \leq e^{-\alpha t\left(1-L_{\Psi_{\sigma},U^{\xi}} \right)}\mathcal{W}_1(\nu_0, \nu_0') + |\sigma'-\sigma|\frac{L_{\Psi,\sigma,\sigma',U^{\xi}}}{1-L_{\Psi_{\sigma},U^{\xi}}}\left(1 - e^{-\alpha t\left(1-L_{\Psi_{\sigma},U^{\xi}}\right)}\right).
\end{aligned}
\end{multline}
Due to the fact that $\nu_{\sigma}^* = \Psi_{\sigma}[\nu_{\sigma}^*],$ for any $\sigma > 0,$ it follows that $\nu_{\sigma}^*, \nu_{\sigma'}^*$ are solutions to \eqref{eq: mean-field-br-opt} with parameters $\sigma, \sigma',$ respectively. Then using \eqref{eq:first-estimate-opt} yields
\begin{align*}
    \mathcal{W}_1(\nu_{\sigma}^*, \nu_{\sigma'}^*) \leq e^{-\alpha\left(1-L_{\Psi_{\sigma},U^{\xi}} \right) t}\mathcal{W}_1(\nu_{\sigma}^*, \nu_{\sigma'}^*) + |\sigma'-\sigma|\frac{L_{\Psi,\sigma,\sigma',U^{\xi}}}{1-L_{\Psi_{\sigma},U^{\xi}}}\left(1-e^{-\alpha\left(1-L_{\Psi_{\sigma},U^{\xi}} \right) t}\right).
\end{align*}
Therefore, since $L_{\Psi_{\sigma},U^{\xi}} \in (0,1),$ we obtain
\begin{equation*}
    \mathcal{W}_1(\nu_{\sigma}^*, \nu_{\sigma'}^*) \leq |\sigma - \sigma'|\frac{L_{\Psi,\sigma,\sigma',U^{\xi}}}{1-L_{\Psi_{\sigma},U^{\xi}}}.
\end{equation*}
\end{proof}
\section{Proofs of the main results in Section \ref{sec:main-minmax}}
\label{sec:proofs-min-max}
In this section we present the proofs of the results from Section \ref{sec:main-minmax}. Before starting with the proof of Theorem \ref{thm:wass-contraction-game}, we define the Best Response operators for game \eqref{eq:mean-field-min-problem}.
\begin{definition}[Best Response operators \cite{lascu-entr}]
\label{def:proximal-gibbs}
For any $\nu, \mu \in \mathcal{P}_1(\mathbb R^d)$ and any $\sigma_\nu,\sigma_\mu > 0,$ define the Best Response operators $\Psi_{\sigma_\nu}:\mathcal{P}_1(\mathbb R^d) \to \mathcal{P}_1^{\xi}(\mathbb R^d)$ and $\Phi_{\sigma_\mu}:\mathcal{P}_1(\mathbb R^d) \to \mathcal{P}_1^{\rho}(\mathbb R^d)$ by
\begin{equation}
\label{eq:proximal-gibbs-psi}
\Psi_{\sigma_\nu}[\nu, \mu](\mathrm{d}x) \coloneqq \frac{\exp\left(-\frac{1}{\sigma_\nu} \frac{\delta F}{\delta \nu}(\nu, \mu, x)\right)}{\int_{\mathbb R^d} \exp\left(-\frac{1}{\sigma_\nu} \frac{\delta F}{\delta \nu}(\nu, \mu, x')\right)\xi(\mathrm{d}x')}\xi(\mathrm{d}x),
\end{equation}
\begin{equation}
\label{eq:proximal-gibbs-phi}
\Phi_{\sigma_\mu}[\nu, \mu](\mathrm{d}y) \coloneqq \frac{\exp\left(\frac{1}{\sigma_\mu} \frac{\delta F}{\delta \mu}(\nu, \mu, y)\right)}{\int_{\mathbb R^d} \exp\left(\frac{1}{\sigma_\mu} \frac{\delta F}{\delta \mu}(\nu, \mu, y')\right)\rho(\mathrm{d}y')}\rho(\mathrm{d}y).
\end{equation}
\end{definition}
\begin{theorem}[$L^1$-Wasserstein contraction]
\label{thm:wass-contraction-game}
Let Assumptions \ref{ass:lip-flat-game}, \ref{ass:abs-cty-pi-game} hold. For any $\sigma_\nu,\sigma_\mu > 0,$ let
\begin{equation*}
    L_{\Psi_{\sigma_\nu},U^{\xi}} \coloneqq \frac{L_F}{\sigma_\nu}\exp\left(\frac{2C_F}{\sigma_\nu}\right)\left(1+\exp\left(\frac{2C_F}{\sigma_\nu}\right)\right)\int_{\mathbb R^d} |x|e^{-U^{\xi}(x)}\mathrm{d}x > 0,
\end{equation*}
\begin{equation*}
    L_{\Phi_{\sigma_\mu},U^{\rho}} \coloneqq \frac{\bar{L}_F}{\sigma_\mu}\exp\left(\frac{2\bar{C}_F}{\sigma_\mu}\right)\left(1+\exp\left(\frac{2\bar{C}_F}{\sigma_\mu}\right)\right)\int_{\mathbb R^d} |x|e^{-U^{\rho}(x)}\mathrm{d}x > 0.
\end{equation*}
Then
\begin{equation*}
   \widetilde{\mathcal{W}}_1\left(\left(\Psi_{\sigma_\nu}, \Phi_{\sigma_\mu}\right)[\nu,\mu],\left(\Psi_{\sigma_\nu}, \Phi_{\sigma_\mu}\right)[\nu',\mu']\right) \leq \left(L_{\Psi_{\sigma_\nu},U^{\xi}} + L_{\Phi_{\sigma_\mu},U^{\rho}}\right)\widetilde{\mathcal{W}}_1\left(\left(\nu,\mu\right),\left(\nu',\mu'\right)\right),
\end{equation*}
for any $(\nu,\mu), (\nu',\mu') \in \mathcal{P}_1(\mathbb R^d) \times \mathcal{P}_1(\mathbb R^d).$ If 
\begin{equation}
\label{contractivity nu}
    \sigma_\nu > 2C_F + 2e(e+1)L_F\int_{\mathbb R^d} |x|e^{-U^{\xi}(x)}\mathrm{d}x,
\end{equation} 
and 
\begin{equation}
\label{contractivity mu}
    \sigma_\mu > 2\bar{C}_F + 2e(e+1)\bar{L}_F\int_{\mathbb R^d} |y|e^{-U^{\rho}(y)}\mathrm{d}y,
\end{equation} 
then $L_{\Psi_{\sigma},U^{\xi}} + L_{\Phi_{\sigma},U^{\rho}} \in (0,1),$ hence $\left(\Psi_{\sigma_\nu}, \Phi_{\sigma_\mu} \right):\mathcal{P}_1(\mathbb R^d) \times \mathcal{P}_1(\mathbb R^d)  \to \mathcal{P}_1(\mathbb R^d) \times \mathcal{P}_1(\mathbb R^d)$ is a $L^1$-Wasserstein contraction on $\left(\mathcal{P}_1(\mathbb R^d) \times \mathcal{P}_1(\mathbb R^d),\widetilde{\mathcal{W}}_1\right).$
\end{theorem}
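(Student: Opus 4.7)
The plan is to apply essentially the same argument as in Theorem \ref{thm:wass-contraction} to each of the two Best Response maps separately, treated as functions of the pair $(\nu,\mu)$, and then sum the resulting $\mathcal{W}_1$-bounds to obtain a contraction on the product space with the modified metric $\widetilde{\mathcal{W}}_1$. The point of working with $\widetilde{\mathcal{W}}_1$ is precisely that it turns additive estimates of the two marginal Wasserstein distances into a genuine contraction statement.

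First I would fix $(\nu,\mu), (\nu',\mu') \in \mathcal{P}_1(\mathbb R^d) \times \mathcal{P}_1(\mathbb R^d)$ and bound $\mathcal{W}_1(\Psi_{\sigma_\nu}[\nu,\mu], \Psi_{\sigma_\nu}[\nu',\mu'])$ using the Kantorovich--Rubinstein duality \eqref{eq:KRW1}. Pick $\phi \in \textnormal{Lip}_1(\mathbb R^d)$, and, since the integral against a difference of probability measures is invariant under adding a constant to $\phi$, assume $\phi(0)=0$, so $|\phi(x)|\leq |x|$. Setting $h(\eta,\zeta,x) := \sigma_\nu^{-1}\frac{\delta F}{\delta \nu}(\eta,\zeta,x)$ and $Z(\eta,\zeta) := \int_{\mathbb R^d} e^{-h(\eta,\zeta,x)}\xi(\mathrm{d}x)$, I would decompose
\begin{equation*}
\frac{e^{-h(\nu,\mu,x)}}{Z(\nu,\mu)} - \frac{e^{-h(\nu',\mu',x)}}{Z(\nu',\mu')}
 = \frac{e^{-h(\nu,\mu,x)} - e^{-h(\nu',\mu',x)}}{Z(\nu,\mu)} + e^{-h(\nu',\mu',x)}\Bigl(\tfrac{1}{Z(\nu,\mu)} - \tfrac{1}{Z(\nu',\mu')}\Bigr),
\end{equation*}
and control each piece using: the $L^\infty$-bound $|h|\leq C_F/\sigma_\nu$, which yields $e^{-C_F/\sigma_\nu}\leq Z(\cdot,\cdot)\leq e^{C_F/\sigma_\nu}$; the Lipschitz bound from Assumption \ref{ass:lip-flat-game} with $x$ fixed, giving $|h(\nu,\mu,x)-h(\nu',\mu',x)|\leq \tfrac{L_F}{\sigma_\nu}(\mathcal{W}_1(\nu,\nu')+\mathcal{W}_1(\mu,\mu'))$; and the elementary estimate $|e^{-a}-e^{-b}|\leq \max(e^{-a},e^{-b})|a-b|$. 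Collecting exponentials gives a \emph{uniform} pointwise bound
\begin{equation*}
\Bigl|\tfrac{e^{-h(\nu,\mu,x)}}{Z(\nu,\mu)} - \tfrac{e^{-h(\nu',\mu',x)}}{Z(\nu',\mu')}\Bigr|
\leq \tfrac{L_F}{\sigma_\nu} e^{2C_F/\sigma_\nu}\bigl(1 + e^{2C_F/\sigma_\nu}\bigr)\bigl(\mathcal{W}_1(\nu,\nu')+\mathcal{W}_1(\mu,\mu')\bigr).
\end{equation*}
Inserting this into the integral against $\phi(x)\,\xi(\mathrm{d}x)$, using $|\phi(x)|\leq|x|$ and $\int|x|\,\xi(\mathrm{d}x)=\int|x|e^{-U^{\xi}(x)}\mathrm{d}x$, and taking the supremum over $\phi$ yields
\begin{equation*}
\mathcal{W}_1(\Psi_{\sigma_\nu}[\nu,\mu],\Psi_{\sigma_\nu}[\nu',\mu']) \leq L_{\Psi_{\sigma_\nu},U^{\xi}}\bigl(\mathcal{W}_1(\nu,\nu')+\mathcal{W}_1(\mu,\mu')\bigr).
\end{equation*}
Repeating the argument for $\Phi_{\sigma_\mu}$ (with $C_F,L_F,U^{\xi}$ replaced by $\bar C_F,\bar L_F,U^{\rho}$ and an overall sign flip that does not affect the estimates) produces the analogous bound with constant $L_{\Phi_{\sigma_\mu},U^{\rho}}$, and summing the two inequalities delivers the desired Lipschitz bound with total constant $L_{\Psi_{\sigma_\nu},U^{\xi}}+L_{\Phi_{\sigma_\mu},U^{\rho}}$.

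For the contraction, it is enough to show each of the two constants lies in $(0,1/2)$ under \eqref{contractivity nu}--\eqref{contractivity mu}. From $\sigma_\nu>2C_F$ one has $2C_F/\sigma_\nu<1$, so $e^{2C_F/\sigma_\nu}(1+e^{2C_F/\sigma_\nu})<e(e+1)$, and the second term in \eqref{contractivity nu} then forces $L_{\Psi_{\sigma_\nu},U^{\xi}}<1/2$; the analogous argument applied to \eqref{contractivity mu} gives $L_{\Phi_{\sigma_\mu},U^{\rho}}<1/2$. Hence their sum lies in $(0,1)$.

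The main obstacle is the careful bookkeeping of the compounded exponential factors arising from a Gibbs density whose potential depends simultaneously on $\nu$ and $\mu$: both the numerator and the normalizing constant must be controlled in tandem, which produces the factor $e^{2C_F/\sigma_\nu}(1+e^{2C_F/\sigma_\nu})$ and, once it is required that each marginal Lipschitz constant be below $1/2$, the factor of $2$ inflation in \eqref{contractivity nu}--\eqref{contractivity mu} relative to the single-agent threshold \eqref{eq:contractivity_condition}. Everything else is essentially a repetition of the single-agent argument, performed twice and combined additively via $\widetilde{\mathcal{W}}_1$.
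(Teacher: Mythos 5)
Your proof is correct and takes essentially the same route as the paper: Step 1 bounds the unnormalized Gibbs weight and the normalizing constant $Z$ above and below by $e^{\pm C_F/\sigma}$; Step 2 uses the same add-and-subtract decomposition together with $|e^{-a}-e^{-b}|\le \max(e^{-a},e^{-b})|a-b|$ to obtain the pointwise bound on the $\xi$-density of $\Psi_{\sigma_\nu}$, then converts that into a $\mathcal{W}_1$-estimate, and finally sums the $\Psi$- and $\Phi$-bounds to contract in $\widetilde{\mathcal{W}}_1$. The one small difference is that where the paper invokes \cite[Lemma 16]{zhenjiefict} to pass from the $L^\infty$-bound on the density ratio to the $\mathcal{W}_1$-bound, you rederive that step inline via Kantorovich--Rubinstein duality and the normalization $\phi(0)=0$; this is exactly the content of that lemma, so the arguments coincide. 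Your verification that each marginal constant drops below $1/2$ under \eqref{contractivity nu}--\eqref{contractivity mu} also matches the paper's (unstated but implicit) reasoning.
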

\begin{proof}
    The proof is an adaptation of the proof of Theorem \ref{thm:wass-contraction}. As before, we give the proof in two steps.

    \emph{Step 1:} First, we show that the the Best Response maps $\Psi_{\sigma_\nu}[\nu,\mu], \Psi_{\sigma_\mu}[\nu,\mu]$ belong to $\mathcal{P}_1(\mathbb R^d)$ and that $\frac{\mathrm{d}\Psi_{\sigma_\nu}[\nu,\mu]}{\mathrm{d}\xi}(x),\frac{\mathrm{d}\Phi_{\sigma_\mu}[\nu,\mu]}{\mathrm{d}\rho}(y)$ are uniformly bounded from below and above. From Assumption \ref{ass:lip-flat-game}, we have
\begin{equation}
\label{eq:bound-psi-unnormalized-density}
\exp \left( -\frac{1}{\sigma_\nu} C_F - U^{\xi} \left(x\right) \right) \leq \exp \left(- \frac{1}{\sigma_\nu}\frac{\delta F}{\delta \nu}(\nu,\mu,x)  - U^{\xi} \left(x\right) \right) \leq \exp \left( \frac{1}{\sigma_\nu} C_F - U^{\xi} \left(x\right) \right).
\end{equation}
Integrating over $\mathbb R^d$ and using the fact that $\int_{\mathbb R^d} e^{-U^{\xi}(x)}\mathrm{d}x = 1,$ we obtain 
\begin{equation}
\label{eq:bound-psi-normalization-constant}
\exp\left(-\frac{C_F}{\sigma_\nu}\right) \leq Z_{\sigma_\nu}(\nu,\mu) \coloneqq \int_{\mathbb R^d} \exp \left(- \frac{1}{\sigma_\nu}\frac{\delta F}{\delta \nu}(\nu,\mu,x)  - U^{\xi} \left(x\right) \right) \mathrm{d}x \leq \exp\left(\frac{C_F}{\sigma_\nu}\right).
\end{equation}
Thus, we obtain 
\begin{equation}
    \label{eq: estimate-psi}
        k_{\Psi_{\sigma_\nu}}e^{-U^{\xi}(x)} \leq \Psi_{\sigma_\nu}[\nu,\mu](x) \leq K_{\Psi_{\sigma_\nu}}e^{-U^{\xi}(x)},
    \end{equation}
with constant \(K_{\Psi_{\sigma_\nu}} = \frac{1}{k_{\Psi_{\sigma_\nu}}} = \exp \left( \frac{2C_F}{\sigma_\nu} \right)> 1\), where, by an abuse of notation, $\Psi_{\sigma_\nu}[\nu,\mu](x)$ denotes the density of $\Psi_{\sigma_\nu}[\nu,\mu]$ with respect to $\lambda$ on $\mathbb R^d.$ Moreover, by definition, 
\[
\int_{\mathbb R^d} \Psi_{\sigma_\nu}[\nu,\mu]\left(\mathrm{d}x\right) = \int_{\mathbb R^d} \Psi_{\sigma_\nu}[\nu,\mu]\left(x\right) \mathrm{d}x 
= \frac{1}{Z_{\sigma_\nu}(\nu,\mu)}\int_{\mathbb R^d}  \exp \left(- \frac{1}{\sigma_\nu} \frac{\delta F}{\delta \nu}(\nu, \mu, x) - U^{\xi}\left(x\right)\right) \mathrm{d}x = 1,
\]
and using Assumption \ref{ass:abs-cty-pi-game},
\begin{equation*}
    \int_{\mathbb R^d} |x|\Psi_{\sigma_\nu}[\nu,\mu]\left(\mathrm{d}x\right) = \int_{\mathbb R^d} |x|\Psi_{\sigma_\nu}[\nu,\mu]\left(x\right) \mathrm{d}x \leq K_{\Psi_{\sigma_\nu}}\int_{\mathbb R^d} |x|e^{-U^{\xi}(x)} \mathrm{d}x < \infty.
\end{equation*}
Therefore, \(\Psi_{\sigma_\nu}[\nu,\mu] \in \mathcal P_1\left(\mathbb R^d\right)\). One can argue similarly for $\Phi_{\sigma_\mu}[\nu, \mu]$ and obtain
\begin{equation}
    \label{eq: estimate-phi}
        k_{\Phi_{\sigma_\mu}}e^{-U^{\rho}(y)} \leq \Phi_{\sigma}[\nu,\mu](y) \leq K_{\Phi_{\sigma_\mu}}e^{-U^{\rho}(y)},
\end{equation}
with constant \(K_{\Phi_{\sigma_\mu}} = \frac{1}{k_{\Phi_{\sigma_\mu}}} = \exp \left( \frac{2\bar{C}_F}{\sigma_\mu} \right)> 1\).

\emph{Step 2:} We show that the map \(\left(\Psi_{\sigma_\nu},\Phi_{\sigma_\mu}\right) : \mathcal P_1 \left(\mathbb R^d\right) \times \mathcal P_1 \left(\mathbb R^d\right)  \to \mathcal P_1 \left(\mathbb R^d\right) \times \mathcal P_1 \left(\mathbb R^d\right)\) is $\widetilde{\mathcal{W}}_1$-Lipschitz and then that for sufficiently large $\sigma_\nu, \sigma_\mu$ it is actually a $\widetilde{\mathcal{W}}_1$-contraction. From Assumption \ref{ass:lip-flat-game} and the estimate \(\left|e^x - e^y\right| \leq e^{\max\{x,y\}} \left|x-y\right|\), we have
\begin{multline}
\label{eq:bound-diff-psi}
\left | \exp \left(- \frac{1}{\sigma_\nu} \frac{\delta F}{\delta \nu}(\nu, \mu, x) - U^{\xi} \left(x\right) \right) - \exp \left(- \frac{1}{\sigma_\nu} \frac{\delta F}{\delta \nu}(\nu', \mu', x) - U^{\xi} \left(x\right) \right) \right| \\
\leq \frac{L_F}{\sigma_\nu} \exp \left(\frac{1}{\sigma_\nu}C_F\right)e^{-U^{\xi}(x)}\left(\mathcal{W}_1(\nu,\nu') + \mathcal{W}_1(\mu,\mu')\right).
\end{multline}
Integrating the previous inequality with respect to \(x\), we obtain
\begin{equation}
\label{eq:bound-diff-psi-Z}
    \left|Z_{\sigma_\nu}(\nu,\mu) - Z_{\sigma_\nu}(\nu',\mu')\right| \leq \frac{L_F}{\sigma_\nu} \exp \left(\frac{1}{\sigma_\nu}C_F\right)\left(\mathcal{W}_1(\nu,\nu') + \mathcal{W}_1(\mu,\mu')\right).
\end{equation}
Therefore, we have that
\begin{align*}
    \Big|\Psi_{\sigma_\nu}[\nu, \mu](x) - \Psi_{\sigma_\nu}[\nu', \mu'](x)\Big| &=  \Bigg|\frac{1}{Z_{\sigma_\nu}(\nu,\mu)}\exp \left(- \frac{1}{\sigma_\nu} \frac{\delta F}{\delta \nu}(\nu, \mu, x) - U^{\xi} \left(x\right) \right)\\ 
    &- \frac{1}{Z_{\sigma_\nu}(\nu',\mu')}\exp \left(- \frac{1}{\sigma_\nu} \frac{\delta F}{\delta \nu}(\nu, \mu, x) - U^{\xi} \left(x\right) \right)\\ &+ \frac{1}{Z_{\sigma_\nu}(\nu',\mu')}\exp \left(- \frac{1}{\sigma_\nu} \frac{\delta F}{\delta \nu}(\nu, \mu, x) - U^{\xi} \left(x\right) \right)\\ &- \frac{1}{Z_{\sigma_\nu}(\nu',\mu')}\exp \left(- \frac{1}{\sigma_\nu} \frac{\delta F}{\delta \nu}(\nu', \mu', x) - U^{\xi} \left(x\right) \right)\Bigg|\\
    &\leq \exp \left(- \frac{1}{\sigma_\nu} \frac{\delta F}{\delta \nu}(\nu, \mu, x) - U^{\xi} \left(x\right) \right) \frac{\Big|Z_{\sigma_\nu}(\nu',\mu') - Z_{\sigma_\nu}(\nu,\mu)\Big|}{Z_{\sigma_\nu}(\nu,\mu)Z_{\sigma_\nu}(\nu',\mu')}\\ 
    &+ \frac{1}{Z_{\sigma_\nu}(\nu',\mu')}\Bigg|\exp \left(- \frac{1}{\sigma_\nu} \frac{\delta F}{\delta \nu}(\nu, \mu, x) - U^{\xi} \left(x\right) \right)\\ &- \exp \left(- \frac{1}{\sigma_\nu} \frac{\delta F}{\delta \nu}(\nu', \mu', x) - U^{\xi} \left(x\right) \right)\Bigg|.
\end{align*}
Using estimates \eqref{eq:bound-psi-unnormalized-density}, \eqref{eq:bound-psi-normalization-constant}, \eqref{eq:bound-diff-psi} and \eqref{eq:bound-diff-psi-Z}, we arrive at the Lipschitz property
\begin{equation}
\label{eq: Lipschitz-psi}
    \left|\Psi_{\sigma_\nu}[\nu,\mu](x) - \Psi_{\sigma_\nu}[\nu',\mu'](x)\right| \leq L_{\Psi_{\sigma_\nu}}e^{-U^{\xi}(x)}\left(\mathcal{W}_1(\nu,\nu') + \mathcal{W}_1(\mu,\mu')\right),
\end{equation}
with 
\begin{equation*}
    L_{\Psi_{\sigma_\nu}} \coloneqq \frac{L_F}{\sigma_\nu}\exp\left(\frac{2C_F}{\sigma_\nu}\right)\left(1+\exp\left(\frac{2C_F}{\sigma_\nu}\right)\right) >0.
\end{equation*} 
Proving that
\begin{equation}
\label{eq: Lipschitz-phi}
    \left|\Phi_{\sigma_\nu}[\nu, \mu](y) - \Phi_{\sigma_\nu}[\nu', \mu'](y)\right| \leq L_{\Phi_{\sigma_\nu}}e^{-U^{\rho}(y)}\left(\mathcal{W}_1(\nu,\nu') + \mathcal{W}_1(\mu,\mu')\right),
\end{equation}
follows the same steps as above but with 
\begin{equation*}
    L_{\Phi_{\sigma_\mu}} \coloneqq \frac{\bar{L}_F}{\sigma_\mu}\exp\left(\frac{2\bar{C}_F}{\sigma_\mu}\right)\left(1+\exp\left(\frac{2\bar{C}_F}{\sigma_\mu}\right)\right).
\end{equation*}
Now, applying \cite[Lemma 16]{zhenjiefict} with $p=1$ and $\mu(\mathrm{d}x)=e^{-U^{\xi}(x)}\mathrm{d}x$ gives
\begin{equation*}
    \mathcal{W}_1\left(\Psi_{\sigma_\nu}[\nu,\mu], \Psi_{\sigma_\nu}[\nu',\mu']\right) \leq \int_{\mathbb R^d} |x|e^{-U^{\xi}(x)}\mathrm{d}x \left\|\frac{\Psi_{\sigma_\nu}[\nu,\mu](\cdot)}{e^{-U^{\xi}(\cdot)}} - \frac{\Psi_{\sigma_\nu}[\nu',\mu'](\cdot)}{e^{-U^{\xi}(\cdot)}}\right\|_{L^{\infty}(\mathbb R^d)}.
\end{equation*}
Hence, using \eqref{eq: Lipschitz-psi} and setting 
\begin{equation*}
    L_{\Psi_{\sigma_\nu},U^{\xi}} \coloneqq L_{\Psi_{\sigma_\nu}}\int_{\mathbb R^d} |x|e^{-U^{\xi}(x)}\mathrm{d}x > 0
\end{equation*}
gives
\begin{equation}
\label{eq:wass-estim-1}
    \mathcal{W}_1\left(\Psi_{\sigma_\nu}[\nu,\mu], \Psi_{\sigma_\nu}[\nu',\mu']\right) \leq L_{\Psi_{\sigma_\nu},U^{\xi}}\left(\mathcal{W}_1(\nu,\nu') + \mathcal{W}_1(\mu,\mu')\right).
\end{equation}
One similarly obtains that $\Phi_{\sigma_\mu}$ is $\mathcal{W}_1$-Lipschitz, i.e.,
\begin{equation}
\label{eq:wass-estim-2}
    \mathcal{W}_1\left(\Phi_{\sigma_\mu}[\nu,\mu], \Phi_{\sigma_\mu}[\nu',\mu']\right) \leq L_{\Phi_{\sigma_\mu},U^{\rho}}\left(\mathcal{W}_1(\nu,\nu') + \mathcal{W}_1(\mu,\mu')\right),
\end{equation}
with constant 
\begin{equation*}
    L_{\Phi_{\sigma_\mu},U^{\rho}} \coloneqq L_{\Phi_{\sigma_\mu}}\int_{\mathbb R^d} |y|e^{-U^{\rho}(y)}\mathrm{d}y > 0.
\end{equation*}
Therefore, we obtain
\begin{equation*}
\begin{split}
   \mathcal{W}_1&\left(\Psi_{\sigma_\nu}[\nu,\mu], \Psi_{\sigma_\nu}[\nu',\mu']\right) + \mathcal{W}_1\left(\Phi_{\sigma_\mu}[\nu,\mu], \Phi_{\sigma_\mu}[\nu',\mu']\right) \\
   &\leq \left(L_{\Psi_{\sigma_\nu},U^{\xi}} + L_{\Phi_{\sigma_\mu},U^{\rho}}\right)\left(\mathcal{W}_1(\nu,\nu') + \mathcal{W}_1(\mu,\mu')\right),
\end{split}
\end{equation*}
and using the definition of $\widetilde{\mathcal{W}}_1$ and the notation $\left(\Psi_{\sigma_\nu}, \Phi_{\sigma_\mu}\right)[\nu,\mu] \coloneqq \left(\Psi_{\sigma_\nu}[\nu,\mu], \Phi_{\sigma_\mu}[\nu,\mu]\right)$ gives
\begin{equation*}
    \widetilde{\mathcal{W}}_1\left(\left(\Psi_{\sigma_\nu}, \Phi_{\sigma_\mu}\right)[\nu,\mu],\left(\Psi_{\sigma_\nu}, \Phi_{\sigma_\mu}\right)[\nu',\mu']\right) \leq \left(L_{\Psi_{\sigma_\nu},U^{\xi}} + L_{\Phi_{\sigma_\mu},U^{\rho}}\right)\widetilde{\mathcal{W}}_1\left(\left(\nu,\mu\right),\left(\nu',\mu'\right)\right).
\end{equation*}
If $\sigma_\nu > 2C_F + 2e(e+1)L_F\int_{\mathbb R^d} |x|e^{-U^{\xi}(x)}\mathrm{d}x$ and $\sigma_\mu > 2\bar{C}_F + 2e(e+1)\bar{L}_F\int_{\mathbb R^d} |y|e^{-U^{\rho}(y)}\mathrm{d}y,$ then immediately $L_{\Psi_{\sigma_\nu},U^{\xi}} + L_{\Phi_{\sigma_\mu},U^{\rho}} \in (0,1),$ hence $\left(\Psi_{\sigma_\nu},\Phi_{\sigma_\mu}\right):\mathcal{P}_1(\mathbb R^d) \times \mathcal{P}_1(\mathbb R^d) \to \mathcal{P}_1(\mathbb R^d) \times \mathcal{P}_1(\mathbb R^d)$ is a $\widetilde{\mathcal{W}}_1$-contraction.
\end{proof}
\begin{corollary}[Existence and uniqueness of the MNE]
\label{cor:uniqueness-MNE}
    Let Assumptions \ref{ass:lip-flat-game}, \ref{ass:abs-cty-pi-game} hold. If \eqref{contractivity nu} and \eqref{contractivity mu} hold, then \eqref{eq:mean-field-min-problem} has a unique global MNE.
\end{corollary}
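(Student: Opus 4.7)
The plan is to apply Banach's fixed-point theorem on the product Wasserstein space $(\mathcal{P}_1(\mathbb R^d) \times \mathcal{P}_1(\mathbb R^d), \widetilde{\mathcal{W}}_1)$ and then invoke Proposition \ref{prop: 2.5} together with the single-agent existence-and-uniqueness result Corollary \ref{cor:uniqueness-minimizer} to identify the resulting fixed point as the unique MNE. First, I would note that $(\mathcal{P}_1(\mathbb R^d) \times \mathcal{P}_1(\mathbb R^d), \widetilde{\mathcal{W}}_1)$ is a complete metric space, being the product of two $\mathcal{W}_1$-complete spaces under a metric equivalent to the product metric. Under \eqref{contractivity nu} and \eqref{contractivity mu}, Theorem \ref{thm:wass-contraction-game} delivers $L_{\Psi_{\sigma_\nu},U^{\xi}} + L_{\Phi_{\sigma_\mu},U^{\rho}} \in (0,1)$, so the joint Best Response operator $(\Psi_{\sigma_\nu}, \Phi_{\sigma_\mu})$ is a strict $\widetilde{\mathcal{W}}_1$-contraction. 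Banach's fixed-point theorem then yields a unique fixed point $(\nu^*, \mu^*) \in \mathcal{P}_1(\mathbb R^d) \times \mathcal{P}_1(\mathbb R^d)$.

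For the uniqueness half of the statement, Proposition \ref{prop: 2.5} asserts that any MNE of $F^{\sigma_\nu,\sigma_\mu}$ must satisfy the coupled fixed-point equations $\nu = \Psi_{\sigma_\nu}[\nu,\mu]$ and $\mu = \Phi_{\sigma_\mu}[\nu,\mu]$. Combined with the uniqueness of the joint fixed point, this forces any MNE to coincide with $(\nu^*, \mu^*)$; hence there is at most one MNE.

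For existence, I would verify that $(\nu^*, \mu^*)$ is indeed an MNE by reducing each one-sided problem to the single-agent framework of Section \ref{sec:main}. Freeze $\mu = \mu^*$ and define $F_1(\nu) := F(\nu, \mu^*)$. Assumption \ref{ass:lip-flat-game} immediately implies that $F_1$ satisfies Assumption \ref{ass:lip-flat} with the same constants $C_F, L_F$, and Assumption \ref{ass:abs-cty-pi-game} gives Assumption \ref{ass:abs-cty-pi} for $\xi$. Moreover, \eqref{contractivity nu} is strictly stronger than the single-agent condition \eqref{eq:contractivity_condition} applied to $F_1$ (the two-player condition carries an extra factor of $2$ precisely to guarantee this), so Corollary \ref{cor:uniqueness-minimizer} produces a unique minimizer of $\nu \mapsto F_1(\nu) + \sigma_\nu \operatorname{KL}(\nu|\xi) = F^{\sigma_\nu,\sigma_\mu}(\nu, \mu^*) + \sigma_\mu \operatorname{KL}(\mu^*|\rho)$, which by the first-order conditions must be a fixed point of $\Psi_{\sigma_\nu}[\cdot, \mu^*]$. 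Since $\nu^*$ is such a fixed point, $\nu^*$ is this minimizer, i.e.\ $\nu^* \in \argmin_{\nu} F^{\sigma_\nu,\sigma_\mu}(\nu, \mu^*)$. The symmetric argument, applied to $F_2(\mu) := -F(\nu^*, \mu)$ and invoking \eqref{contractivity mu}, shows that the corresponding single-agent problem admits a unique minimizer coinciding with the fixed point of $\Phi_{\sigma_\mu}[\nu^*, \cdot]$, which is $\mu^*$; hence $\mu^* \in \argmax_{\mu} F^{\sigma_\nu,\sigma_\mu}(\nu^*, \mu)$. Therefore $(\nu^*, \mu^*)$ is the unique MNE.

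The substantive analytic content is already encapsulated in Theorem \ref{thm:wass-contraction-game} and Corollary \ref{cor:uniqueness-minimizer}; the only real task is the bookkeeping check that the frozen one-sided problems inherit Assumption \ref{ass:lip-flat} and the single-agent contractivity condition, which is the point at which the slightly stronger thresholds in \eqref{contractivity nu} and \eqref{contractivity mu} (as compared with \eqref{eq:contractivity_condition}) are used.
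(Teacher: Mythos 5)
Your proposal is correct, and its core coincides with the paper's argument: under \eqref{contractivity nu} and \eqref{contractivity mu}, Theorem \ref{thm:wass-contraction-game} makes $\left(\Psi_{\sigma_\nu},\Phi_{\sigma_\mu}\right)$ a strict contraction on the complete space $\left(\mathcal{P}_1(\mathbb R^d)\times\mathcal{P}_1(\mathbb R^d),\widetilde{\mathcal{W}}_1\right)$, Banach's theorem gives a unique fixed point, and Proposition \ref{prop: 2.5} forces any MNE to be that fixed point, which is exactly how the paper proves uniqueness. Where you genuinely depart from the paper is the existence half: the paper's proof stops after the necessary-condition argument (so, read literally, it only shows that an MNE, if it exists, is the unique fixed point), whereas you verify sufficiency by freezing $\mu^*$ (resp.\ $\nu^*$), observing that $F(\cdot,\mu^*)$ (resp.\ $-F(\nu^*,\cdot)$) inherits Assumption \ref{ass:lip-flat} with constants $C_F,L_F$ (resp.\ $\bar C_F,\bar L_F$), and that the two-player thresholds \eqref{contractivity nu}--\eqref{contractivity mu}, with their extra factor $2$, imply the single-agent condition \eqref{eq:contractivity_condition} for each frozen problem, so Corollary \ref{cor:uniqueness-minimizer} and the first-order condition identify the frozen minimizers with $\nu^*$ and $\mu^*$. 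This buys a self-contained proof that the fixed point actually is an MNE, at the cost of the extra bookkeeping. One small point worth making explicit: the step ``$\nu^*$ is such a fixed point, hence $\nu^*$ is this minimizer'' needs uniqueness of the fixed point of the frozen map $\Psi_{\sigma_\nu}[\cdot,\mu^*]$; this follows from Theorem \ref{thm:wass-contraction} applied to $F(\cdot,\mu^*)$ (or from \eqref{eq:wass-estim-1} with $\mu=\mu'=\mu^*$, since $L_{\Psi_{\sigma_\nu},U^{\xi}}<1$ under \eqref{contractivity nu}), not merely from the uniqueness of the minimizer itself, so you should state that the frozen operator is itself a $\mathcal{W}_1$-contraction. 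Also, the factor $2$ in \eqref{contractivity nu}--\eqref{contractivity mu} is chosen in the paper so that $L_{\Psi_{\sigma_\nu},U^{\xi}}+L_{\Phi_{\sigma_\mu},U^{\rho}}<1$, not specifically for your freezing argument, though it does serve that purpose as well.
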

\begin{proof}
    Since $\sigma_\nu > 2C_F + 2e(e+1)L_F\int_{\mathbb R^d} |x|e^{-U^{\xi}(x)}\mathrm{d}x$ and $\sigma_\mu > 2\bar{C}_F + 2e(e+1)\bar{L}_F\int_{\mathbb R^d} |y|e^{-U^{\rho}(y)}\mathrm{d}y,$ we can apply Banach's fixed point theorem for the contraction $\left(\Psi_{\sigma_\nu},\Phi_{\sigma_\mu}\right)$ on the complete metric space $\left(\mathcal{P}_1(\mathbb R^d) \times \mathcal{P}_1(\mathbb R^d),\widetilde{\mathcal{W}}_1\right)$ and deduce that the fixed point problem $\left(\Psi_{\sigma_\nu}[\nu,\mu],\Phi_{\sigma_\mu}[\nu,\mu]\right) = (\nu,\mu)$ admits a unique solution. We also know from Proposition \ref{prop: 2.5} that any MNE $\left(\nu_{\sigma_\nu}^*,\mu_{\sigma_\mu}^*\right)\in \mathcal{P}_1(\mathbb R^d) \times \mathcal{P}_1(\mathbb R^d)$ of $F^{\sigma_\nu, \sigma_\mu}$ is equivalent to $\lambda,$ and for $\lambda$-a.a. $x,y \in \mathbb R^d,$
    \begin{equation*}
     \nu_{\sigma_\nu}^*(\mathrm{d}x) = \frac{\exp\left(-\frac{1}{\sigma_\nu} \frac{\delta F}{\delta \nu}(\nu_{\sigma_\nu}^*,\mu_{\sigma_\mu}^*, x)-U^{\xi}(x)\right)}{\int_{\mathbb R^d} \exp\left(-\frac{1}{\sigma_\nu} \frac{\delta F}{\delta \nu}(\nu_{\sigma_\nu}^*, \mu_{\sigma_\mu}^*, x')-U^{\xi}(x')\right)\mathrm{d}x'}\mathrm{d}x.
\end{equation*}
\begin{equation*}
     \mu_{\sigma_\mu}^*(\mathrm{d}y) = \frac{\exp\left(\frac{1}{\sigma_\mu} \frac{\delta F}{\delta \mu}(\nu_{\sigma_\nu}^*, \mu_{\sigma_\mu}^*, y)-U^{\rho}(y)\right)}{\int_{\mathbb R^d} \exp\left(\frac{1}{\sigma_\mu} \frac{\delta F}{\delta \mu}(\nu_{\sigma_\nu}^*, \mu_{\sigma_\mu}^*, y')-U^{\rho}(y')\right)\mathrm{d}y'}\mathrm{d}y.
\end{equation*}
    Therefore, $\left(\Psi_{\sigma_\nu}[\nu_{\sigma_\nu}^*,\mu_{\sigma_\mu}^*],\Phi_{\sigma_\mu}[\nu_{\sigma_\nu}^*,\mu_{\sigma_\mu}^*]\right) = (\nu_{\sigma_\nu}^*,\mu_{\sigma_\mu}^*)$ due to the definition of $\Psi_{\sigma_\nu},\Phi_{\sigma_\mu}.$ Since $\left(\Psi_{\sigma_\nu}[\nu,\mu],\Phi_{\sigma_\mu}[\nu,\mu]\right) = (\nu,\mu)$ admits a unique solution, it follows that $F^{\sigma_\nu,\sigma_\mu}$ has a unique MNE, that is $\left(\nu_{\sigma_\nu}^*,\mu_{\sigma_\mu}^*\right),$ and hence it is actually the global MNE.
\end{proof}
\begin{theorem}[Stability of the flow with respect to $\sigma_\nu,\sigma_\mu$ and $(\nu_0,\mu_0)$ in $\widetilde{\mathcal{W}}_1$]
\label{thm:stability}
    Let Assumptions \ref{ass:lip-flat-game}, \ref{ass:abs-cty-pi-game} hold. Let $(\nu_t,\mu_t)_{t \geq 0}, (\nu_t',\mu_t')_{t \geq 0}  \subset \mathcal{P}_{1}(\mathbb R^d)$ be the solutions of \eqref{eq: mean-field-br} with parameters $(\sigma_\nu,\sigma_\mu), (\sigma_\nu',\sigma_\mu')$ and initial conditions $(\nu_0,\mu_0), (\nu_0',\mu_0') \in \mathcal{P}_{1}^{\lambda}(\mathbb R^d),$ respectively. Let $(\nu_{\sigma_\nu}^*, \mu_{\sigma_\mu}^*)$ and $(\nu_{\sigma_\nu'}^*, \mu_{\sigma_\mu'}^*)$ be the unique MNE of \eqref{eq:mean-field-min-problem} with parameters $(\sigma_\nu,\sigma_\mu)$ and $ (\sigma_\nu',\sigma_\mu'),$ respectively. If
    \begin{equation}
    \label{contractivity nu alpha}
        \sigma_\nu > 2C_F + 2e(e+1)L_F\frac{\alpha_\nu}{\min\{\alpha_\nu,\alpha_\mu\}}\int_{\mathbb R^d} |x|e^{-U^{\xi}(x)}\mathrm{d}x,
    \end{equation} 
    and
    \begin{equation}
     \label{contractivity mu alpha}
        \sigma_\mu > 2\bar{C}_F + 2e(e+1)\bar{L}_F\frac{\alpha_\mu}{\min\{\alpha_\nu,\alpha_\mu\}}\int_{\mathbb R^d} |y|e^{-U^{\rho}(y)}\mathrm{d}y,
    \end{equation} there exists 
    \begin{equation*}
    L_{\Psi,\sigma_\nu,\sigma_\nu',U^{\xi}} \coloneqq \frac{C_F}{\sigma_\nu \sigma_\nu'} \exp \left(C_F\left(\min\{\sigma_\nu,\sigma_\nu'\} + \frac{1}{\sigma_\nu'}\right)\right)\left(1+\exp\left(\frac{2C_F}{\sigma_\nu}\right)\right)\int_{\mathbb R^d} |x|e^{-U^{\xi}(x)}\mathrm{d}x >0,
\end{equation*} 
\begin{equation*}
    L_{\Phi, \sigma_\mu, \sigma_\mu',U^{\rho}} \coloneqq \frac{\bar{C}_F}{\sigma_\mu \sigma_\mu'} \exp \left(\bar{C}_F\left(\min\{\sigma_\mu,\sigma_\mu'\} + \frac{1}{\sigma_\mu'}\right)\right)\left(1+\exp\left(\frac{2\bar{C}_F}{\sigma_\mu}\right)\right)\int_{\mathbb R^d} |y|e^{-U^{\rho}(y)}\mathrm{d}y > 0,
\end{equation*}
such that for all $t \geq 0,$
    \begin{align*}
        &\widetilde{\mathcal{W}}_1 \left( (\nu_t,\mu_t), (\nu_t',\mu_t')\right) \leq e^{-t\left(\min\{\alpha_\nu,\alpha_\mu\}-\left(\alpha_\nu L_{\Psi_{\sigma_\nu},U^{\xi}} + \alpha_\mu L_{\Phi_{\sigma_\mu},U^{\rho}}\right)\right)}\widetilde{\mathcal{W}}_1 \left( (\nu_0,\mu_0), (\nu_0',\mu_0')\right)\\ 
    &+\frac{\alpha_\nu L_{\Psi,\sigma_\nu,\sigma_\nu',U^{\xi}}|\sigma_\nu'-\sigma_\nu| + \alpha_\mu L_{\Phi,\sigma_\mu,\sigma_\mu',U^{\rho}}|\sigma_\mu'-\sigma_\mu|}{\min\{\alpha_\nu,\alpha_\mu\}-\left(\alpha_\nu L_{\Psi_{\sigma_\nu},U^{\xi}} + \alpha_\mu L_{\Phi_{\sigma_\mu},U^{\rho}}\right)}\left(1 - e^{-t\left(\min\{\alpha_\nu,\alpha_\mu\}-\left(\alpha_\nu L_{\Psi_{\sigma_\nu},U^{\xi}} + \alpha_\mu L_{\Phi_{\sigma_\mu},U^{\rho}}\right)\right)}\right),
    \end{align*}
    \begin{align*}
   &\widetilde{\mathcal{W}}_1 \left( (\nu_{\sigma_\nu}^*, \mu_{\sigma_\mu}^*), (\nu_{\sigma_\nu'}^*, \mu_{\sigma_\mu'}^*)\right)\\ 
   &\leq \frac{\alpha_\nu L_{\Psi,\sigma_\nu,\sigma_\nu',U^{\xi}}|\sigma_\nu'-\sigma_\nu| + \alpha_\mu L_{\Phi,\sigma_\mu,\sigma_\mu',U^{\rho}}|\sigma_\mu'-\sigma_\mu|}{\min\{\alpha_\nu,\alpha_\mu\}-\left(\alpha_\nu L_{\Psi_{\sigma_\nu},U^{\xi}} + \alpha_\mu L_{\Phi_{\sigma_\mu},U^{\rho}}\right)}\left(1 - e^{-t\left(\min\{\alpha_\nu,\alpha_\mu\}-\left(\alpha_\nu L_{\Psi_{\sigma_\nu},U^{\xi}} + \alpha_\mu L_{\Phi_{\sigma_\mu},U^{\rho}}\right)\right)}\right).
\end{align*}
\end{theorem}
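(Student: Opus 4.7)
My plan is to adapt the strategy used for the single-agent stability Theorem \ref{thm:stability-opt} to the coupled flow \eqref{eq: mean-field-br}, the two new ingredients being the handling of distinct learning rates $\alpha_\nu, \alpha_\mu$ in the Grönwall step and an auxiliary $\sigma$-stability estimate for each Best Response map. First I would rewrite both lines of \eqref{eq: mean-field-br} in Duhamel form,
\begin{equation*}
\nu_t = e^{-\alpha_\nu t}\nu_0 + \alpha_\nu\int_0^t e^{-\alpha_\nu (t-s)}\Psi_{\sigma_\nu}[\nu_s,\mu_s]\,\mathrm{d}s,
\end{equation*}
and analogously for $\nu_t'$, $\mu_t$, $\mu_t'$; then I would test both sides against an arbitrary $\phi\in\textnormal{Lip}_1(\mathbb{R}^d)$, subtract the primed analogue, and invoke \eqref{eq:KRW1}. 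Since the remainder on the right is independent of $\phi$, taking the supremum on the left yields
\begin{equation*}
\mathcal{W}_1(\nu_t,\nu_t') \leq e^{-\alpha_\nu t}\mathcal{W}_1(\nu_0,\nu_0') + \alpha_\nu\int_0^t e^{-\alpha_\nu(t-s)}\mathcal{W}_1\bigl(\Psi_{\sigma_\nu}[\nu_s,\mu_s],\Psi_{\sigma_\nu'}[\nu_s',\mu_s']\bigr)\,\mathrm{d}s,
\end{equation*}
together with a symmetric bound for $\mathcal{W}_1(\mu_t,\mu_t')$.

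Next, I would split the inner Wasserstein distance through the intermediate point $\Psi_{\sigma_\nu}[\nu_s',\mu_s']$, obtaining
\begin{equation*}
\mathcal{W}_1\bigl(\Psi_{\sigma_\nu}[\nu_s,\mu_s],\Psi_{\sigma_\nu'}[\nu_s',\mu_s']\bigr) \leq L_{\Psi_{\sigma_\nu},U^{\xi}}\bigl(\mathcal{W}_1(\nu_s,\nu_s')+\mathcal{W}_1(\mu_s,\mu_s')\bigr) + L_{\Psi,\sigma_\nu,\sigma_\nu',U^{\xi}}|\sigma_\nu-\sigma_\nu'|.
\end{equation*}
The first term comes from the coordinate-wise Lipschitz bound for $\Psi_{\sigma_\nu}$ that is established inside the proof of Theorem \ref{thm:wass-contraction-game}. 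The second term requires an auxiliary lemma, which I would prove by directly comparing the two Gibbs densities in \eqref{eq:proximal-gibbs-psi}, using $\bigl|\tfrac{1}{\sigma_\nu}-\tfrac{1}{\sigma_\nu'}\bigr|=\tfrac{|\sigma_\nu-\sigma_\nu'|}{\sigma_\nu\sigma_\nu'}$, the uniform bound $\bigl|\tfrac{\delta F}{\delta\nu}\bigr|\leq C_F$ from Assumption \ref{ass:lip-flat-game}, and the finite first moment of $\xi$ from Assumption \ref{ass:abs-cty-pi-game} in the KR representation of $\int |x|\,\mathrm{d}(\Psi_{\sigma_\nu}-\Psi_{\sigma_\nu'})[\nu,\mu](x)$. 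An analogous pair of estimates is established for $\Phi_{\sigma_\mu}$.

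To combine the two inequalities I would introduce the comparison functions $u_\nu(t), u_\mu(t)$ defined as the right-hand sides of the integral inequalities above; each satisfies a linear ODE such as
\begin{equation*}
u_\nu'(t) = -\alpha_\nu u_\nu(t) + \alpha_\nu L_{\Psi_{\sigma_\nu},U^{\xi}}\bigl(\mathcal{W}_1(\nu_t,\nu_t')+\mathcal{W}_1(\mu_t,\mu_t')\bigr) + \alpha_\nu L_{\Psi,\sigma_\nu,\sigma_\nu',U^{\xi}}|\sigma_\nu-\sigma_\nu'|,
\end{equation*}
and analogously for $u_\mu$. Using $\mathcal{W}_1(\nu_t,\nu_t')\leq u_\nu(t)$, $\mathcal{W}_1(\mu_t,\mu_t')\leq u_\mu(t)$, together with the key observation $\alpha_\nu u_\nu(t)+\alpha_\mu u_\mu(t)\geq \min\{\alpha_\nu,\alpha_\mu\}(u_\nu(t)+u_\mu(t))$, the sum $U(t):=u_\nu(t)+u_\mu(t)$ satisfies
\begin{equation*}
U'(t)\leq -\gamma\, U(t) + \alpha_\nu L_{\Psi,\sigma_\nu,\sigma_\nu',U^{\xi}}|\sigma_\nu-\sigma_\nu'| + \alpha_\mu L_{\Phi,\sigma_\mu,\sigma_\mu',U^{\rho}}|\sigma_\mu-\sigma_\mu'|,
\end{equation*}
with $\gamma := \min\{\alpha_\nu,\alpha_\mu\}-(\alpha_\nu L_{\Psi_{\sigma_\nu},U^{\xi}}+\alpha_\mu L_{\Phi_{\sigma_\mu},U^{\rho}})$. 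Conditions \eqref{contractivity nu alpha}, \eqref{contractivity mu alpha} are designed exactly to force $\gamma>0$, after which Grönwall's lemma produces the claimed bound on $\widetilde{\mathcal{W}}_1((\nu_t,\mu_t),(\nu_t',\mu_t'))$. The second inequality is then obtained by specializing the first estimate to initial data $(\nu_0,\mu_0)=(\nu_{\sigma_\nu}^*,\mu_{\sigma_\mu}^*)$ and $(\nu_0',\mu_0')=(\nu_{\sigma_\nu'}^*,\mu_{\sigma_\mu'}^*)$; by Proposition \ref{prop: 2.5} these are fixed points of the joint Best Response operators, so the associated trajectories are stationary and the first bound collapses to the second.

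The main obstacle I anticipate is the Grönwall step: with $\alpha_\nu\neq\alpha_\mu$ the two Duhamel kernels decay at different rates and cannot be combined by a naive addition, so the $\min\{\alpha_\nu,\alpha_\mu\}$ dissipation must be extracted through the comparison-ODE reformulation above, while the mixed weight $\alpha_\nu L_{\Psi_{\sigma_\nu},U^{\xi}}+\alpha_\mu L_{\Phi_{\sigma_\mu},U^{\rho}}$ surfaces correctly in the contraction term. A secondary delicate point is the auxiliary $\sigma$-stability lemma: $\Psi_\sigma[\nu,\mu]$ depends on $\sigma$ only through the inverse-temperature factor in its Gibbs density, and bounding the resulting Wasserstein error in $\sigma$ at the rate $|\sigma_\nu-\sigma_\nu'|$ essentially requires the finite first moment of $\xi$, which explains the appearance of $\int|x|e^{-U^{\xi}(x)}\,\mathrm{d}x$ in the formula for $L_{\Psi,\sigma_\nu,\sigma_\nu',U^{\xi}}$.
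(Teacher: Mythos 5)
Your proposal is correct and follows essentially the same route as the paper: a Duhamel representation tested against $\textnormal{Lip}_1$ functions via \eqref{eq:KRW1}, a triangle-inequality split through $\Psi_{\sigma_\nu}[\nu_s',\mu_s']$ combined with the coordinate-wise Lipschitz bounds from the proof of Theorem \ref{thm:wass-contraction-game} and a $\sigma$-Lipschitz estimate for the Gibbs densities (the paper's Step 1, proved exactly as you sketch), followed by a Gr\"onwall argument with rate $\min\{\alpha_\nu,\alpha_\mu\}-\left(\alpha_\nu L_{\Psi_{\sigma_\nu},U^{\xi}}+\alpha_\mu L_{\Phi_{\sigma_\mu},U^{\rho}}\right)$ and the observation that the MNEs are stationary solutions by Proposition \ref{prop: 2.5}. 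Your comparison-ODE reformulation of the Gr\"onwall step is only a cosmetic variant of the paper's direct addition of the two mild-form inequalities using $e^{-\alpha_\nu(t-s)},e^{-\alpha_\mu(t-s)}\leq e^{-\min\{\alpha_\nu,\alpha_\mu\}(t-s)}$, and yields the same bound.
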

\begin{proof}
\emph{Step 1:} First, we show that the maps $\Psi_{\sigma_\nu}, \Phi_{\sigma_\mu}$ are $\mathcal{W}_1$-Lipschitz with respect to $\sigma_\nu, \sigma_\mu,$ respectively. From Assumption \ref{ass:lip-flat-game} and the estimate \(\left|e^x - e^y\right| \leq e^{\max\{x,y\}} \left|x-y\right|\), we have
\begin{multline}
\label{eq:bound-diff-psi-1}
\left | \exp \left(- \frac{1}{\sigma_\nu} \frac{\delta F}{\delta \nu}(\nu, \mu, x) - U^{\xi} \left(x\right) \right) - \exp \left(- \frac{1}{\sigma_\nu'} \frac{\delta F}{\delta \nu}(\nu, \mu, x) - U^{\xi} \left(x\right) \right) \right| \\
\leq C_F\frac{|\sigma_\nu - \sigma_\nu'|}{\sigma_\nu \sigma_\nu'} \exp \left(C_F\min\{\sigma_\nu,\sigma_\nu'\}\right)e^{-U^{\xi}(x)}.
\end{multline}
Integrating the previous inequality with respect to \(x\), we obtain
\begin{equation}
\label{eq:bound-diff-psi-Z-1}
    \left|Z_{\sigma_\nu}(\nu, \mu) - Z_{\sigma_\nu'}(\nu, \mu)\right| \leq C_F\frac{|\sigma_\nu - \sigma_\nu'|}{\sigma_\nu \sigma_\nu'} \exp \left(C_F\min\{\sigma_\nu,\sigma_\nu'\}\right).
\end{equation}
Therefore, we have that
\begin{align*}
    \Big|\Psi_{\sigma_\nu}[\nu, \mu](x) - \Psi_{\sigma_\nu'}[\nu, \mu](x)\Big| &=  \Bigg|\frac{1}{Z_{\sigma_\nu}(\nu, \mu)}\exp \left(- \frac{1}{\sigma_\nu} \frac{\delta F}{\delta \nu}(\nu, \mu, x) - U^{\xi} \left(x\right) \right)\\ 
    &- \frac{1}{Z_{\sigma_\nu'}(\nu, \mu)}\exp \left(- \frac{1}{\sigma_\nu} \frac{\delta F}{\delta \nu}(\nu, \mu, x) - U^{\xi} \left(x\right) \right)\\ &+ \frac{1}{Z_{\sigma_\nu'}(\nu, \mu)}\exp \left(- \frac{1}{\sigma_\nu} \frac{\delta F}{\delta \nu}(\nu, \mu, x) - U^{\xi} \left(x\right) \right)\\ &- \frac{1}{Z_{\sigma_\nu'}(\nu, \mu)}\exp \left(- \frac{1}{\sigma_\nu'} \frac{\delta F}{\delta \nu}(\nu, \mu, x) - U^{\xi} \left(x\right) \right)\Bigg|\\
    &\leq \exp \left(- \frac{1}{\sigma_\nu} \frac{\delta F}{\delta \nu}(\nu, \mu, x) - U^{\xi} \left(x\right) \right) \frac{\Big|Z_{\sigma_\nu'}(\nu, \mu) - Z_{\sigma_\nu}(\nu, \mu)\Big|}{Z_{\sigma_\nu}(\nu, \mu)Z_{\sigma_\nu'}(\nu, \mu)}\\ 
    &+ \frac{1}{Z_{\sigma_\nu'}(\nu, \mu)}\Bigg|\exp \left(- \frac{1}{\sigma_\nu} \frac{\delta F}{\delta \nu}(\nu, \mu, x) - U^{\xi} \left(x\right) \right)\\ &- \exp \left(- \frac{1}{\sigma_\nu'} \frac{\delta F}{\delta \nu}(\nu, \mu, x) - U^{\xi} \left(x\right) \right)\Bigg|.
\end{align*}
Using estimates \eqref{eq:bound-psi-unnormalized-density}, \eqref{eq:bound-psi-normalization-constant}, \eqref{eq:bound-diff-psi-1} and \eqref{eq:bound-diff-psi-Z-1}, we arrive at 
\begin{equation}
\label{eq: Lipschitz-psi-1}
    \left|\Psi_{\sigma_\nu}[\nu,\mu](x) - \Psi_{\sigma_\nu'}[\nu,\mu](x)\right| \leq L_{\Psi,\sigma_\nu,\sigma_\nu'}|\sigma_\nu - \sigma_\nu'|e^{-U^{\xi}(x)},
\end{equation}
with 
\begin{equation*}
    L_{\Psi,\sigma_\nu,\sigma_\nu'} \coloneqq \frac{C_F}{\sigma_\nu \sigma_\nu'} \exp \left(C_F\left(\min\{\sigma_\nu,\sigma_\nu'\} + \frac{1}{\sigma_\nu'}\right)\right)\left(1+\exp\left(\frac{2C_F}{\sigma_\nu}\right)\right) >0.
\end{equation*} 
Proving 
\begin{equation}
\label{eq: Lipschitz-phi-1}
    \left|\Phi_{\sigma_\mu}[\nu, \mu](y) - \Phi_{\sigma_\mu'}[\nu, \mu](y)\right| \leq L_{\Phi,\sigma_\mu,\sigma_\mu'}|\sigma_\mu - \sigma_\mu'|e^{-U^{\rho}(y)}
\end{equation}
follows the same steps as above but with 
\begin{equation*}
    L_{\Phi, \sigma_\mu, \sigma_\mu'} \coloneqq \frac{\bar{C}_F}{\sigma_\mu \sigma_\mu'} \exp \left(\bar{C}_F\left(\min\{\sigma_\mu,\sigma_\mu'\} + \frac{1}{\sigma_\mu'}\right)\right)\left(1+\exp\left(\frac{2\bar{C}_F}{\sigma_\mu}\right)\right).
\end{equation*}
Now, applying \cite[Lemma 16]{zhenjiefict} with $p=1$ and $\mu(\mathrm{d}x)=e^{-U^{\xi}(x)}\mathrm{d}x$ gives
\begin{equation*}
    \mathcal{W}_1\left(\Psi_{\sigma_\nu}[\nu,\mu], \Psi_{\sigma_\nu'}[\nu,\mu]\right) \leq \int_{\mathbb R^d} |x|e^{-U^{\xi}(x)}\mathrm{d}x \left\|\frac{\Psi_{\sigma_\nu}[\nu,\mu](\cdot)}{e^{-U^{\xi}(\cdot)}} - \frac{\Psi_{\sigma_\nu'}[\nu,\mu](\cdot)}{e^{-U^{\xi}(\cdot)}}\right\|_{L^{\infty}(\mathbb R^d)}.
\end{equation*}
Hence, using \eqref{eq: Lipschitz-psi-1} and setting 
\begin{equation*}
    L_{\Psi,\sigma_\nu,\sigma_\nu',U^{\xi}} \coloneqq L_{\Psi,\sigma_\nu,\sigma_\nu'}\int_{\mathbb R^d} |x|e^{-U^{\xi}(x)}\mathrm{d}x > 0
\end{equation*}
gives
\begin{equation*}
    \mathcal{W}_1\left(\Psi_{\sigma_\nu}[\nu,\mu], \Psi_{\sigma_\nu'}[\nu,\mu]\right) \leq L_{\Psi,\sigma_\nu,\sigma_\nu',U^{\xi}}|\sigma_\nu - \sigma_\nu'|.
\end{equation*}
We similarly obtain
\begin{equation*}
    \mathcal{W}_1\left(\Phi_{\sigma_\mu}[\nu,\mu], \Phi_{\sigma_\mu'}[\nu,\mu]\right) \leq L_{\Phi,\sigma_\mu,\sigma_\mu',U^{\rho}}|\sigma_\mu - \sigma_\mu'|
\end{equation*} 
with constant 
\begin{equation*}
    L_{\Phi,\sigma_\mu,\sigma_\mu',U^{\rho}} \coloneqq L_{\Phi,\sigma_\mu,\sigma_\mu'}\int_{\mathbb R^d} |y|e^{-U^{\rho}(y)}\mathrm{d}y > 0.
\end{equation*}
\emph{Step 2:} Now, we prove the stability of the solution to \eqref{eq: mean-field-br} with respect to $\sigma_\nu, \sigma_\mu$ and $(\nu_0,\mu_0).$ Since $(\nu_0,\mu_0), (\nu_0',\mu_0') \in \mathcal{P}_{1}^{\lambda}(\mathbb R^d) \times \mathcal{P}_{1}^{\lambda}(\mathbb R^d),$ it follows by Theorem \ref{thm:existence} that $(\nu_t,\mu_t)_{t \geq 0}, (\nu_t',\mu_t')_{t \geq 0}  \subset \mathcal{P}_{1}^{\lambda}(\mathbb R^d) \times \mathcal{P}_{1}^{\lambda}(\mathbb R^d).$ Let $\psi \in \textnormal{Lip}_1(\mathbb R^d).$ Then we have that
\begin{multline*}
\begin{aligned}
    &\int_{\mathbb R^d}\psi(x)\left(\nu_t(x)-\nu_t'(x)\right)\mathrm{d}x\\ 
    &=\int_{\mathbb R^d} \psi(x)\Bigg(e^{-\alpha_\nu t}\nu_0(x) + \int_0^t \alpha_\nu e^{-\alpha_\nu(t-s)} \Psi_{\sigma_\nu}\left[\nu_s,\mu_s\right](x) \mathrm{d}s\\ &- e^{-\alpha_\nu t}\nu_0'(x) - \int_0^t \alpha_\nu e^{-\alpha_\nu(t-s)}\Psi_{\sigma_\nu'}[\nu_s',\mu_s'](x)\mathrm{d}s\Bigg)\mathrm{d}x\\
    &=\int_{\mathbb R^d} \psi(x)\left(e^{-\alpha_\nu t}\left(\nu_0(x) - \nu_0'(x)\right) + \int_0^t \alpha_\nu e^{-\alpha_\nu(t-s)} \left(\Psi_{\sigma_\nu}\left[\nu_s,\mu_s\right](x) - \Psi_{\sigma_\nu'}\left[\nu_s',\mu_s'\right](x)\right) \mathrm{d}s\right)\mathrm{d}x\\
    &= e^{-\alpha_\nu t}\int_{\mathbb R^d} \psi(x)\left(\nu_0(x) - \nu_0'(x)\right)\mathrm{d}x\\ 
    &+ \int_{\mathbb R^d} \psi(x) \int_0^t \alpha_\nu e^{-\alpha_\nu(t-s)} \left(\Psi_{\sigma_\nu}\left[\nu_s,\mu_s\right](x) - \Psi_{\sigma_\nu'}\left[\nu_s',\mu_s'\right](x)\right) \mathrm{d}s\mathrm{d}x\\
    &= e^{-\alpha_\nu t}\int_{\mathbb R^d} \psi(x)\left(\nu_0(x) - \nu_0'(x)\right)\mathrm{d}x\\ 
    &+ \int_0^t \alpha_\nu e^{-\alpha_\nu(t-s)} \int_{\mathbb R^d} \psi(x)\left(\Psi_{\sigma_\nu}\left[\nu_s,\mu_s\right](x) - \Psi_{\sigma_\nu'}\left[\nu_s',\mu_s'\right](x)\right)\mathrm{d}x\mathrm{d}s\\
    &\leq e^{-\alpha_\nu t}\mathcal{W}_1(\nu_0, \nu_0') + \int_0^t \alpha_\nu e^{-\alpha_\nu(t-s)} \mathcal{W}_1\left(\Psi_{\sigma_\nu}\left[\nu_s,\mu_s\right], \Psi_{\sigma_\nu'}\left[\nu_s',\mu_s'\right]\right)\mathrm{d}s\\
    &\leq e^{-\alpha_\nu t}\mathcal{W}_1(\nu_0, \nu_0') + \int_0^t \alpha_\nu e^{-\alpha_\nu(t-s)} \Big(\mathcal{W}_1\left(\Psi_{\sigma_\nu}\left[\nu_s,\mu_s\right], \Psi_{\sigma_\nu}\left[\nu_s',\mu_s'\right]\right)\\ 
    &+ \mathcal{W}_1\left(\Psi_{\sigma_\nu}\left[\nu_s',\mu_s'\right], \Psi_{\sigma_\nu'}\left[\nu_s',\mu_s'\right]\right)\Big)\mathrm{d}s\\
    &\leq e^{-\alpha_\nu t}\mathcal{W}_1(\nu_0, \nu_0') + \int_0^t \alpha_\nu e^{-\alpha_\nu(t-s)} \left(\mathcal{W}_1\left(\Psi_{\sigma_\nu}\left[\nu_s,\mu_s\right], \Psi_{\sigma_\nu}\left[\nu_s',\mu_s'\right]\right) + L_{\Psi,\sigma_\nu,\sigma_\nu',U^{\xi}}|\sigma_\nu-\sigma_\nu'| \right)\mathrm{d}s\\
    &\leq e^{-\alpha_\nu t}\mathcal{W}_1(\nu_0, \nu_0') + \int_0^t \alpha_\nu e^{-\alpha_\nu(t-s)}\left(L_{\Psi_{\sigma_\nu},U^{\xi}}\left(\mathcal{W}_1(\nu_s, \nu_s') + \mathcal{W}_1(\mu_s, \mu_s')\right) + L_{\Psi,\sigma_\nu,\sigma_\nu',U^{\xi}}|\sigma_\nu-\sigma_\nu'| \right)\mathrm{d}s,
\end{aligned}
\end{multline*}
where in the last equality we used Fubini's theorem, in the first inequality we used the definition of $\mathcal{W}_1$ and the last three inequalities follow from the triangle inequality, \emph{Step 1} and \eqref{eq:wass-estim-1}, respectively. Taking supremum over $\psi$ gives
\begin{align*}
\mathcal{W}_1(\nu_t,\nu_t') &\leq e^{-\alpha_\nu t}\mathcal{W}_1(\nu_0, \nu_0')\\ 
&+ \int_0^t \alpha_\nu e^{-\alpha_\nu(t-s)}\left(L_{\Psi_{\sigma_\nu},U^{\xi}}\left(\mathcal{W}_1(\nu_s, \nu_s') + \mathcal{W}_1(\mu_s, \mu_s')\right) + L_{\Psi,\sigma_\nu,\sigma_\nu',U^{\xi}}|\sigma_\nu-\sigma_\nu'| \right)\mathrm{d}s.  
\end{align*}
Similarly using \eqref{eq:wass-estim-2}, we obtain
\begin{align*}
    \mathcal{W}_1(\mu_t,\mu_t') &\leq e^{-\alpha_\mu t}\mathcal{W}_1(\mu_0, \mu_0')\\ &+ \int_0^t \alpha_\mu e^{-\alpha_\mu(t-s)} \left(L_{\Phi_{\sigma_\mu},U^{\rho}}\left(\mathcal{W}_1(\nu_s, \nu_s') + \mathcal{W}_1(\mu_s, \mu_s')\right) + L_{\Phi,\sigma_\mu,\sigma_\mu',U^{\rho}}|\sigma_\mu-\sigma_\mu'|\right)\mathrm{d}s.  
\end{align*}
Using the bound $\alpha_\nu, \alpha_\mu \geq \min\{\alpha_\nu, \alpha_\mu\}$ we can add the previous two inequalities, and using the definition of $\widetilde{\mathcal{W}}_1$ and the notation $\left(\Psi_{\sigma_\nu}, \Phi_{\sigma_\mu}\right)[\nu,\mu] \coloneqq \left(\Psi_{\sigma_\nu}[\nu,\mu], \Phi_{\sigma_\mu}[\nu,\mu]\right)$ we obtain
\begin{align*}
    &\widetilde{\mathcal{W}}_1 \left( (\nu_t,\mu_t), (\nu_t',\mu_t')\right) \leq e^{-\min\{\alpha_\nu, \alpha_\mu\}t}\widetilde{\mathcal{W}}_1 \left( (\nu_0,\mu_0), (\nu_0',\mu_0')\right)\\ &+ \int_0^t e^{-\min\{\alpha_\nu, \alpha_\mu\}(t-s)} \left(\alpha_\nu L_{\Psi_{\sigma_\nu},U^{\xi}} + \alpha_\mu L_{\Phi_{\sigma_\mu},U^{\rho}}\right)\widetilde{\mathcal{W}}_1 \left( (\nu_s,\mu_s), (\nu_s',\mu_s')\right)\mathrm{d}s\\
    &+ \int_0^t e^{-\min\{\alpha_\nu, \alpha_\mu\}(t-s)}\left(\alpha_\nu L_{\Psi,\sigma_\nu,\sigma_\nu',U^{\xi}}|\sigma_\nu-\sigma_\nu'| + \alpha_\mu L_{\Phi,\sigma_\mu,\sigma_\mu',U^{\rho}}|\sigma_\mu-\sigma_\mu'|\right)\mathrm{d}s.
\end{align*}
Since $\sigma_\nu > 2C_F + 2e(e+1)L_F\frac{\alpha_\nu}{\min\{\alpha_\nu,\alpha_\mu\}}\int_{\mathbb R^d} |x|e^{-U^{\xi}(x)}\mathrm{d}x$\\ and $\sigma_\mu > 2\bar{C}_F + 2e(e+1)\bar{L}_F\frac{\alpha_\mu}{\min\{\alpha_\nu,\alpha_\mu\}}\int_{\mathbb R^d} |y|e^{-U^{\rho}(y)}\mathrm{d}y,$ it follows that $\alpha_\nu L_{\Psi_{\sigma_\nu},U^{\xi}} + \alpha_\mu L_{\Phi_{\sigma_\mu},U^{\rho}}<\min\{\alpha_\nu,\alpha_\mu\}.$ 

Therefore, applying Gronwall's lemma to the function $t \mapsto e^{\min\{\alpha_\nu,\alpha_\mu\} t}\widetilde{\mathcal{W}}_1 \left( (\nu_t,\mu_t), (\nu_t',\mu_t')\right)$ yields
\begin{multline}
\begin{aligned}
\label{eq:first-estimate}
    &\widetilde{\mathcal{W}}_1 \left( (\nu_t,\mu_t), (\nu_t',\mu_t')\right) \leq e^{-t\left(\min\{\alpha_\nu,\alpha_\mu\}-\left(\alpha_\nu L_{\Psi_{\sigma_\nu},U^{\xi}} + \alpha_\mu L_{\Phi_{\sigma_\mu},U^{\rho}}\right)\right)}\widetilde{\mathcal{W}}_1 \left( (\nu_0,\mu_0), (\nu_0',\mu_0')\right)\\ 
    &+ \frac{\alpha_\nu L_{\Psi,\sigma_\nu,\sigma_\nu',U^{\xi}}|\sigma_\nu-\sigma_\nu'| + \alpha_\mu L_{\Phi,\sigma_\mu,\sigma_\mu',U^{\rho}}|\sigma_\mu-\sigma_\mu'|}{\min\{\alpha_\nu,\alpha_\mu\}-\left(\alpha_\nu L_{\Psi_{\sigma_\nu},U^{\xi}} + \alpha_\mu L_{\Phi_{\sigma_\mu},U^{\rho}}\right)}\left(1 - e^{-t\left(\min\{\alpha_\nu,\alpha_\mu\}-\left(\alpha_\nu L_{\Psi_{\sigma_\nu},U^{\xi}} + \alpha_\mu L_{\Phi_{\sigma_\mu},U^{\rho}}\right)\right)}\right).
\end{aligned}
\end{multline}
Since $\alpha_\nu, \alpha_\mu \geq \min\{\alpha_\nu,\alpha_\mu\},$ it follows that the lower bounds for $\sigma_\nu, \sigma_\mu$ in Corollary \ref{cor:uniqueness-MNE} hold, hence $(\nu_{\sigma_\nu}^*, \mu_{\sigma_\mu}^*), (\nu_{\sigma_\nu'}^*, \mu_{\sigma_\mu'}^*)$ are the MNE of \eqref{eq:mean-field-min-problem} with parameters $(\sigma_\nu,\sigma_\mu), (\sigma_\nu',\sigma_\mu'),$ respectively. Moreover, since $\nu_{\sigma_\nu}^* = \Psi_{\sigma_\nu}[\nu_{\sigma_\nu}^*, \mu_{\sigma_\mu}^*],$ $\mu_{\sigma_\mu}^* = \Phi_{\sigma_\mu}[\nu_{\sigma_\nu}^*, \mu_{\sigma_\mu}^*],$ we deduce that $(\nu_{\sigma_\nu}^*, \mu_{\sigma_\mu}^*), (\nu_{\sigma_\nu'}^*, \mu_{\sigma_\mu'}^*)$ are solutions to \eqref{eq: mean-field-br} with parameters $(\sigma_\nu,\sigma_\mu), (\sigma_\nu',\sigma_\mu'),$ respectively. respectively. Then using \eqref{eq:first-estimate} yields
\begin{align*}
     &\widetilde{\mathcal{W}}_1 \left( (\nu_{\sigma_\nu}^*, \mu_{\sigma_\mu}^*), (\nu_{\sigma_\nu'}^*, \mu_{\sigma_\mu'}^*)\right) \leq e^{-t\left(\min\{\alpha_\nu,\alpha_\mu\}-\left(\alpha_\nu L_{\Psi_{\sigma_\nu},U^{\xi}} + \alpha_\mu L_{\Phi_{\sigma_\mu},U^{\rho}}\right)\right)}\widetilde{\mathcal{W}}_1 \left( (\nu_{\sigma_\nu}^*, \mu_{\sigma_\mu}^*), (\nu_{\sigma_\nu'}^*, \mu_{\sigma_\mu'}^*)\right) \\ &+ \frac{\alpha_\nu L_{\Psi,\sigma_\nu,\sigma_\nu',U^{\xi}}|\sigma_\nu-\sigma_\nu'| + \alpha_\mu L_{\Phi,\sigma_\mu,\sigma_\mu',U^{\rho}}|\sigma_\mu-\sigma_\mu'|}{\min\{\alpha_\nu,\alpha_\mu\}-\left(\alpha_\nu L_{\Psi_{\sigma_\nu},U^{\xi}} + \alpha_\mu L_{\Phi_{\sigma_\mu},U^{\rho}}\right)}\left(1 - e^{-t\left(\min\{\alpha_\nu,\alpha_\mu\}-\left(\alpha_\nu L_{\Psi_{\sigma_\nu},U^{\xi}} + \alpha_\mu L_{\Phi_{\sigma_\mu},U^{\rho}}\right)\right)}\right).
\end{align*}
Therefore, since $\min\{\alpha_\nu,\alpha_\mu\}-\left(\alpha_\nu L_{\Psi_{\sigma_\nu},U^{\xi}} + \alpha_\mu L_{\Phi_{\sigma_\mu},U^{\rho}}\right) > 0,$ we obtain
\begin{align*}
   &\widetilde{\mathcal{W}}_1 \left( (\nu_{\sigma_\nu}^*, \mu_{\sigma_\mu}^*), (\nu_{\sigma_\nu'}^*, \mu_{\sigma_\mu'}^*)\right)\\ 
   &\leq \frac{\alpha_\nu L_{\Psi,\sigma_\nu,\sigma_\nu',U^{\xi}}|\sigma_\nu-\sigma_\nu'| + \alpha_\mu L_{\Phi,\sigma_\mu,\sigma_\mu',U^{\rho}}|\sigma_\mu-\sigma_\mu'|}{\min\{\alpha_\nu,\alpha_\mu\}-\left(\alpha_\nu L_{\Psi_{\sigma_\nu},U^{\xi}} + \alpha_\mu L_{\Phi_{\sigma_\mu},U^{\rho}}\right)}\left(1 - e^{-t\left(\min\{\alpha_\nu,\alpha_\mu\}-\left(\alpha_\nu L_{\Psi_{\sigma_\nu},U^{\xi}} + \alpha_\mu L_{\Phi_{\sigma_\mu},U^{\rho}}\right)\right)}\right).
\end{align*}
    
\end{proof}
\section{Auxiliary results for the Best Response flow}
\label{sec:AppA}
\subsection{Single-agent optimization}

We first recall a necessary condition for the optimality of local minimizers of \eqref{eq:mean-field-min-problem-opt}.
\begin{proposition}[Necessary condition for optimality {\cite[Proposition 2.5]{10.1214/20-AIHP1140}}]
\label{prop:optimality}
    Let Assumptions \ref{ass:lip-flat}, \ref{ass:abs-cty-pi} hold. If $\nu_{\sigma}^* \in \mathcal{P}_1(\mathbb R^d)$ is a minimizer of $F^{\sigma},$ then $\nu_{\sigma}^*$ is equivalent to $\lambda,$ and for $\lambda$-a.a. $x \in \mathbb R^d,$
    \begin{equation*}
        \nu_{\sigma}^*(\mathrm{d}x) = \frac{e^{-\frac{1}{\sigma} \frac{\delta F}{\delta \nu}(\nu_{\sigma}^*, x)-U^{\xi}(x)}}{\int_{\mathbb R^d} e^{-\frac{1}{\sigma} \frac{\delta F}{\delta \nu}(\nu_{\sigma}^*, x')-U^{\xi}(x')}\mathrm{d}x'}\mathrm{d}x.
    \end{equation*}
\end{proposition}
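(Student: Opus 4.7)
The plan is to derive the asserted Gibbs form from the first-order optimality condition via a standard convex-combination perturbation argument. I would begin by observing that any minimizer $\nu_\sigma^*$ must satisfy $\operatorname{KL}(\nu_\sigma^* | \xi) < \infty$: under Assumption~\ref{ass:lip-flat} the boundedness $|\delta F/\delta\nu| \le C_F$ implies that $F$ is bounded on $\mathcal{P}_1(\mathbb R^d)$, and $F^\sigma(\xi) < \infty$, so the infimum is finite and the entropy term must be finite as well. Hence $\nu_\sigma^* \ll \xi$; writing $p := d\nu_\sigma^*/d\xi$, the goal reduces to identifying $p$ with the claimed exponential density.

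Next, for arbitrary $\nu \in \mathcal{P}_1^\xi(\mathbb R^d)$ with $\operatorname{KL}(\nu|\xi) < \infty$, consider the convex perturbation $\nu_\epsilon := (1-\epsilon)\nu_\sigma^* + \epsilon\nu$ and use minimality to obtain $\liminf_{\epsilon \downarrow 0}\epsilon^{-1}(F^\sigma(\nu_\epsilon)-F^\sigma(\nu_\sigma^*))\ge 0$. The $F$-contribution converges by Definition~\ref{def:flat-derivative} to $\int (\delta F/\delta\nu)(\nu_\sigma^*,x)(\nu-\nu_\sigma^*)(dx)$, which is finite by the $C_F$-bound. For the entropy term, convexity of $r\mapsto r\log r$ ensures that $\epsilon \mapsto \operatorname{KL}(\nu_\epsilon|\xi)$ is convex, and a dominated-convergence argument yields the directional derivative $\int \log p(x)(\nu-\nu_\sigma^*)(dx)$. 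Combining, for all such $\nu$,
\begin{equation*}
\int_{\mathbb R^d}\!\bigl[\tfrac{1}{\sigma}\tfrac{\delta F}{\delta\nu}(\nu_\sigma^*,x) + \log p(x)\bigr](\nu-\nu_\sigma^*)(dx) \;\ge\; 0.
\end{equation*}
Restricting to $\nu \ll \nu_\sigma^*$ with bounded Radon--Nikodym derivative (so that the reverse perturbation $(1+\eta)\nu_\sigma^* - \eta\nu$ remains a probability measure for small $\eta>0$) and running perturbations in both directions promotes the inequality to an equality, forcing $h(x) := \tfrac{1}{\sigma}(\delta F/\delta\nu)(\nu_\sigma^*,x) + \log p(x)$ to equal a constant $c$ $\nu_\sigma^*$-almost surely. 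Testing the original inequality with $\nu = \xi$ rules out $\xi(\{p=0\})>0$ (on which $\log p = -\infty$ would contradict $\int h\,d\xi \ge c$), so $p > 0$ $\xi$-a.s.\ and therefore $\nu_\sigma^* \sim \xi \sim \lambda$ (using Assumption~\ref{ass:abs-cty-pi}). Exponentiating $h \equiv c$ and fixing $c$ by the normalization $\int p\,d\xi = 1$ produces the stated Gibbs formula.

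The main obstacle I anticipate is the careful justification of the entropy derivative and the measure-theoretic promotion from an integral inequality to a pointwise identity. One must supply a dominating function for the incremental quotients of $r\mapsto r\log r$ along the path $\nu_\epsilon$ (using the integrability of $\log p$ against both $\nu_\sigma^*$ and $\nu$, guaranteed by the finite-KL restriction), verify that the class of admissible test measures is rich enough to pin down $h$ $\xi$-almost everywhere, and carefully handle the set $\{p=0\}$. These are precisely the technical points settled in \cite[Proposition 2.5]{10.1214/20-AIHP1140} in the $\mathcal{P}_2$ setting; under our Assumptions~\ref{ass:lip-flat}--\ref{ass:abs-cty-pi} the boundedness of $\delta F/\delta\nu$ and the linear growth of $U^\xi$ ensure that the argument transfers verbatim to $\mathcal{P}_1(\mathbb R^d)$.
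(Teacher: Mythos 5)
The paper itself gives no proof of this proposition --- it is imported verbatim, with citation, from \cite[Proposition 2.5]{10.1214/20-AIHP1140} --- and your sketch correctly reconstructs essentially that reference's argument: finiteness of the entropy at the minimizer, the linear-perturbation first-order inequality using the flat derivative and the directional derivative of $\operatorname{KL}(\cdot|\xi)$, the two-sided perturbations over measures with bounded density with respect to $\nu_\sigma^*$ to force $\tfrac{1}{\sigma}\tfrac{\delta F}{\delta\nu}(\nu_\sigma^*,\cdot)+\log p$ to be constant, and the exclusion of the set $\{p=0\}$ (e.g.\ by testing with $\nu=\xi$), which yields equivalence with $\xi$ and hence with $\lambda$ under Assumption \ref{ass:abs-cty-pi}. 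The only refinement worth recording is that the entropy difference quotients are most cleanly controlled by convexity/monotonicity of $r\mapsto r\log r$ (monotone convergence, with the limit allowed to be $-\infty$ on $\{p=0\}$, which is exactly what eliminates that set) rather than by exhibiting a dominating function, but this is a presentational point and does not affect the validity of your plan.
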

Next we give the proof of the existence and uniqueness of the flow \eqref{eq: mean-field-br-opt}. While the proof is skipped in \cite[Proposition 7]{zhenjiefict}, we present it by following a classical Picard iteration technique.
\begin{proposition}[Existence and uniqueness of the flow]
\label{thm:existence-opt}
    Let Assumptions \ref{ass:lip-flat}, \ref{ass:abs-cty-pi} hold and let $\nu_0 \in \mathcal{P}_1(\mathbb R^d).$ Then there exists a unique solution $(\nu_t)_{t \geq 0} \in C\left([0,\infty];\mathcal{P}_1(\mathbb R^d)\right)$ to \eqref{eq: mean-field-br-opt} and the solution depends continuously on the initial condition. Moreover, if $\nu_0 \in \mathcal{P}^{\lambda}_1(\mathbb R^d),$ then $(\nu_t)_{t \geq 0}$ admits the density $(\nu_t(\cdot))_{t \geq 0} \subset \mathcal{P}_{1}^{\lambda}(\mathbb R^d)$ such that, for every $x \in \mathbb R^d,$ it holds that $t \mapsto \nu_t \in C^1\left([0, \infty), \mathcal{P}_1^{\lambda}(\mathbb R^d)\right),$ and 
    \begin{equation}
\label{eq: mean-field-br-density-opt}
    \mathrm{d}\nu_t(x) = \alpha\left(\Psi_{\sigma}[\nu_t](x) - \nu_t(x)\right)\mathrm{d}t, \quad t > 0,
\end{equation}
for some initial condition $\nu_0(x) \in \mathcal{P}_1^{\lambda}(\mathbb R^d).$
\end{proposition}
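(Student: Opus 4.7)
\textbf{Proof plan for Proposition \ref{thm:existence-opt}.} The plan is to recast the flow \eqref{eq: mean-field-br-opt} in mild (integral) form and apply a Banach fixed-point argument. Writing \eqref{eq: mean-field-br-opt} as $\mathrm{d}\nu_t + \alpha\nu_t\,\mathrm{d}t = \alpha\Psi_\sigma[\nu_t]\,\mathrm{d}t$ and using the variation-of-constants principle, a continuous curve $(\nu_t)_{t\ge 0}$ solves \eqref{eq: mean-field-br-opt} if and only if, for every $t\ge 0$,
\begin{equation*}
    \nu_t = e^{-\alpha t}\nu_0 + \int_0^t \alpha e^{-\alpha(t-s)}\Psi_\sigma[\nu_s]\,\mathrm{d}s.
\end{equation*}
Since $e^{-\alpha t}+\int_0^t\alpha e^{-\alpha(t-s)}\mathrm{d}s = 1$ and both $\nu_0$ and $\Psi_\sigma[\nu_s]$ lie in $\mathcal P_1(\mathbb R^d)$ (the latter by Step 1 of the proof of Theorem \ref{thm:wass-contraction}), the right-hand side is a convex combination of elements of $\mathcal P_1(\mathbb R^d)$, and therefore itself an element of $\mathcal P_1(\mathbb R^d)$, with finite first moment controlled by $\int|x|\nu_0(\mathrm{d}x)+K_{\Psi_\sigma}\int|x|e^{-U^\xi(x)}\mathrm{d}x$.

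The next step is to fix $T>0$ and define, on the complete metric space $\bigl(C([0,T];\mathcal P_1(\mathbb R^d)),d_T\bigr)$ with $d_T(\nu,\nu')\coloneqq \sup_{t\in[0,T]}\mathcal W_1(\nu_t,\nu_t')$, the operator
\begin{equation*}
    (\mathcal T\nu)_t \coloneqq e^{-\alpha t}\nu_0 + \int_0^t \alpha e^{-\alpha(t-s)}\Psi_\sigma[\nu_s]\,\mathrm{d}s.
\end{equation*}
Using the Kantorovich--Rubinstein duality \eqref{eq:KRW1} together with the convexity bound $\mathcal W_1\bigl(\int\mu_s\rho(\mathrm{d}s),\int\mu'_s\rho(\mathrm{d}s)\bigr)\le\int \mathcal W_1(\mu_s,\mu'_s)\rho(\mathrm{d}s)$ (applied with $\rho(\mathrm{d}s)=\alpha e^{-\alpha(t-s)}\mathrm{d}s$ on $[0,t]$, together with the mass $e^{-\alpha t}$ at $s=0$) and the $\mathcal W_1$-Lipschitz estimate for $\Psi_\sigma$ from Theorem \ref{thm:wass-contraction}, I would obtain
\begin{equation*}
    \mathcal W_1\bigl((\mathcal T\nu)_t,(\mathcal T\nu')_t\bigr)\le L_{\Psi_\sigma,U^\xi}\int_0^t \alpha e^{-\alpha(t-s)}\mathcal W_1(\nu_s,\nu'_s)\,\mathrm{d}s\le L_{\Psi_\sigma,U^\xi}(1-e^{-\alpha T})d_T(\nu,\nu').
\end{equation*}
Choosing $T$ small enough that $L_{\Psi_\sigma,U^\xi}(1-e^{-\alpha T})<1$ makes $\mathcal T$ a contraction on $C([0,T];\mathcal P_1(\mathbb R^d))$, and Banach's fixed-point theorem yields a unique mild solution on $[0,T]$. (Equivalently, one may work on $[0,\infty)$ with the weighted norm $\sup_{t\ge 0}e^{-(1+\alpha L_{\Psi_\sigma,U^\xi})t}\mathcal W_1(\nu_t,\nu'_t)$ to get a global contraction in one step.) Concatenating solutions starting from $\nu_T,\nu_{2T},\dots$ produces the unique global solution $(\nu_t)_{t\ge 0}\in C([0,\infty);\mathcal P_1(\mathbb R^d))$; continuous dependence on $\nu_0$ follows from the same contraction inequality applied with $\nu,\nu'$ started from different initial conditions, via Gr\"onwall's lemma.

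For the density statement, I observe that by Assumption \ref{ass:abs-cty-pi} we have $\xi\ll\lambda$, hence $\Psi_\sigma[\nu_s]\ll\xi\ll\lambda$ for every $s\ge 0$; combined with $\nu_0\in\mathcal P_1^\lambda(\mathbb R^d)$, the mild formula gives $\nu_t\ll\lambda$ with density
\begin{equation*}
    \nu_t(x)=e^{-\alpha t}\nu_0(x)+\int_0^t \alpha e^{-\alpha(t-s)}\Psi_\sigma[\nu_s](x)\,\mathrm{d}s,
\end{equation*}
which lies in $\mathcal P_1^\lambda(\mathbb R^d)$ by the same convex-combination argument. The Lipschitz bound \eqref{eq: Lipschitz-psi-opt} in the proof of Theorem \ref{thm:wass-contraction} shows that $s\mapsto \Psi_\sigma[\nu_s](x)$ is continuous, so Leibniz's rule yields $t\mapsto\nu_t(x)\in C^1([0,\infty))$ and a direct differentiation recovers $\mathrm{d}\nu_t(x)/\mathrm{d}t=\alpha(\Psi_\sigma[\nu_t](x)-\nu_t(x))$, which is \eqref{eq: mean-field-br-density-opt}. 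The main obstacle I anticipate is the verification of the convex-combination estimate for $\mathcal W_1$ in sufficient generality (needed to close the Picard loop without passing to couplings); once that is in place via duality, the remainder is essentially mechanical, and the contraction constant can be made arbitrarily small by shrinking $T$, so no restriction on $\sigma$ (beyond $\sigma>0$) is required for well-posedness.
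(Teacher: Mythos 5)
Your proposal is correct, and the core estimate is the same as the paper's: both arguments run the mild (variation-of-constants) formulation through the $\mathcal{W}_1$-Lipschitz bound on $\Psi_\sigma$ from Theorem \ref{thm:wass-contraction}, using Kantorovich--Rubinstein duality plus Fubini to pull the Wasserstein distance inside the time integral, and both treat the density statement by pointwise continuity of $s\mapsto\Psi_\sigma[\nu_s](x)$ (your appeal to \eqref{eq: Lipschitz-psi-opt} is in fact a slightly cleaner route than the paper's argument via continuity of the flat derivative). Where you diverge is the fixed-point framework: you contract on $\bigl(C([0,T];\mathcal{P}_1(\mathbb R^d)),\sup_{t\le T}\mathcal{W}_1\bigr)$ by shrinking $T$ (or by a weighted sup-norm) and then concatenate, whereas the paper runs a Picard iteration in the space $\mathcal{P}_1(\mathbb R^d)^{[0,T]}$ equipped with the $L^1$-in-time metric $\int_0^T\mathcal{W}_1(\nu_t,\nu_t')\,\mathrm{d}t$ (completeness quoted from \cite[Lemma A.5]{SiskaSzpruch2020}) and obtains a Cauchy sequence for \emph{arbitrary} $T$ from the iterated-integral bound with factorial decay $(\alpha L_{\Psi_\sigma,U^\xi})^{n-1}T^{n-1}/(n-1)!$, so no smallness of $T$ or of the Lipschitz constant is ever invoked; the global extension is then a non-blow-up remark rather than a concatenation. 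Your route is more elementary (completeness of the sup-metric space is immediate, no external lemma needed) at the cost of the gluing step, and both correctly require only $\sigma>0$, not the contractivity condition \eqref{eq:contractivity_condition}. The only points you leave implicit, both routine, are that $\mathcal{T}$ maps continuous curves to continuous curves (which follows from the uniform first-moment bound $K_{\Psi_\sigma}\int|x|e^{-U^\xi(x)}\mathrm{d}x$ on $\Psi_\sigma[\nu_s]$) and that the local existence time $T$ is uniform over initial data, which holds since $L_{\Psi_\sigma,U^\xi}$ does not depend on $\nu_0$, so the concatenation closes.
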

\begin{proof}[Proof of Theorem \ref{thm:existence-opt}]
    \emph{Step 1: Existence of flow on $[0,T].$}
    We will define a Picard iteration scheme as follows. Fix $T > 0$ and for each $n \geq 1$, fix $\nu_0^{(n)} = \nu_0^{(0)} = \nu_0 \in \mathcal{P}_1(\mathbb R^d).$ Then define the flow $(\nu_t^{(n)})_{t \in [0,T]}$ by
\begin{equation}
\label{eq:Picard-nu-opt}
\nu^{(n)}_t \coloneqq e^{-\alpha t}\nu_0 + \int_0^t \alpha e^{-\alpha(t-s)} \Psi_{\sigma}\left[\nu^{(n-1)}_s\right]\mathrm{d}s, \quad t \in [0,T].
\end{equation}
For fixed $T > 0$, we consider the sequence of flows $\left( (\nu_t^{(n)})_{t \in [0,T]} \right)_{n=0}^{\infty}$ in $\left(\mathcal{P}_1(\mathbb R^d)^{[0,T]}, \mathcal{W}_1^{[0,T]}\right),$ where, for any $(\nu_t)_{t \in [0,T]} \in \mathcal{P}_1(\mathbb R^d)^{[0,T]},$ the distance $\mathcal{W}_1^{[0,T]}$ is defined by
\begin{equation*}
\mathcal{W}_1^{[0,T]} \left( (\nu_t)_{t \in [0,T]}, (\nu_t')_{t \in [0,T]} \right) \coloneqq \int_0^T \mathcal{W}_1(\nu_t, \nu_t') \mathrm{d}t.
\end{equation*}
Since $\left(\mathcal{P}_1(\mathbb R^d), \mathcal{W}_1\right)$ is a complete metric space, we can apply the argument from \cite[Lemma A.5]{SiskaSzpruch2020} with $p=1$ to conclude that $\left(\mathcal{P}_1(\mathbb R^d)^{[0,T]}, \int_0^T \mathcal{W}_1(\nu_t, \nu_t') \mathrm{d}t\right)$ is a complete metric space.

We consider the Picard iteration mapping $\varphi \left((\nu_t^{(n-1)})_{t \in [0,T]}\right) \coloneqq (\nu_t^{(n)})_{t \in [0,T]}$ defined via \eqref{eq:Picard-nu-opt} and show that $\varphi$ admits a unique fixed point $(\nu_t)_{t \in [0,T]}$ in the complete space $\left(\mathcal{P}_1(\mathbb R^d)^{[0,T]}, \mathcal{W}_1^{[0,T]}\right).$ Then this fixed point is the solution to \eqref{eq: mean-field-br-opt}.

\begin{lemma}
\label{thm:ContractivityBD-opt}
	The mapping $\varphi\left((\nu_t^{(n-1)})_{t \in [0,T]}\right) \coloneqq (\nu_t^{(n)})_{t \in [0,T]}$ defined via \eqref{eq:Picard-nu-opt} admits a unique fixed point in $\left(\mathcal{P}_1(\mathbb R^d)^{[0,T]}, \mathcal{W}_1^{[0,T]}\right).$
\end{lemma} 
	\begin{proof}[Proof of Lemma \ref{thm:ContractivityBD-opt}]
 \emph{Step 1: The sequence of flows $\left( (\nu_t^{(n)})_{t \in [0,T]} \right)_{n=0}^{\infty}$ is a Cauchy sequence in $\left(\mathcal{P}_1(\mathbb R^d)^{[0,T]}, \mathcal{W}_1^{[0,T]}\right).$}

Let $\psi \in \textnormal{Lip}_1(\mathbb R^d).$ Then we have that
\begin{multline}
\label{eq: TV-nu-opt}
\begin{aligned}
    &\int_{\mathbb R^d}\psi(x)\left(\nu^{(n)}_t-\nu^{(n-1)}_t\right)(\mathrm{d}x)\\ 
    &= \int_{\mathbb R^d} \alpha\psi(x)\left(\int_0^t e^{-\alpha(t-s)} \Psi_{\sigma}\left[\nu^{(n-1)}_s\right] \mathrm{d}s - \int_0^t e^{-\alpha(t-s)}\Psi_{\sigma}\left[\nu^{(n-2)}_s\right]\mathrm{d}s\right)(\mathrm{d}x)\\ 
    &=\int_{\mathbb R^d} \alpha\psi(x)\left(\int_0^t e^{-\alpha(t-s)} \left(\Psi_{\sigma}\left[\nu^{(n-1)}_s\right] - \Psi_{\sigma}\left[\nu^{(n-2)}_s\right]\right)\mathrm{d}s\right)(\mathrm{d}x)\\
    &=\int_0^t \alpha e^{-\alpha(t-s)} \int_{\mathbb R^d} \psi(x)\left(\Psi_{\sigma}\left[\nu^{(n-1)}_s\right] - \Psi_{\sigma}\left[\nu^{(n-2)}_s\right]\right)(\mathrm{d}x)\mathrm{d}s\\
    &\leq \int_0^t\alpha e^{-\alpha(t-s)} \mathcal{W}_1\left(\Psi_{\sigma}\left[\nu^{(n-1)}_s\right], \Psi_{\sigma}\left[\nu^{(n-2)}_s\right]\right)\mathrm{d}s\\
    &\leq \alpha L_{\Psi_{\sigma},U^{\xi}}\int_0^t \mathcal{W}_1\left(\nu^{(n-1)}_s, \nu^{(n-2)}_s\right)\mathrm{d}s,
\end{aligned}
\end{multline}
where in the third equality we used Fubini's theorem, in the first inequality we used the definition of $\mathcal{W}_1,$ and in the last inequality we used Theorem \ref{thm:wass-contraction} and the fact that $e^{-\alpha(t-s)} \leq 1$ for $s \in [0,t].$ 

Taking supremum over $\psi$ in \eqref{eq: TV-nu-opt} gives
\begin{align*}
    \mathcal{W}_1\left(\nu^{(n)}_t, \nu^{(n-1)}_t\right)
    &\leq \alpha L_{\Psi_{\sigma},U^{\xi}}\int_0^t \mathcal{W}_1\left(\nu^{(n-1)}_s, \nu^{(n-2)}_s\right) \mathrm{d}s\\
    &\leq \left(\alpha L_{\Psi_{\sigma},U^{\xi}}\right)^{n-1} \int_0^t \int_0^{t_1} \ldots \int_0^{t_{n-2}}\mathcal{W}_1\left(\nu^{(1)}_{t_{n-1}}, \nu^{(0)}_{t_{n-1}}\right) \mathrm{d}t_{n-1} \ldots \mathrm{d}t_2 \mathrm{d}t_1\\ 
    &\leq \left(\alpha L_{\Psi_{\sigma},U^{\xi}}\right)^{n-1}\frac{t^{n-2}}{(n-2)!}\int_0^t\mathcal{W}_1\left(\nu^{(1)}_{t_{n-1}}, \nu^{(0)}_{t_{n-1}}\right)\mathrm{d}t_{n-1},
\end{align*}
where in the third inequality we used the bound $\int_0^{t_{n-2}} \mathrm{d}t_{n-1} \leq \int_0^{t}  \mathrm{d}t_{n-1}.$ Hence, we obtain
		\begin{align*}
		\int_0^T \mathcal{W}_1\left(\nu^{(n)}_t, \nu^{(n-1)}_t\right)\mathrm{d}t \leq \left(\alpha L_{\Psi_{\sigma},U^{\xi}}\right)^{n-1}\frac{T^{n-1}}{(n-1)!}\int_0^T \mathcal{W}_1\left(\nu^{(1)}_{t_{n-1}}, \nu^{(0)}_{t_{n-1}}\right)\mathrm{d}t_{n-1}.
	\end{align*}
 Using the definition of $\mathcal{W}_1^{[0,T]},$ the last inequality becomes
 \begin{align*}
    \mathcal{W}_1^{[0,T]} \left( (\nu_t^{(n)})_{t \in [0,T]}, (\nu_t^{(n-1)})_{t \in [0,T]} \right) \leq \left(\alpha L_{\Psi_{\sigma},U^{\xi}}\right)^{n-1} \frac{T^{n-1}}{(n-1)!} \mathcal{W}_1^{[0,T]} \left( (\nu_t^{(1)})_{t \in [0,T]}, (\nu_t^{(0)})_{t \in [0,T]} \right).
    \end{align*}
By choosing $n$ sufficiently large, we conclude that $\left( (\nu_t^{(n)})_{t \in [0,T]} \right)_{n=0}^{\infty}$ is a Cauchy sequence. By completeness of $\left(\mathcal{P}_1(\mathbb R^d)^{[0,T]}, \mathcal{W}_1^{[0,T]}\right),$ the sequence admits a limit point $(\nu_t)_{t \in [0,T]} \in \left(\mathcal{P}_1(\mathbb R^d)^{[0,T]}, \mathcal{W}_1^{[0,T]}\right).$

\emph{Step 2: The limit point $(\nu_t)_{t \in [0,T]}$ is a fixed point of $\varphi.$} From Step 1, we obtain that for Lebesgue-almost all $t \in [0,T]$ we have
	\begin{equation*}
	\mathcal{W}_1(\nu_t^{(n)},\nu_t) \to 0, \quad \text{ as } n \to \infty.
	\end{equation*}
Therefore, by Theorem \ref{thm:wass-contraction}, for Lebesgue-almost all $t \in [0,T],$ we have that $$\mathcal{W}_1\left(\Psi_{\sigma}\left[\nu_t^{(n)}\right],\Psi_{\sigma}\left[\nu_t\right]\right) \to 0, \quad \text{ as } n \to \infty.$$ Hence, letting $n \to \infty$ in \eqref{eq:Picard-nu-opt} and using the dominated convergence theorem (which is possible since $\Psi_{\sigma}$ is uniformly bounded in $n$ due to Assumption \ref{ass:lip-flat}), we conclude that $(\nu_t)_{t \in [0,T]}$ is a fixed point of $\varphi.$

\emph{Step 3: The fixed point $(\nu_t)_{t \in [0,T]}$ of $\varphi$ is unique.} Suppose, for the contrary, that $\varphi$ admits two fixed points $(\nu_t)_{t \in [0,T]}$ and $(\bar{\nu}_t)_{t \in [0,T]}$ such that $\nu_0 = \bar{\nu}_0.$ Then repeating the same calculations from \eqref{eq: TV-nu-opt}, we arrive at
\begin{equation*}
        \mathcal{W}_1(\nu_t, \bar{\nu}_t) \leq \alpha L_{\Psi_{\sigma},U^{\xi}}\int_0^t \mathcal{W}_1(\nu_s,\bar{\nu}_s)\mathrm{d}s.
\end{equation*}
For each $t \in [0,T],$ denote $f(t) \coloneqq \int_0^t \mathcal{W}_1(\nu_s,\bar{\nu}_s)\mathrm{d}s.$ Observe that $f \geq 0$ and $f(0) = 0.$ Then, by Gronwall's lemma, we obtain
    \begin{equation*}
        0 \leq f(t) \leq e^{t\alpha L_{\Psi_{\sigma},U^{\xi}}}f(0) = 0,
    \end{equation*}
and hence 
\begin{equation*}
        \mathcal{W}_1(\nu_t, \bar{\nu}_t) = 0,
\end{equation*}
for Lebesgue-almost all $t \in [0,T],$ which implies
\begin{equation*}
    \nu_t = \bar{\nu}_t,
\end{equation*}
for Lebesgue-almost all $t \in [0,T].$ Therefore, the fixed point $(\nu_t)_{t \in [0,T]}$ of $\varphi$ must be unique.

From \emph{Steps 1, 2 and 3}, we obtain the existence and uniqueness of a flow $(\nu_t)_{t \in [0,T]}$ satisfying \eqref{eq: mean-field-br-opt} for any $T > 0.$
\end{proof}
Having proved Lemma \ref{thm:ContractivityBD-opt}, we return to the proof of Theorem \ref{thm:existence-opt}.

\emph{Step 2: Existence of the flow on $[0,\infty).$}

From Lemma \ref{thm:ContractivityBD-opt}, for any $T > 0,$ there exists unique flow $(\nu_t)_{t \in [0,T]}$ satisfying \eqref{eq: mean-field-br-opt}. It remains to prove that the existence of this flow could be extended to $[0,\infty).$ Let $(\nu_t)_{t \in [0,T]}, (\nu_t')_{t \in [0,T]} \in \mathcal{P}_1(\mathbb R^d).$ Then, using the calculations from Lemma \ref{thm:ContractivityBD-opt}, we have that
$$\mathcal{W}_1(\nu_t, \bar{\nu}_t)\leq \alpha L_{\Psi_{\sigma},U^{\xi}}\int_0^t\mathcal{W}_1(\nu_s,\bar{\nu}_s)\mathrm{d}s,$$
which shows that $(\nu_t)_{t \in [0,T]}$ does not blow up in any finite time, and therefore we can extend $(\nu_t)_{t \in [0,T]}$ globally to $(\nu_t)_{t \in [0,\infty)}.$ By definition, $\Psi_{\sigma}[\nu]$ admits a density of the form
\begin{equation*}
    \Psi_{\sigma}[\nu_t](x) = \frac{\exp\left({-\frac{1}{\sigma}\frac{\delta F}{\delta \nu}(\nu_t, x) - U^{\xi}(x)}\right)}{\int_{\mathbb R^d} \exp\left({-\frac{1}{\sigma}\frac{\delta F}{\delta \nu}(\nu_t, x') - U^{\xi}(x')}\right) \mathrm{d}x'}.
\end{equation*}
For any fixed $x \in \mathbb R^d,$ the flat derivative $t \mapsto \frac{\delta F}{\delta \nu}(\nu_t, x)$ is continuous on $[0, \infty)$ due to the fact that $\nu_t \in C\left([0, \infty), \mathcal{P}_1(\mathbb R^d)\right)$ and $\nu \mapsto \frac{\delta F}{\delta \nu}(\nu, x)$ is continuous. Moreover, the flat derivative $\frac{\delta F}{\delta \nu}(\nu_t, x)$ is bounded for every \(x \in \mathbb R^d\) and all $t \geq 0$ due to Assumption \ref{ass:lip-flat}. Therefore, both terms $Z_{\sigma}(\nu_t)$ and $\exp\left({-\frac{1}{\sigma}\frac{\delta F}{\delta \nu}(\nu_t, x) - U^{\xi}(x)}\right)$ are continuous in $t$ and bounded for every \(x \in \mathbb R^d\) and every $t \geq 0.$ Hence, we have that $\Psi_{\sigma}[\nu_t](x)$ is continuous in $t$ and bounded for every \(x\ \in \mathbb R^d\). Since by assumption $\nu_0 \in \mathcal{P}_1(\mathbb R^d)$ admits a density $\nu_0(x),$ we define the density of $\nu_t$ by
\begin{equation*}
    \nu_t(x) \coloneqq e^{-\alpha t}\nu_0(x) + \int_0^t \alpha e^{-\alpha(t-s)} \Psi_{\sigma}\left[\nu_s\right](x)\mathrm{d}s.
\end{equation*}
By definition $[0,\infty) \ni t \mapsto \nu_t(x)$ is continuous for every $x \in \mathbb R^d.$ Since $s \mapsto e^{-\alpha(t-s)} \Psi_{\sigma}\left[\nu_s\right](x)$ is continuous and bounded in $s$ for every $t \geq 0,$ it follows that $t \mapsto \nu_t \in C^1\left([0, \infty), \mathcal{P}_1^{\lambda}(\mathbb R^d)\right)$ and $\nu_t(x)$ satisfies \eqref{eq: mean-field-br-density-opt}.
\end{proof}

\subsection{Min-max problems}
We recall a necessary condition for the optimality of MNEs of \eqref{eq:mean-field-min-problem}.
\begin{proposition}[Necessary condition for optimality; {\cite[Theorem 3.1]{conforti_game_2020}}]
\label{prop: 2.5}
Let Assumptions \ref{ass:lip-flat-game}, \ref{ass:abs-cty-pi-game} hold and let $\sigma_\nu,\sigma_\mu > 0.$ If the pair $\left(\nu_{\sigma_\nu}^*, \mu_{\sigma_\mu}^*\right) \in \mathcal{P}_1(\mathbb R^d) \times \mathcal{P}_1(\mathbb R^d)$ is an MNE, then $\nu_{\sigma_\nu}^*, \mu_{\sigma_\mu}^*$ are equivalent to $\lambda,$ and for $\lambda$-a.a. $(x,y) \in \mathbb R^d \times \mathbb R^d,$
\begin{equation*}
     \nu_{\sigma_\nu}^*(\mathrm{d}x) = \frac{\exp\left(-\frac{1}{\sigma_\nu} \frac{\delta F}{\delta \nu}(\nu_{\sigma_\nu}^*,\mu_{\sigma_\mu}^*, x)-U^{\xi}(x)\right)}{\int_{\mathbb R^d} \exp\left(-\frac{1}{\sigma_\nu} \frac{\delta F}{\delta \nu}(\nu_{\sigma_\nu}^*, \mu_{\sigma_\mu}^*, x')-U^{\xi}(x')\right)\mathrm{d}x'}\mathrm{d}x,
\end{equation*}
\begin{equation*}
     \mu_{\sigma_\mu}^*(\mathrm{d}y) = \frac{\exp\left(\frac{1}{\sigma_\mu} \frac{\delta F}{\delta \mu}(\nu_{\sigma_\nu}^*, \mu_{\sigma_\mu}^*, y)-U^{\rho}(y)\right)}{\int_{\mathbb R^d} \exp\left(\frac{1}{\sigma_\mu} \frac{\delta F}{\delta \mu}(\nu_{\sigma_\nu}^*, \mu_{\sigma_\mu}^*, y')-U^{\rho}(y')\right)\mathrm{d}y'}\mathrm{d}y.
\end{equation*}
\end{proposition}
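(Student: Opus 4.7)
\textbf{Proof proposal for Proposition \ref{prop: 2.5}.} The plan is to reduce the MNE characterization to two decoupled single-agent optimality problems and then invoke Proposition \ref{prop:optimality}. By definition of an MNE, if $(\nu_{\sigma_\nu}^*, \mu_{\sigma_\mu}^*)$ solves \eqref{eq:mean-field-min-problem}, then
\begin{equation*}
\nu_{\sigma_\nu}^* \in \argmin_{\nu \in \mathcal{P}_1(\mathbb R^d)} F^{\sigma_\nu,\sigma_\mu}\left(\nu,\mu_{\sigma_\mu}^*\right), \qquad \mu_{\sigma_\mu}^* \in \argmax_{\mu \in \mathcal{P}_1(\mathbb R^d)} F^{\sigma_\nu,\sigma_\mu}\left(\nu_{\sigma_\nu}^*,\mu\right).
\end{equation*}
Dropping the terms that are constant in the optimization variable, these reduce to the single-agent problems
\begin{equation*}
    \nu_{\sigma_\nu}^* = \argmin_{\nu \in \mathcal{P}_1(\mathbb R^d)} \left\{ G(\nu) + \sigma_\nu \operatorname{KL}(\nu | \xi) \right\}, \qquad \mu_{\sigma_\mu}^* = \argmin_{\mu \in \mathcal{P}_1(\mathbb R^d)} \left\{ H(\mu) + \sigma_\mu \operatorname{KL}(\mu | \rho) \right\},
\end{equation*}
where $G(\nu) \coloneqq F(\nu, \mu_{\sigma_\mu}^*)$ and $H(\mu) \coloneqq -F(\nu_{\sigma_\nu}^*, \mu)$.

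Next I would check that $G$ and $H$ satisfy Assumption \ref{ass:lip-flat}, which is the hypothesis of Proposition \ref{prop:optimality}. Using the chain rule for flat derivatives, one has $\frac{\delta G}{\delta \nu}(\nu, x) = \frac{\delta F}{\delta \nu}(\nu, \mu_{\sigma_\mu}^*, x)$ and $\frac{\delta H}{\delta \mu}(\mu, y) = -\frac{\delta F}{\delta \mu}(\nu_{\sigma_\nu}^*, \mu, y)$. The boundedness and Lipschitz properties in Assumption \ref{ass:lip-flat-game} directly imply the corresponding properties required by Assumption \ref{ass:lip-flat} (with the second and third arguments frozen), so Proposition \ref{prop:optimality} applies to each problem separately.

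Applying Proposition \ref{prop:optimality} to the $\nu$-problem yields that $\nu_{\sigma_\nu}^*$ is equivalent to Lebesgue measure $\lambda$ and admits the density
\begin{equation*}
    \nu_{\sigma_\nu}^*(\mathrm{d}x) = \frac{\exp\left(-\tfrac{1}{\sigma_\nu}\tfrac{\delta F}{\delta \nu}(\nu_{\sigma_\nu}^*, \mu_{\sigma_\mu}^*, x) - U^{\xi}(x)\right)}{\int_{\mathbb R^d}\exp\left(-\tfrac{1}{\sigma_\nu}\tfrac{\delta F}{\delta \nu}(\nu_{\sigma_\nu}^*, \mu_{\sigma_\mu}^*, x') - U^{\xi}(x')\right)\mathrm{d}x'}\mathrm{d}x,
\end{equation*}
and applying it to the $\mu$-problem, together with the sign change $\frac{\delta H}{\delta \mu} = -\frac{\delta F}{\delta \mu}$, yields the analogous formula for $\mu_{\sigma_\mu}^*$ with $+\frac{1}{\sigma_\mu}\frac{\delta F}{\delta \mu}$ in the exponent and reference potential $U^{\rho}$, which is exactly the claim.

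The main obstacle, and the only step that requires care, is verifying that the ``partial minimization'' reduction is legitimate when the flat derivative of $F$ is defined jointly in $(\nu, \mu)$: one must check that freezing one coordinate preserves the $\mathcal{C}^1$ regularity in the sense of Definition \ref{def:flat-derivative}, and that the Lipschitz/boundedness constants in Assumption \ref{ass:lip-flat-game} transfer without loss to the hypotheses of Proposition \ref{prop:optimality}. Once this is done, the rest is a direct invocation of the single-agent necessary optimality result.
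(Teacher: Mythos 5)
Your proposal is correct, but it takes a genuinely different route from the paper: the paper gives no proof of Proposition \ref{prop: 2.5} at all, quoting it as an external result, \cite[Theorem 3.1]{conforti_game_2020}, exactly as the single-agent Proposition \ref{prop:optimality} is quoted from the literature. Your route instead derives the two-player statement from the single-agent one: an MNE is by definition optimal in each frozen-coordinate problem; the KL term in the frozen variable is a finite constant (finiteness follows from the MNE property together with boundedness of $F$ — if, say, $\operatorname{KL}(\mu^*_{\sigma_\mu}|\rho)=\infty$, the frozen value would be $-\infty$ and $\mu^*_{\sigma_\mu}$ could not be a maximizer — which justifies dropping it); and the frozen functionals $G(\nu)=F(\nu,\mu^*_{\sigma_\mu})$, $H(\mu)=-F(\nu^*_{\sigma_\nu},\mu)$ inherit Assumption \ref{ass:lip-flat} from Assumption \ref{ass:lip-flat-game}, with partial flat derivatives $\frac{\delta F}{\delta\nu}(\cdot,\mu^*_{\sigma_\mu},\cdot)$ and $-\frac{\delta F}{\delta\mu}(\nu^*_{\sigma_\nu},\cdot,\cdot)$ (no chain rule is needed: freezing a coordinate trivially preserves the $\mathcal{C}^1$ property, the bounds $C_F,\bar C_F$ and the Lipschitz constants, simply by setting the Wasserstein increment in the frozen variable to zero). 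Applying Proposition \ref{prop:optimality} twice, with the sign flip coming from $\frac{\delta H}{\delta\mu}=-\frac{\delta F}{\delta\mu}$, then yields exactly the two Gibbs formulas, and Assumption \ref{ass:abs-cty-pi-game} supplies the required conditions on both $U^{\xi}$ and $U^{\rho}$. What your approach buys is self-containedness relative to the paper's own toolbox, replacing the game-theoretic citation by the single-agent first-order condition the paper already invokes; what the paper's citation buys is avoiding the (small but real) bookkeeping you correctly flag, namely the transfer of regularity under freezing and the justification for discarding the frozen KL term.
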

Now we give the proof of the existence and uniqueness of the flow \eqref{eq: mean-field-br}, which is an extension of the proof of Proposition \ref{thm:existence-opt}. Note that the proof closely follows the argument of \cite[Theorem 1]{lascu-entr}, with the key difference being that we use the Wasserstein distance in place of the Total Variation distance. We include the proof here for completeness.
\begin{proposition}[Existence and uniqueness of the flow]
\label{thm:existence}
    Let Assumptions \ref{ass:lip-flat-game}, \ref{ass:abs-cty-pi-game} hold and let $(\nu_0, \mu_0) \in \mathcal{P}_1(\mathbb R^d) \times \mathcal{P}_1(\mathbb R^d).$ Then there exists a unique solution $(\nu_t, \mu_t)_{t \geq 0} \in C\left([0,\infty];\mathcal{P}_1(\mathbb R^d) \times \mathcal{P}_1(\mathbb R^d)\right)$ to \eqref{eq: mean-field-br} and the solution depends continuously on the initial condition. Moreover, if $(\nu_0,\mu_0) \in \mathcal{P}^{\lambda}_1(\mathbb R^d) \times \mathcal{P}^{\lambda}_1(\mathbb R^d),$ then $(\nu_t,\mu_t)_{t \geq 0}$ admits the density $(\nu_t(\cdot),\mu_t(\cdot))_{t \geq 0} \subset \mathcal{P}_{1}^{\lambda}(\mathbb R^d) \times \mathcal{P}_{1}^{\lambda}(\mathbb R^d)$ such that, for every $x,y \in \mathbb R^d,$ it holds that $t \mapsto \nu_t \in C^1\left([0, \infty), \mathcal{P}_1^{\lambda}(\mathbb R^d)\right), t \mapsto \mu_t \in C^1\left([0, \infty), \mathcal{P}_1^{\lambda}(\mathbb R^d)\right)$ and 
    \begin{equation}
\label{eq: mean-field-br-density}
\begin{cases}
    \mathrm{d}\nu_t(x) = \alpha_\nu\left(\Psi_{\sigma_\nu}[\nu_t, \mu_t](x) - \nu_t(x)\right)\mathrm{d}t,\\
    \mathrm{d}\mu_t(y) = \alpha_\mu\left(\Phi_{\sigma_\mu}[\nu_t, \mu_t](y) - \mu_t(y) \right)\mathrm{d}t, \quad t > 0,
\end{cases}
\end{equation}
for some initial condition $(\nu_0(x), \mu_0(y)) \in \mathcal{P}_1^{\lambda}(\mathbb R^d) \times \mathcal{P}_1^{\lambda}(\mathbb R^d).$
\end{proposition}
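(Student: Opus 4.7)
The plan is to adapt the Picard iteration argument used in the proof of Proposition \ref{thm:existence-opt} to the coupled system \eqref{eq: mean-field-br}, working on the product space $\mathcal{P}_1(\mathbb R^d)\times\mathcal{P}_1(\mathbb R^d)$ endowed with the modified metric $\widetilde{\mathcal{W}}_1$. Concretely, for fixed $T>0$, I would define on $\bigl(\mathcal{P}_1(\mathbb R^d)\times\mathcal{P}_1(\mathbb R^d)\bigr)^{[0,T]}$ the distance
\begin{equation*}
    \widetilde{\mathcal{W}}_1^{[0,T]}\bigl((\nu_t,\mu_t)_{t\in[0,T]},(\nu_t',\mu_t')_{t\in[0,T]}\bigr)\coloneqq\int_0^T\widetilde{\mathcal{W}}_1\bigl((\nu_t,\mu_t),(\nu_t',\mu_t')\bigr)\mathrm{d}t,
\end{equation*}
which is complete by the same reasoning as in \cite[Lemma A.5]{SiskaSzpruch2020} applied componentwise with $p=1$. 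The iteration is
\begin{equation*}
    \nu_t^{(n)}\coloneqq e^{-\alpha_\nu t}\nu_0+\int_0^t\alpha_\nu e^{-\alpha_\nu(t-s)}\Psi_{\sigma_\nu}[\nu_s^{(n-1)},\mu_s^{(n-1)}]\mathrm{d}s,
\end{equation*}
\begin{equation*}
    \mu_t^{(n)}\coloneqq e^{-\alpha_\mu t}\mu_0+\int_0^t\alpha_\mu e^{-\alpha_\mu(t-s)}\Phi_{\sigma_\mu}[\nu_s^{(n-1)},\mu_s^{(n-1)}]\mathrm{d}s.
\end{equation*}

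The first main step is to show the sequence of iterates is Cauchy. Testing each component against $\psi\in\textnormal{Lip}_1(\mathbb R^d)$, using Fubini, the Kantorovich--Rubinstein duality \eqref{eq:KRW1}, and then the joint Lipschitz estimates \eqref{eq:wass-estim-1}--\eqref{eq:wass-estim-2} from the proof of Theorem \ref{thm:wass-contraction-game}, I would obtain, after bounding $e^{-\alpha_\nu(t-s)},e^{-\alpha_\mu(t-s)}\le 1$,
\begin{equation*}
    \widetilde{\mathcal{W}}_1\bigl((\nu_t^{(n)},\mu_t^{(n)}),(\nu_t^{(n-1)},\mu_t^{(n-1)})\bigr)\le C\int_0^t\widetilde{\mathcal{W}}_1\bigl((\nu_s^{(n-1)},\mu_s^{(n-1)}),(\nu_s^{(n-2)},\mu_s^{(n-2)})\bigr)\mathrm{d}s,
\end{equation*}
with $C\coloneqq\alpha_\nu L_{\Psi_{\sigma_\nu},U^\xi}+\alpha_\mu L_{\Phi_{\sigma_\mu},U^\rho}$. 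Iterating $n-1$ times produces a factor $\tfrac{(CT)^{n-1}}{(n-1)!}$, which is summable, proving the Cauchy property and yielding a limit $(\nu_t,\mu_t)_{t\in[0,T]}$. Passage to the limit in the Picard relation is then justified by the joint Lipschitz continuity of $(\Psi_{\sigma_\nu},\Phi_{\sigma_\mu})$ combined with the dominated convergence theorem, using that the densities of $\Psi_{\sigma_\nu},\Phi_{\sigma_\mu}$ are uniformly bounded by $K_{\Psi_{\sigma_\nu}}e^{-U^\xi}$ and $K_{\Phi_{\sigma_\mu}}e^{-U^\rho}$ as shown in \eqref{eq: estimate-psi}--\eqref{eq: estimate-phi}. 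Uniqueness on $[0,T]$ follows by the same type of estimate applied to two putative solutions with the same initial data, via Gronwall's lemma applied to $t\mapsto\int_0^t\widetilde{\mathcal{W}}_1((\nu_s,\mu_s),(\bar\nu_s,\bar\mu_s))\mathrm{d}s$.

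Since the Gronwall estimate produces no blow-up in finite time, the local solution extends uniquely to $[0,\infty)$, and continuous dependence on initial conditions follows immediately by applying the same estimate with differing initial data (essentially a special case of Theorem \ref{thm:stability} with $\sigma_\nu=\sigma_\nu'$ and $\sigma_\mu=\sigma_\mu'$). For the density statement, assuming $(\nu_0,\mu_0)\in\mathcal{P}_1^\lambda(\mathbb R^d)\times\mathcal{P}_1^\lambda(\mathbb R^d)$, I define
\begin{equation*}
    \nu_t(x)\coloneqq e^{-\alpha_\nu t}\nu_0(x)+\int_0^t\alpha_\nu e^{-\alpha_\nu(t-s)}\Psi_{\sigma_\nu}[\nu_s,\mu_s](x)\mathrm{d}s,
\end{equation*}
and analogously $\mu_t(y)$. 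By the uniform bounds on the Best Response densities and the continuity in $t$ of $s\mapsto\Psi_{\sigma_\nu}[\nu_s,\mu_s](x)$ (which follows from the joint continuity of the flat derivative together with continuity of $t\mapsto(\nu_t,\mu_t)$ in $\widetilde{\mathcal{W}}_1$), the integrand is continuous and bounded, so $t\mapsto\nu_t(x),\mu_t(y)$ are $C^1$ and satisfy \eqref{eq: mean-field-br-density} pointwise in $x,y$.

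The main technical obstacle is really just bookkeeping: the coupling between the two equations forces all Lipschitz estimates to be carried out with respect to $\widetilde{\mathcal{W}}_1$ rather than $\mathcal{W}_1$, and one must carefully use that $(\alpha_\nu L_{\Psi_{\sigma_\nu},U^\xi}+\alpha_\mu L_{\Phi_{\sigma_\mu},U^\rho})$ plays the role of the single constant $\alpha L_{\Psi_\sigma,U^\xi}$ from the single-agent case. No contractivity condition on $\sigma_\nu,\sigma_\mu$ is needed for existence and uniqueness on finite intervals, as the factorial in the Picard scheme overcomes any constant $C$; the contractivity hypothesis only enters later when one wants exponential convergence results such as Theorem \ref{thm:stability} or Theorem \ref{thm:conv}.
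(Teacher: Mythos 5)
Your proposal follows essentially the same route as the paper's proof: a Picard iteration on the product space with the integrated (sum) $\mathcal{W}_1$ metric, a Cauchy estimate with factorial decay using the joint Lipschitz bounds \eqref{eq:wass-estim-1}--\eqref{eq:wass-estim-2}, passage to the limit via dominated convergence, uniqueness and global extension via Gronwall, and the explicit density formula for the $C^1$ statement — and it is correct, including your observation that no contractivity condition on $\sigma_\nu,\sigma_\mu$ is needed here. The only nitpick is the parenthetical appeal to Theorem \ref{thm:stability} for continuous dependence (that theorem assumes the contractivity conditions), but your primary argument — the same Gronwall estimate with differing initial data on a finite interval — already suffices.
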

\begin{proof}[Proof of Theorem \ref{thm:existence}]
    \emph{Step 1: Existence of flow on $[0,T].$}
    We will define a Picard iteration scheme as follows. Fix $T > 0$ and for each $n \geq 1$, fix $\nu_0^{(n)} = \nu_0^{(0)} = \nu_0 \in \mathcal{P}_1(\mathbb R^d)$ and $$\mu_0^{(n)} = \mu_0^{(0)} = \mu_0 \in \mathcal{P}_1(\mathbb R^d).$$ Then define the flow $(\nu_t^{(n)},\mu_t^{(n)})_{t \in [0,T]}$ by
\begin{equation}
\label{eq:Picard-nu}
\nu^{(n)}_t \coloneqq e^{-\alpha_\nu t}\nu_0 + \int_0^t \alpha_\nu e^{-\alpha_\nu(t-s)} \Psi_{\sigma_\nu}\left[\nu^{(n-1)}_s,\mu^{(n-1)}_s\right]\mathrm{d}s, \quad t \in [0,T],
\end{equation}
\begin{equation}
\label{eq:Picard-mu}
\mu^{(n)}_t \coloneqq e^{-\alpha_\mu t}\mu_0 + \int_0^t \alpha_\mu e^{-\alpha_\mu(t-s)} \Phi_{\sigma_\mu}\left[\nu^{(n-1)}_s,\mu^{(n-1)}_s\right]\mathrm{d}s, \quad t \in [0,T].
\end{equation}
For fixed $T > 0$, we consider the sequence of flows $\left( (\nu_t^{(n)},\mu_t^{(n)})_{t \in [0,T]} \right)_{n=0}^{\infty}$ \newline in $\left(\mathcal{P}_1(\mathbb R^d)^{[0,T]} \times \mathcal{P}_1(\mathbb R^d)^{[0,T]}, \mathcal{W}_1^{[0,T]}\right),$ where, for any $(\nu_t,\mu_t)_{t \in [0,T]} \in \mathcal{P}_1(\mathbb R^d)^{[0,T]} \times \mathcal{P}_1(\mathbb R^d)^{[0,T]},$ the distance $\mathcal{W}_1^{[0,T]}$ is defined by
\begin{equation*}
\mathcal{W}_1^{[0,T]} \left( (\nu_t,\mu_t)_{t \in [0,T]}, (\nu_t',\mu_t')_{t \in [0,T]} \right) \coloneqq \int_0^T \mathcal{W}_1(\nu_t, \nu_t') \mathrm{d}t + \int_0^T \mathcal{W}_1(\mu_t, \mu_t') \mathrm{d}t.
\end{equation*}
Since $\left(\mathcal{P}_1(\mathbb R^d), \mathcal{W}_1\right)$ is a complete metric space, we can apply the argument from \cite[Lemma A.5]{SiskaSzpruch2020} with $p=1$ to conclude that $\left(\mathcal{P}_1(\mathbb R^d)^{[0,T]}, \int_0^T \mathcal{W}_1(\nu_t, \nu_t') \mathrm{d}t\right)$ and \newline $\left(\mathcal{P}_1(\mathbb R^d)^{[0,T]}, \int_0^T \mathcal{W}_1(\mu_t, \mu_t') \mathrm{d}t\right)$ are a complete metric spaces, and hence so is \newline $\left(\mathcal{P}_1(\mathbb R^d)^{[0,T]} \times \mathcal{P}_1(\mathbb R^d)^{[0,T]}, \mathcal{W}_1^{[0,T]}\right).$ 

We consider the Picard iteration mapping $\varphi \left((\nu_t^{(n-1)},\mu_t^{(n-1)})_{t \in [0,T]}\right) \coloneqq (\nu_t^{(n)}, \mu_t^{(n)})_{t \in [0,T]}$ defined via \eqref{eq:Picard-nu} and \eqref{eq:Picard-mu} and show that $\varphi$ admits a unique fixed point $(\nu_t, \mu_t)_{t \in [0,T]}$ in the complete space $\left(\mathcal{P}_1(\mathbb R^d)^{[0,T]} \times \mathcal{P}_1(\mathbb R^d)^{[0,T]}, \mathcal{W}_1^{[0,T]}\right).$ Then this fixed point is the solution to \eqref{eq: mean-field-br}.

\begin{lemma}
\label{thm:ContractivityBD}
	The mapping $\varphi\left((\nu_t^{(n-1)},\mu_t^{(n-1)})_{t \in [0,T]}\right) \coloneqq (\nu_t^{(n)}, \mu_t^{(n)})_{t \in [0,T]}$ defined via \eqref{eq:Picard-nu} and \eqref{eq:Picard-mu} admits a unique fixed point in $\left(\mathcal{P}_1(\mathbb R^d)^{[0,T]} \times \mathcal{P}_1(\mathbb R^d)^{[0,T]}, \mathcal{W}_1^{[0,T]}\right).$
\end{lemma} 
	\begin{proof}[Proof of Lemma \ref{thm:ContractivityBD}]
 \emph{Step 1: The sequence of flows $\left( (\nu_t^{(n)},\mu_t^{(n)})_{t \in [0,T]} \right)_{n=0}^{\infty}$ is a Cauchy sequence in $\left(\mathcal{P}_1(\mathbb R^d)^{[0,T]} \times \mathcal{P}_1(\mathbb R^d)^{[0,T]}, \mathcal{W}_1^{[0,T]}\right).$}

Let $\psi \in \textnormal{Lip}_1(\mathbb R^d).$ Then we have that
\begin{multline}
\label{eq: TV-nu}
\begin{aligned}
    &\int_{\mathbb R^d}\psi(x)\left(\nu^{(n)}_t-\nu^{(n-1)}_t\right)(\mathrm{d}x)\\ 
    &= \int_{\mathbb R^d} \psi(x)\left(\int_0^t e^{-\alpha_\nu(t-s)} \Psi_{\sigma_\nu}\left[\nu^{(n-1)}_s,\mu^{(n-1)}_s\right] \mathrm{d}s - \int_0^t e^{-\alpha_\nu(t-s)}\Psi_{\sigma_\nu}\left[\nu^{(n-2)}_s, \mu^{(n-2)}_s\right]\mathrm{d}s\right)(\mathrm{d}x)\\ 
    &=\int_{\mathbb R^d} \psi(x)\left(\int_0^t e^{-\alpha_\nu(t-s)} \left(\Psi_{\sigma_\nu}\left[\nu^{(n-1)}_s,\mu^{(n-1)}_s\right] - \Psi_{\sigma_\nu}\left[\nu^{(n-2)}_s,\mu^{(n-2)}_s\right]\right)\mathrm{d}s\right)(\mathrm{d}x)\\
    &=\int_0^t e^{-\alpha_\nu(t-s)} \int_{\mathbb R^d} \psi(x)\left(\Psi_{\sigma_\nu}\left[\nu^{(n-1)}_s,\mu^{(n-1)}_s\right] - \Psi_{\sigma_\nu}\left[\nu^{(n-2)}_s,\mu^{(n-2)}_s\right]\right)(\mathrm{d}x)\mathrm{d}s\\
    &\leq \int_0^t e^{-\alpha_\nu(t-s)} \mathcal{W}_1\left(\Psi_{\sigma_\nu}\left[\nu^{(n-1)}_s,\mu^{(n-1)}_s\right], \Psi_{\sigma_\nu}\left[\nu^{(n-2)}_s,\nu^{(n-2)}_s\right]\right)\mathrm{d}s\\
    &\leq L_{\Psi_{\sigma_\nu},U^{\xi}}\int_0^t \left(\mathcal{W}_1\left(\nu^{(n-1)}_s, \nu^{(n-2)}_s\right) + \mathcal{W}_1\left(\mu^{(n-1)}_s, \mu^{(n-2)}_s\right)\right)\mathrm{d}s,
\end{aligned}
\end{multline}
where in the third equality we used Fubini's theorem, in the first inequality we used the definition of $\mathcal{W}_1,$ and in the last inequality we used \eqref{eq:wass-estim-1} and the fact that $e^{-\alpha(t-s)} \leq 1$ for $s \in [0,t].$ 

A similar argument using \eqref{eq:wass-estim-2} yields
\begin{multline}
\label{eq:TV-mu}
    \begin{aligned}
        \int_{\mathbb R^d}\psi(x)\left(\mu^{(n)}_t-\mu^{(n-1)}_t\right)(\mathrm{d}x) \leq L_{\Phi_{\sigma_\mu},U^{\rho}}\int_0^t \left(\mathcal{W}_1\left(\nu^{(n-1)}_s, \nu^{(n-2)}_s\right) + \mathcal{W}_1\left(\mu^{(n-1)}_s, \mu^{(n-2)}_s\right)\right)\mathrm{d}s.
    \end{aligned}
\end{multline}

Taking supremum over $\psi$ in \eqref{eq: TV-nu} and \eqref{eq:TV-mu} and adding both together gives
\begin{align*}
    &\mathcal{W}_1\left(\nu^{(n)}_t, \nu^{(n-1)}_t\right) + \mathcal{W}_1\left(\mu^{(n)}_t, \mu^{(n-1)}_t\right)\\ 
    &\leq \left(L_{\Psi_{\sigma_\nu},U^{\xi}} + L_{\Phi_{\sigma_\mu},U^{\rho}}\right)\int_0^t \left(\mathcal{W}_1\left(\nu^{(n-1)}_s, \nu^{(n-2)}_s\right) + \mathcal{W}_1\left(\mu^{(n-1)}_s, \mu^{(n-2)}_s\right)\right)\mathrm{d}s\\
    &\leq \left(L_{\Psi_{\sigma_\nu},U^{\xi}} + L_{\Phi_{\sigma_\mu},U^{\rho}}\right)^{n-1} \int_0^t \int_0^{t_1} \ldots \int_0^{t_{n-2}}\left(\mathcal{W}_1\left(\nu^{(1)}_{t_{n-1}}, \nu^{(0)}_{t_{n-1}}\right) + \mathcal{W}_1\left(\mu^{(1)}_{t_{n-1}}, \mu^{(0)}_{t_{n-1}}\right)\right) \mathrm{d}t_{n-1} \ldots \mathrm{d}t_2 \mathrm{d}t_1\\ 
    &\leq \left(L_{\Psi_{\sigma_\nu},U^{\xi}} + L_{\Phi_{\sigma_\mu},U^{\rho}}\right)^{n-1}\frac{t^{n-2}}{(n-2)!}\int_0^t\left(\mathcal{W}_1\left(\nu^{(1)}_{t_{n-1}}, \nu^{(0)}_{t_{n-1}}\right) + \mathcal{W}_1\left(\mu^{(1)}_{t_{n-1}}, \mu^{(0)}_{t_{n-1}}\right)\right)\mathrm{d}t_{n-1},
\end{align*}
where in the third inequality we used the bound $\int_0^{t_{n-2}} \mathrm{d}t_{n-1} \leq \int_0^{t}  \mathrm{d}t_{n-1}.$ Hence, we obtain
		\begin{align*}
		&\int_0^T \left(\mathcal{W}_1\left(\nu^{(n)}_t, \nu^{(n-1)}_t\right) + \mathcal{W}_1\left(\mu^{(n)}_t, \mu^{(n-1)}_t\right)\right)\mathrm{d}t\\
  &\leq \left(L_{\Psi_{\sigma_\nu},U^{\xi}} + L_{\Phi_{\sigma_\mu},U^{\rho}}\right)^{n-1}\frac{T^{n-1}}{(n-1)!}\int_0^T \left(\mathcal{W}_1\left(\nu^{(1)}_{t_{n-1}}, \nu^{(0)}_{t_{n-1}}\right) + \mathcal{W}_1\left(\mu^{(1)}_{t_{n-1}}, \mu^{(0)}_{t_{n-1}}\right)\right)\mathrm{d}t_{n-1}.
	\end{align*}
 Using the definition of $\mathcal{W}_1^{[0,T]},$ the last inequality becomes
 \begin{align*}
    &\mathcal{W}_1^{[0,T]} \left( (\nu_t^{(n)},\mu_t^{(n)})_{t \in [0,T]}, (\nu_t^{(n-1)},\mu_t^{(n-1)})_{t \in [0,T]} \right)\\ &\leq \left(L_{\Psi_{\sigma_\nu},U^{\xi}} + L_{\Phi_{\sigma_\mu},U^{\rho}}\right)^{n-1} \frac{T^{n-1}}{(n-1)!} \mathcal{W}_1^{[0,T]} \left( (\nu_t^{(1)},\mu_t^{(1)})_{t \in [0,T]}, (\nu_t^{(0)},\mu_t^{(0)})_{t \in [0,T]} \right).
    \end{align*}
By choosing $n$ sufficiently large, we conclude that $\left( (\nu_t^{(n)},\mu_t^{(n)})_{t \in [0,T]} \right)_{n=0}^{\infty}$ is a Cauchy sequence. By completeness of $\left(\mathcal{P}_1(\mathbb R^d)^{[0,T]} \times \mathcal{P}_1(\mathbb R^d)^{[0,T]}, \mathcal{W}_1^{[0,T]}\right),$ the sequence admits a limit point $(\nu_t,\mu_t)_{t \in [0,T]} \in \left(\mathcal{P}_1(\mathbb R^d)^{[0,T]} \times \mathcal{P}_1(\mathbb R^d)^{[0,T]}, \mathcal{W}_1^{[0,T]}\right).$

\emph{Step 2: The limit point $(\nu_t,\mu_t)_{t \in [0,T]}$ is a fixed point of $\varphi.$} From Step 1, we obtain that for Lebesgue-almost all $t \in [0,T]$ we have
	\begin{equation*}
	\mathcal{W}_1(\nu_t^{(n)},\nu_t) \to 0, \quad\mathcal{W}_1(\mu_t^{(n)},\mu_t) \to 0 \quad \text{ as } n \to \infty.
	\end{equation*}
Therefore, by \eqref{eq:wass-estim-1} and \eqref{eq:wass-estim-2}, for Lebesgue-almost all $t \in [0,T],$ we have that $$\mathcal{W}_1\left(\Psi_{\sigma_\nu}\left[\nu_t^{(n)},\mu_t^{(n)}\right],\Psi_{\sigma_\nu}\left[\nu_t, \mu_t\right]\right) \to 0, \quad \mathcal{W}_1\left(\Phi_{\sigma_\mu}\left[\nu_t^{(n)},\mu_t^{(n)}\right],\Phi_{\sigma_\mu}\left[\nu_t,\mu_t\right]\right) \to 0, \quad \text{ as } n \to \infty.$$ Hence, letting $n \to \infty$ in \eqref{eq:Picard-nu} and in \eqref{eq:Picard-mu} and using the dominated convergence theorem (which is possible since $\Psi_{\sigma_\nu}, \Phi_{\sigma_\mu}$ are uniformly bounded in $n$ due to Assumption \ref{ass:lip-flat-game}), we conclude that $(\nu_t,\mu_t)_{t \in [0,T]}$ is a fixed point of $\varphi.$

\emph{Step 3: The fixed point $(\nu_t, \mu_t)_{t \in [0,T]}$ of $\varphi$ is unique.} Suppose, for the contrary, that $\varphi$ admits two fixed points $(\nu_t,\mu_t)_{t \in [0,T]}$ and $(\bar{\nu}_t,\bar{\mu}_t)_{t \in [0,T]}$ such that $\nu_0 = \bar{\nu}_0$ and $\mu_0 = \bar{\mu}_0.$ Then repeating the same calculations from \eqref{eq: TV-nu} and \eqref{eq:TV-mu}, we arrive at
\begin{equation*}
        \mathcal{W}_1(\nu_t, \bar{\nu}_t) + \mathcal{W}_1(\mu_t, \bar{\mu}_t) \leq \left(L_{\Psi_{\sigma_\nu},U^{\xi}} + L_{\Phi_{\sigma_\mu},U^{\rho}}\right)\int_0^t \left(\mathcal{W}_1(\nu_s,\bar{\nu}_s) + \mathcal{W}_1(\mu_s,\bar{\mu}_s)\right)\mathrm{d}s.
\end{equation*}
For each $t \in [0,T],$ denote $f(t) \coloneqq \int_0^t \left(\mathcal{W}_1(\nu_s,\bar{\nu}_s)+\mathcal{W}_1(\mu_s,\bar{\mu}_s)\right)\mathrm{d}s.$ Observe that $f \geq 0$ and $f(0) = 0.$ Then, by Gronwall's lemma, we obtain
    \begin{equation*}
        0 \leq f(t) \leq e^{\left(L_{\Psi_{\sigma_\nu},U^{\xi}} + L_{\Phi_{\sigma_\mu},U^{\rho}}\right)t}f(0) = 0,
    \end{equation*}
and hence 
\begin{equation*}
        \mathcal{W}_1(\nu_t, \bar{\nu}_t) + \mathcal{W}_1(\mu_t, \bar{\mu}_t) = 0,
\end{equation*}
for Lebesgue-almost all $t \in [0,T],$ which implies
\begin{equation*}
    \nu_t = \bar{\nu}_t, \quad \mu_t = \bar{\mu}_t,
\end{equation*}
for Lebesgue-almost all $t \in [0,T].$ Therefore, the fixed point $(\nu_t,\mu_t)_{t \in [0,T]}$ of $\varphi$ must be unique.

From \emph{Steps 1, 2 and 3}, we obtain the existence and uniqueness of a flow $(\nu_t,\mu_t)_{t \in [0,T]}$ satisfying \eqref{eq: mean-field-br} for any $T > 0.$
\end{proof}
Having proved Lemma \ref{thm:ContractivityBD}, we return to the proof of Theorem \ref{thm:existence}.

\emph{Step 2: Existence of the flow on $[0,\infty).$}

From Lemma \ref{thm:ContractivityBD}, for any $T > 0,$ there exists unique flow $(\nu_t,\mu_t)_{t \in [0,T]}$ satisfying \eqref{eq: mean-field-br}. It remains to prove that the existence of this flow could be extended to $[0,\infty).$ Let $(\nu_t,\mu_t)_{t \in [0,T]}, (\nu_t',\mu_t')_{t \in [0,T]} \in \mathcal{P}_1(\mathbb R^d).$ Then, using the calculations from Lemma \ref{thm:ContractivityBD}, we have that
$$\mathcal{W}_1(\nu_t, \bar{\nu}_t) + \mathcal{W}_1(\mu_t, \bar{\mu}_t) \leq \left(L_{\Psi_{\sigma_\nu},U^{\xi}} + L_{\Phi_{\sigma_\mu},U^{\rho}}\right)\int_0^t \left(\mathcal{W}_1(\nu_s,\bar{\nu}_s) + \mathcal{W}_1(\mu_s,\bar{\mu}_s)\right)\mathrm{d}s,$$
which shows that $(\nu_t,\mu_t)_{t \in [0,T]}$ does not blow up in any finite time, and therefore we can extend $(\nu_t,\mu_t)_{t \in [0,T]}$ globally to $(\nu_t,\mu_t)_{t \in [0,\infty)}.$ By definition, $\Psi_{\sigma_\nu}[\nu,\mu]$ admits a density of the form
\begin{equation*}
    \Psi_{\sigma_\nu}[\nu_t,\mu_t](x) = \frac{\exp\left({-\frac{1}{\sigma_\nu}\frac{\delta F}{\delta \nu}(\nu_t,\mu_t, x) - U^{\xi}(x)}\right)}{\int_{\mathbb R^d} \exp\left({-\frac{1}{\sigma_\nu}\frac{\delta F}{\delta \nu}(\nu_t, \mu_t, x') - U^{\xi}(x')}\right) \mathrm{d}x'}.
\end{equation*}
For any fixed $x \in \mathbb R^d,$ the flat derivative $t \mapsto \frac{\delta F}{\delta \nu}(\nu_t, \mu_t, x)$ is continuous on $[0, \infty)$ due to the fact that $\nu_t,\mu_t \in C\left([0, \infty), \mathcal{P}_1(\mathbb R^d)\right)$ and $(\nu,\mu) \mapsto \frac{\delta F}{\delta \nu}(\nu,\mu, x)$ is continuous. Moreover, the flat derivative $\frac{\delta F}{\delta \nu}(\nu_t,\mu_t, x)$ is bounded for every \(x \in \mathbb R^d\) and all $t \geq 0$ due to Assumption \ref{ass:lip-flat-game}. Therefore, both terms $Z_{\sigma_\nu}(\nu_t,\mu_t)$ and $\exp\left({-\frac{1}{\sigma_\nu}\frac{\delta F}{\delta \nu}(\nu_t, \mu_t, x) - U^{\xi}(x)}\right)$ are continuous in $t$ and bounded for every \(x \in \mathbb R^d\) and every $t \geq 0.$ Hence, we have that $\Psi_{\sigma_\nu}[\nu_t,\mu_t](x)$ is continuous in $t$ and bounded for every \(x\ \in \mathbb R^d\). Since by assumption $\nu_0 \in \mathcal{P}_1(\mathbb R^d)$ admits a density $\nu_0(x),$ we define the density of $\nu_t$ by
\begin{equation*}
\label{eq:density-nu}
    \nu_t(x) \coloneqq e^{-\alpha_\nu t}\nu_0(x) + \int_0^t \alpha_\nu e^{-\alpha_\nu(t-s)} \Psi_{\sigma_\nu}\left[\nu_s,\mu_s\right](x)\mathrm{d}s.
\end{equation*}
By definition $[0,\infty) \ni t \mapsto \nu_t(x)$ is continuous for every $x \in \mathbb R^d.$ Since $s \mapsto e^{-\alpha_\nu(t-s)} \Psi_{\sigma_\nu}\left[\nu_s,\mu_s\right](x)$ is continuous and bounded in $s$ for every $t \geq 0,$ it follows that $t \mapsto \nu_t \in C^1\left([0, \infty), \mathcal{P}_1^{\lambda}(\mathbb R^d)\right)$ and $\nu_t(x)$ satisfies \eqref{eq: mean-field-br-density} The same argument gives that $\Phi_{\sigma_\mu}[\nu_t,\mu_t](y)$ is continuous in $t$ and bounded for every \(y\ \in \mathbb R^d\), consequently that $\mu_t$ admits a density $\mu_t(y)$ that satisfies \eqref{eq: mean-field-br-density} and $t \mapsto \mu_t \in C^1\left([0, \infty), \mathcal{P}_1^{\lambda}(\mathbb R^d)\right).$
\end{proof}

\section{Notation for MDPs}
\label{app: AppB}
In this section, we present standard notation for MDPs, following the conventions used in \cite{leahy}. Let $(E,d)$ denote a Polish space, i.e., a complete separable metric space. We endow $E$ with its Borel $\sigma$-alebra $\mathcal{B}(E).$ Let $B_b(E)$ denote the space of bounded measurable functions $f:E\rightarrow \mathbb{R}$ endowed with the supremum norm $|f|_{B_b(E)}=\sup_{x\in E}|f(x)|$. Let $\mathcal{M}(E)$ denote the Banach space of finite signed measures $m$ on $E$ endowed with the total variation norm $|m|_{\mathcal{M}(E)}=|m|(E)$, where $|m|$ is the total-variation measure. We denote by $\mathcal{M}_+(E) \subset \mathcal{M}(E)$ the subset of finite positive measures. For a measure $m\in \mathcal{M}_+(E)$ and measurable function $f:E\rightarrow \mathbb{R}$, let 
$$
\underset{x\in E}{\textnormal{ess sup}} f \coloneqq \inf \{ c\in \mathbb{R}: m\{x\in E: f(x)>c\}=0\}.
$$
For given Polish spaces $(E_1,d_1)$ and $(E_2,d_2),$ denote by $b\mathcal{K}(E_1|E_2)$ the Banach space of bounded signed kernels $k: E_2\rightarrow \mathcal{M}(E_1)$ endowed with the norm $|k|_{b\mathcal{K}(E_1|E_2)}=\sup_{x\in E_2}|k(x)|_{\mathcal{M}(E_1)}$; that is, $k(U|\cdot): E_2\rightarrow \mathbb{R}$ is measurable for all $U\in \mathcal{M}(E_1)$ and $k(\cdot|x)\in \mathcal{M}(E_1)$ for all $x\in E_2$. For a fixed reference measure $\eta \in \mathcal{M}_+(E_1)$, we denote by $b\mathcal{K}_{\eta}(E_1|E_2)$ the space of bounded kernels that are absolutely continuous with respect to $\eta.$ Every kernel $k\in b\mathcal{K}(E_1|E_2)$ induces a bounded linear operator $T_k \in \mathcal{L}(\mathcal{M}(E_2),\mathcal{M}(E_1))$ defined by
$$
T_k\eta(\mathrm{d}y)=\eta k(\mathrm{d}y)=\int_{E_2}\eta(\mathrm{d}x)k(\mathrm{d}y|x).$$ Moreover, we have
\begin{equation}
\label{eq:kernel norm}
|k|_{b\mathcal{K}(E_1|E_2)}=\sup_{x\in E_2}\underset{|h|_{B_b(E_1)}\leq 1}{\sup_{h\in B_b(E_1)}}\int_{E_1}h(y)k(dy|x)
=|T_k|_{\mathcal{L}(\mathcal{M}(E_2),\mathcal{M}(E_1))},
\end{equation}
where the latter is the operator norm. Thus, $b\mathcal{K}(E|E)$ is a Banach algebra with the product defined via composition of the corresponding linear operators. In particular, for given $k\in b\mathcal{K}(E|E),$
\begin{equation}
\label{eq:prodofkernels}
T_k^n\mu(\mathrm{d}y)=\mu k^n(\mathrm{d}y) = \int_{E^{n}}\mu(\mathrm{d}x_0)k(\mathrm{d}x_1|x_0)\cdots k(\mathrm{d}x_{n-1}|x_{n-2}) k(\mathrm{d}y|x_{n-1})\,.
\end{equation}
Let $\mathcal{P}(E_1|E_2)$ denote the convex subspace of $b\mathcal{K}(E_1|E_2)$ such that $P(\cdot|x)\in \mathcal{P}(E_1)$ for all $x\in E_2.$

We denote by $\left((S\times A)^{\mathbb N},\mathcal{F}\right)$ a sample space, where the elements of $(S\times A)^{\mathbb N}$ are state-action pairs $(s_n,a_n)_{n=0}^{\infty}$ with $(s_n,a_n) \in S\times A,$ for each $n \in \mathbb N,$ and $\mathcal{F}$ is the associated $\sigma$-algebra. By \cite[Proposition 7.28]{bertsekas}, for a given initial distribution $\gamma \in \mathcal{P}(S)$ and policy $\pi \in \mathcal{P}(A|S),$ there exists a unique product probability measure $\mathbb P^{\pi}_{\gamma}$ on $\left((S\times A)^{\mathbb N},\mathcal{F}\right)$ such that for every $n\in \mathbb{N}$, we have
\begin{enumerate}
    \item $\mathbb P^{\pi}_{\gamma}(s_0\in \mathcal{S})=\gamma(\mathcal{S}),$
    \item $\mathbb P^{\pi}(a_n\in \mathcal{A}|(s_0,a_0,\ldots,s_n))=\pi(a_n|s_n),$
    \item $\mathbb P^{\pi}_{\gamma}(s_{n+1}\in \mathcal{S}|(s_0,a_0,\ldots,s_n,a_n))=P(\mathcal{S}|s_n,a_n),$
\end{enumerate} 
for all $\mathcal{S}\in \mathcal{B}(S)$ and $\mathcal{A}\in \mathcal{B}(A).$ Thus, $\{s_n\}_{n \geq 0}$ is a Markov chain with transition kernel $P_{\pi}\in \mathcal{P}(S|S)$ defined by
\begin{equation}
\label{state trans kernel}
P_\pi(\mathrm{d}s'|s) \coloneqq \int_A P(\mathrm{d}s'|s,a')\pi(\mathrm{d}a'|s).    
\end{equation}
The expectation corresponding to $\mathbb P^{\pi}_{\gamma}$ is denoted by $\mathbb{E}^{\pi}_{\gamma}.$ For given $s\in S$, we denote $\mathbb{E}^{\pi}_{s} \coloneqq \mathbb{E}^{\pi}_{\delta_s}$, where $\delta_s\in \mathcal{P}(S)$ denotes the Dirac measure at $s\in S$.

\section{Auxiliary results for MDPs}\label{appendix: MDP} 

In this section, we present two sets of auxiliary results, each corresponding to Section \ref{sec:main} and Section \ref{sec:main-minmax}, respectively.
\subsection{Single-agent optimization}
First, we prove that the value function $V_\tau$ in the MDP setting of Section \ref{sec:mdp} satisfies Assumption \ref{ass:lip-flat}. Our proof closely follows the argument in \cite[Theorem 2.4]{leahy}, but we include it here for completeness.

Before we give the proof, we need to derive an alternative expression of $V_\tau$ to \eqref{V-cumulative}. Again, for full details on notation, we refer to Appendix \ref{app: AppB}. By the Bellman principle (see, e.g., \cite[Lemma B.2]{kerimkulov2024fisherraogradientflowentropyregularised}), for all $\pi \in \mathcal{P}^{\eta}(A|S)$ and $s \in S,$ it follows that
\begin{equation}
\label{eq:V-bellman}
    V_\tau^\pi(s) = \int_A \left(Q_\tau^{\pi}(s,a) + \tau \log \frac{\mathrm{d}\pi}{\mathrm{d}\eta}(a|s)\right)\pi(\mathrm{d}a|s).
\end{equation}
For a given policy $\pi \in \mathcal{P}^{\eta}(A|S),$ the occupancy kernel $d^{\pi}\in \mathcal{P}(S|S)$ is defined by
\begin{equation}
\label{occup kernel}
d^{\pi}(\mathrm{d}s'|s)=(1-\delta)\sum_{n=0}^{\infty}\delta^n P^n_{\pi}(\mathrm{d}s'|s),    
\end{equation}
where $P^0_{\pi}(\mathrm{d}s'|s):=\delta_s(\mathrm{d}s')$, for the Dirac measure $\delta_s$ at $s \in S,$ $P^n_{\pi}$ is a product of kernels in the sense of \eqref{eq:prodofkernels}, and the convergence of the series is understood in $b\mathcal{K}(S|S)$. For a given initial distribution $\gamma \in \mathcal{P}(S),$ the state occupancy measure $d_\gamma^\pi \in \mathcal{P}(S)$ is defined by
\begin{equation*}
\label{occup measure}
    d_\gamma^\pi(\mathrm{d}s') \coloneqq \int_S d^{\pi}(\mathrm{d}s'|s) \gamma(\mathrm{d}s).
\end{equation*}
Using \eqref{eq:Q} and \eqref{state trans kernel} in \eqref{eq:V-bellman} gives for all $s \in S$ that
\begin{equation*}
    V_\tau^\pi(s) = \int_A \left(c(s,a) + \tau \log \frac{\mathrm{d}\pi}{\mathrm{d}\eta}(a|s)\right)\pi(\mathrm{d}a|s) + \delta \int_S V_\tau^\pi(s')P_\pi(\mathrm{d}s'|s).
\end{equation*}
Applying this identity recursively and using \eqref{occup kernel} yields for all $s \in S$ that
\begin{equation*}
    V_\tau^{\pi}(s) = \frac{1}{1-\delta}\int_S\int_A \left[c(s',a) + \tau \log \frac{\mathrm{d}\pi}{\mathrm{d}\eta}(a|s')\right]\pi(\mathrm{d}a|s')d^{\pi}(\mathrm{d}s'|s).
\end{equation*}
Finally, integrating over $\gamma \in \mathcal{P}(S)$ leads to
\begin{equation}
\label{eq:V-alternative}
    V_\tau^{\pi}(\gamma) = \frac{1}{1-\delta}\int_S\int_A \left[c(s',a) + \tau \log \frac{\mathrm{d}\pi}{\mathrm{d}\eta}(a|s')\right]\pi(\mathrm{d}a|s')d_\gamma^{\pi}(\mathrm{d}s').
\end{equation}
Now, we introduce the following necessary notation for the proof. For $k \in \{0,1\},$ let $\mathfrak C^k$ be the set of jointly measurable functions $h: \mathbb{R}^d\times S\times A\rightarrow \mathbb{R}$ such that for Lebesgue-almost all $\theta \in \mathbb R^d,$ all $s\in S,$ and $\eta$-almost all $a\in A$, $h$ is $k$-times differentiable in $\theta$, and satisfies
$$
|h|_{\mathfrak C^k}:=\max_{k \in \{0,1\}} \underset{a\in A}{\operatorname{ess\, sup}}\sup_{s\in  S}\underset{\theta \in \mathbb R^d}{\operatorname{ess\, sup}}|\nabla_\theta^k h(\theta,s,a)|<\infty,
$$
where the essential supremum over $A$ is defined relative to the reference measure $\eta$ and the essential supremum over $\mathbb R^d$ is defined relative to the Lebesgue measure. 

For each $\nu \in \mathcal{P}_1(\mathbb R^d)$ and $\gamma \in \mathcal{P}(S)$ we define the maps
\begin{equation*}
    \mathcal{P}_1(\mathbb R^d) \ni \nu \mapsto \Pi(\nu)(\mathrm{d}a|s) \coloneqq \pi_\nu(\mathrm{d}a|s) \in \mathcal{P}_{\eta}(A|S),
\end{equation*}
and 
\begin{equation*}
    \mathcal{P}_1(\mathbb R^d) \ni \nu \mapsto \hat{V}_\tau(\nu)(\gamma) \coloneqq V_\tau^{\pi_\nu}(\gamma) \in \mathbb R,
\end{equation*}
where $\pi_\nu(\mathrm{d}a|s)$ and $\bar{V}_\tau(\nu)(\gamma)$ are given by \eqref{eq:softmax} and \eqref{eq:V-alternative}, respectively.

The proof will depend on the following key lemmas, which we state without proof, as they are available in \cite{leahy}. The first is Lemma \cite[Lemma 2.2]{leahy}.
\begin{lemma}[Flat derivative of $\Pi$]
\label{lem:func_deriv_pi}
	If $f\in \mathfrak C^0$, then the function $\Pi: \mathcal{P}_1(\mathbb{R}^d)\rightarrow \mathcal{P}_{\eta}(A|S)$ has a flat derivative $\frac{\delta \Pi}{\delta \nu}:\mathcal{P}_1(\mathbb{R}^d)\times \mathbb{R}^d\rightarrow b\mathcal{K}_{\eta}(A|S)$ given by 
	\begin{equation}
	\begin{aligned}
    \label{eq:func_deriv_pi}
		\frac{\delta \Pi}{\delta \nu}(\nu,\theta)(\mathrm{d}a|s)=\left(f(\theta,s,a)
		 -\int_{A}f(\theta,s,a')\Pi(\nu)(\mathrm{d}a'|s)\right)\Pi(\nu)(\mathrm{d}a|s).
	\end{aligned}
\end{equation}
\end{lemma}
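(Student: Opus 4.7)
The plan is to verify the candidate formula \eqref{eq:func_deriv_pi} by a direct computation based on the definition of a flat derivative: namely, for any $\nu,\nu'\in\mathcal{P}_1(\mathbb{R}^d)$ and $\nu_t\coloneqq \nu+t(\nu'-\nu)$, one must show
\begin{equation*}
\Pi(\nu')(\mathrm{d}a|s)-\Pi(\nu)(\mathrm{d}a|s)=\int_0^1\int_{\mathbb{R}^d}\frac{\delta \Pi}{\delta \nu}(\nu_t,\theta)(\mathrm{d}a|s)(\nu'-\nu)(\mathrm{d}\theta)\,\mathrm{d}t,
\end{equation*}
together with the required continuity and boundedness of the candidate as an element of $b\mathcal{K}_\eta(A|S)$. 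First I would rewrite $\Pi(\nu)(\mathrm{d}a|s)=\exp(f_\nu(s,a))Z_\nu(s)^{-1}\eta(\mathrm{d}a)$ with $Z_\nu(s)\coloneqq\int_A\exp(f_\nu(s,a'))\eta(\mathrm{d}a')$ and $f_\nu(s,a)\coloneqq\int_{\mathbb{R}^d}f(\theta,s,a)\nu(\mathrm{d}\theta)$, and observe that linearity of $\nu\mapsto f_\nu$ gives $\partial_t f_{\nu_t}(s,a)=\int_{\mathbb{R}^d}f(\theta,s,a)(\nu'-\nu)(\mathrm{d}\theta)$.

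Next I would compute $\partial_t\Pi(\nu_t)(\mathrm{d}a|s)$ by the quotient rule. The derivative of the numerator $\exp(f_{\nu_t}(s,a))$ is $\exp(f_{\nu_t}(s,a))\int_{\mathbb{R}^d}f(\theta,s,a)(\nu'-\nu)(\mathrm{d}\theta)$, while $\partial_t Z_{\nu_t}(s)=\int_A \exp(f_{\nu_t}(s,a'))\int_{\mathbb{R}^d}f(\theta,s,a')(\nu'-\nu)(\mathrm{d}\theta)\eta(\mathrm{d}a')$. Since $f\in\mathfrak{C}^0$ is bounded, Fubini's theorem and dominated convergence justify both differentiating under the integrals defining $f_{\nu_t}$ and $Z_{\nu_t}$ and swapping the order of integration against $(\nu'-\nu)$, which lets me rewrite $\partial_t Z_{\nu_t}(s)=Z_{\nu_t}(s)\int_{\mathbb{R}^d}\bigl(\int_A f(\theta,s,a')\Pi(\nu_t)(\mathrm{d}a'|s)\bigr)(\nu'-\nu)(\mathrm{d}\theta)$. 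Combining these gives
\begin{equation*}
\partial_t\Pi(\nu_t)(\mathrm{d}a|s)=\int_{\mathbb{R}^d}\Bigl(f(\theta,s,a)-\int_A f(\theta,s,a')\Pi(\nu_t)(\mathrm{d}a'|s)\Bigr)\Pi(\nu_t)(\mathrm{d}a|s)(\nu'-\nu)(\mathrm{d}\theta),
\end{equation*}
which is exactly $\int_{\mathbb{R}^d}\frac{\delta\Pi}{\delta\nu}(\nu_t,\theta)(\mathrm{d}a|s)(\nu'-\nu)(\mathrm{d}\theta)$ with the candidate \eqref{eq:func_deriv_pi}. Integrating over $t\in[0,1]$ then yields the required identity by the fundamental theorem of calculus applied pointwise in $(s,a)$.

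It remains to verify the regularity claim $\frac{\delta\Pi}{\delta\nu}(\nu,\theta)\in b\mathcal{K}_\eta(A|S)$. Boundedness is immediate from $|f|_{\mathfrak{C}^0}<\infty$: the bracket in \eqref{eq:func_deriv_pi} is bounded by $2|f|_{\mathfrak{C}^0}$ uniformly in $(\theta,s,a)$, and absolute continuity with respect to $\eta$ follows since $\Pi(\nu)(\mathrm{d}a|s)\ll\eta(\mathrm{d}a)$. Joint measurability in $(\nu,\theta,s)$ follows from the measurability assumptions on $f$ and continuity of $\nu\mapsto f_\nu$ on $\mathcal{P}_1(\mathbb{R}^d)$.

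The main technical point I expect to need care with is the interchange of limits and integrals when differentiating $f_{\nu_t}$ and $Z_{\nu_t}$ with respect to $t$; here the uniform bound $|f|_{\mathfrak{C}^0}<\infty$ provides a dominating function simultaneously for every $\theta\in\mathbb{R}^d$, every $s\in S$ and $\eta$-a.a. $a\in A$, so dominated convergence applies uniformly and the Fubini swap required to pull the $(\nu'-\nu)$-integral outside is legitimate. The rest of the argument is a straightforward calculation.
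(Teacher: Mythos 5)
Correct. The paper itself does not prove this lemma but imports it from \cite[Lemma 2.2]{leahy}, and your self-contained verification is exactly the standard argument behind that result: differentiate the softmax kernel along the segment $\nu_t=\nu+t(\nu'-\nu)$ via the quotient rule, use the uniform bound $|f|_{\mathfrak C^0}<\infty$ (together with finiteness of $\eta$ and of the signed measure $\nu'-\nu$) to justify differentiation under the integral and the Fubini swap, and conclude by the fundamental theorem of calculus, with the membership in $b\mathcal{K}_\eta(A|S)$ following from the bound $2|f|_{\mathfrak C^0}$ on the bracket and $\Pi(\nu)(\cdot|s)\ll\eta$. The only loose point is the passing claim that $\nu\mapsto f_\nu$ is continuous on $\mathcal{P}_1(\mathbb R^d)$ — under $f\in\mathfrak C^0$ alone ($f$ merely bounded and measurable in $\theta$) this need not hold in the $\mathcal{W}_1$ topology — but this is not needed for the identity you actually verify, so it does not affect the proof.
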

The second is \cite[Lemma 2.3]{leahy}, which can be viewed as a policy gradient theorem.
\begin{lemma}[Flat derivative of $\hat{V}_\tau$]
	\label{lem:func_deriv_J}
	If $f\in \mathfrak C^1$, then
	the function $\hat{V}_\tau: \mathcal{P}_1(\mathbb{R}^d)\rightarrow \mathbb{R}$ has a flat derivative $\frac{\delta \hat{V}_\tau}{\delta \nu}: \mathcal{P}_1(\mathbb{R}^d)\times \mathbb{R}^d\rightarrow \mathbb{R}$ given by 
	\begin{equation}
	\begin{aligned}\label{eq:func_deriv_J}
		\frac{\delta \hat{V}_\tau}{\delta \nu}(\nu,\theta)(\gamma)=\frac{1}{1-\delta}\int_{S}\int_{A}\left(Q^{\Pi(\nu)}_{\tau}(s,a)+\tau\log\frac{\mathrm{d}\Pi(\nu)}{\mathrm{d}\eta}(a|s)\right)\frac{\delta \Pi}{\delta \nu}(\nu,\theta)(\mathrm{d}a|s)d^{\Pi(\nu)}_{\gamma}(\mathrm{d}s).
	\end{aligned}
    \end{equation}
\end{lemma}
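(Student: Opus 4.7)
The plan is to derive \eqref{eq:func_deriv_J} from the recursive Bellman representation \eqref{eq:V-bellman} via the policy-gradient style of argument. Fix $\nu, \nu' \in \mathcal{P}_1(\mathbb{R}^d)$, set $\nu_\varepsilon \coloneqq (1-\varepsilon)\nu + \varepsilon\nu'$ for $\varepsilon \in [0,1]$ and abbreviate $\pi_\varepsilon \coloneqq \Pi(\nu_\varepsilon)$; then \eqref{eq:V-bellman} together with the definition \eqref{eq:Q} of $Q_\tau^{\pi}$ gives, for each $s \in S$,
\begin{equation*}
V_\tau^{\pi_\varepsilon}(s) = \int_A \left[c(s,a) + \tau\log\frac{\mathrm{d}\pi_\varepsilon}{\mathrm{d}\eta}(a|s) + \delta \int_S V_\tau^{\pi_\varepsilon}(s')P(\mathrm{d}s'|s,a)\right]\pi_\varepsilon(\mathrm{d}a|s).
\end{equation*}

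First I would differentiate both sides in $\varepsilon$ at $\varepsilon=0$, using Lemma \ref{lem:func_deriv_pi} to identify $\tfrac{\mathrm{d}}{\mathrm{d}\varepsilon}\pi_\varepsilon|_{\varepsilon=0}(\mathrm{d}a|s) = \int_{\mathbb{R}^d} \tfrac{\delta\Pi}{\delta\nu}(\nu,\theta)(\mathrm{d}a|s)(\nu'-\nu)(\mathrm{d}\theta)$. A convenient cancellation occurs on the entropic term: the contribution in which the derivative falls on $\log(\mathrm{d}\pi_\varepsilon/\mathrm{d}\eta)$ equals $\tau \tfrac{\mathrm{d}}{\mathrm{d}\varepsilon} \int_A \pi_\varepsilon(\mathrm{d}a|s)|_0 = 0$, since $\pi_\varepsilon$ is a probability measure for every $\varepsilon$. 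Writing $W(s)\coloneqq \tfrac{\mathrm{d}}{\mathrm{d}\varepsilon}V_\tau^{\pi_\varepsilon}(s)|_{\varepsilon=0}$, this yields a linearised Bellman equation
\begin{equation*}
W(s) = g(s) + \delta \int_S W(s')\, P_{\Pi(\nu)}(\mathrm{d}s'|s), \qquad g(s) \coloneqq \int_A \left(Q_\tau^{\Pi(\nu)}(s,a) + \tau\log\tfrac{\mathrm{d}\Pi(\nu)}{\mathrm{d}\eta}(a|s)\right) \tfrac{\mathrm{d}\pi_\varepsilon}{\mathrm{d}\varepsilon}\Big|_0(\mathrm{d}a|s),
\end{equation*}
with driving term $g$ carrying all the dependence on $\tfrac{\delta \Pi}{\delta \nu}$.

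Next I would invert $(I - \delta P_{\Pi(\nu)})$ via its Neumann series in the Banach algebra $b\mathcal{K}(S|S)$, which converges in operator norm with norm bounded by $1/(1-\delta)$ thanks to \eqref{eq:kernel norm}. Recognising $\sum_{n=0}^\infty \delta^n P_{\Pi(\nu)}^n = d^{\Pi(\nu)}/(1-\delta)$ via the definition \eqref{occup kernel} of the occupancy kernel, this produces $W(s) = \tfrac{1}{1-\delta}\int_S g(s')\, d^{\Pi(\nu)}(\mathrm{d}s'|s)$. Integrating against $\gamma$, applying Fubini, and substituting the formula from Lemma \ref{lem:func_deriv_pi} for $\tfrac{\mathrm{d}\pi_\varepsilon}{\mathrm{d}\varepsilon}|_0$ yields the claimed identity \eqref{eq:func_deriv_J}; linearity in $\nu'-\nu$ confirms that the resulting object is indeed the flat derivative of $\hat V_\tau$.

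The main obstacle is rigorously justifying the several exchanges of limit, differentiation and integration, chiefly in the derivation of the Poisson-type equation for $W$ and in its inversion through the infinite series implicit in $d^{\Pi(\nu)}$. To handle this I would exploit the boundedness of $c$, the assumption $f \in \mathfrak{C}^1$ and the explicit expression \eqref{eq:func_deriv_pi}, which together provide uniform-in-$(s,a,\varepsilon)$ bounds on all integrands along the segment $\{\nu_\varepsilon\}_{\varepsilon\in[0,1]}$; combined with the geometric factor $\delta^n$ and the continuity of $\nu \mapsto P_{\Pi(\nu)}$ in $b\mathcal{K}(S|S)$, these bounds allow dominated convergence to pass through each step and close the argument. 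An alternative route would differentiate the explicit representation \eqref{eq:V-alternative} directly, but one would then still have to compute the flat derivative of the occupancy measure $d_\gamma^{\Pi(\nu)}$, which reduces to the same Neumann-series computation.
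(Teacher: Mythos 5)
The paper does not actually prove this lemma: it is quoted verbatim from \cite{leahy} (Lemma 2.3 there) and stated without proof, so your proposal can only be compared with the standard argument behind that reference. Your outline is exactly that standard policy-gradient derivation, and its main components are correct: the cancellation of the entropic term because $\varepsilon\mapsto\pi_\varepsilon(\cdot|s)$ has constant total mass, the linearised Bellman (Poisson) equation for $W$, the Neumann-series inversion of $I-\delta P_{\Pi(\nu)}$ with the identification $\sum_{n\geq 0}\delta^n P_{\Pi(\nu)}^n=(1-\delta)^{-1}d^{\Pi(\nu)}$ from \eqref{occup kernel}, and the final integration against $\gamma$ followed by substitution of Lemma \ref{lem:func_deriv_pi} all reproduce \eqref{eq:func_deriv_J}.

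The one point where your plan as written is not yet a proof is the very first step: writing $W(s)=\tfrac{\mathrm{d}}{\mathrm{d}\varepsilon}V_\tau^{\pi_\varepsilon}(s)\big|_{\varepsilon=0}$ and differentiating the fixed-point equation \eqref{eq:V-bellman} term by term presupposes that this derivative exists, which is precisely what you are trying to establish; dominated convergence alone does not remove this circularity. The clean repair is to subtract the Bellman equations for $\pi_\varepsilon$ and $\pi_0$, obtaining the exact identity $V_\tau^{\pi_\varepsilon}-V_\tau^{\pi_0}=(I-\delta P_{\pi_\varepsilon})^{-1}\Delta_\varepsilon$ with $\Delta_\varepsilon(s)=\int_A\bigl(Q_\tau^{\pi_0}(s,a)+\tau\log\tfrac{\mathrm{d}\pi_\varepsilon}{\mathrm{d}\eta}(a|s)\bigr)(\pi_\varepsilon-\pi_0)(\mathrm{d}a|s)+\tau\int_A\log\tfrac{\mathrm{d}\pi_\varepsilon}{\mathrm{d}\pi_0}(a|s)\,\pi_0(\mathrm{d}a|s)$, then divide by $\varepsilon$ and pass to the limit using the uniform bounds you cite (boundedness of $c$, $f\in\mathfrak C^1$, the explicit formula \eqref{eq:func_deriv_pi}, and the norm bound $|(I-\delta P_{\pi_\varepsilon})^{-1}|\leq(1-\delta)^{-1}$ in $b\mathcal{K}(S|S)$); the second term in $\Delta_\varepsilon$ is $o(\varepsilon)$, so the limit yields your Poisson equation for $W$ without assuming differentiability a priori. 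Equivalently, your alternative route of differentiating \eqref{eq:V-alternative} directly, after computing the flat derivative of $\nu\mapsto d_\gamma^{\Pi(\nu)}$ by the same Neumann-series manipulation, avoids the issue. With that repair the argument is complete and matches what the cited reference does.
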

We are ready to prove that $\hat{V}_\tau$ satisfies Assumption \ref{ass:lip-flat}.
\begin{proposition}
\label{prop:verification-prop}
    Let $S$ and $A$ be Polish spaces, $c \in B_b(S \times A),$ $\eta \in \mathcal{M}_+(A),$ and $\tau > 0.$ Then for all $\gamma \in \mathcal{P}(S)$ the function $F(\cdot) \coloneqq \hat{V}_\tau(\cdot)(\gamma)$ satisfies Assumption \ref{ass:lip-flat} with 
    \begin{equation*}
        C_F \coloneqq \frac{2}{(1-\delta)^2}\left(|c|_{B_b(S \times A)} + \tau\left(2|f|_{\mathfrak C^0} + |\log \eta(A)|\right)\right)|f|_{\mathfrak C^0},
    \end{equation*}
    \begin{equation*}
        L_F \coloneqq |f|_{\mathfrak C^1}\Bigg(\frac{1}{(1-\delta)^2}\left(|c|_{B_b(S \times A)} + \tau\left(2|f|_{\mathfrak C^0} + |\log \eta(A)|\right)\right)\max\Bigg\{2,\frac{5}{1-\delta}|f|_{\mathfrak C^0}\Bigg\} +4\tau|f|_{\mathfrak C^0}\Bigg).
    \end{equation*}
\end{proposition}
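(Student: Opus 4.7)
The plan is to start from the explicit formula for the flat derivative provided by Lemma \ref{lem:func_deriv_J},
\[
\frac{\delta F}{\delta\nu}(\nu,\theta) \;=\; \frac{1}{1-\delta}\int_{S}\!\!\int_{A}\!\left(Q^{\Pi(\nu)}_{\tau}(s,a)+\tau\log\tfrac{\mathrm{d}\Pi(\nu)}{\mathrm{d}\eta}(a|s)\right)\frac{\delta\Pi}{\delta\nu}(\nu,\theta)(\mathrm{d}a|s)\,d^{\Pi(\nu)}_{\gamma}(\mathrm{d}s),
\]
and combine this with the formula for $\frac{\delta\Pi}{\delta\nu}$ from Lemma \ref{lem:func_deriv_pi}. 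Boundedness of $F\in \mathcal{C}^1$ and the estimate $|\tfrac{\delta F}{\delta\nu}|\le C_F$ will then follow from routine uniform bounds. First, since $\nu\in\mathcal{P}_1(\mathbb R^d)$ and $f\in\mathfrak{C}^0$, $|f_\nu(s,a)|\le |f|_{\mathfrak{C}^0}$ and $|\log\tfrac{\mathrm{d}\pi_\nu}{\mathrm{d}\eta}(a|s)|\le 2|f|_{\mathfrak{C}^0}+|\log\eta(A)|$. Using $V^\pi_\tau = \frac{1}{1-\delta}\int\int [c+\tau\log\tfrac{\mathrm{d}\pi}{\mathrm{d}\eta}]\pi\,d^\pi_\gamma$ one gets $|V^{\pi_\nu}_\tau|,|Q^{\pi_\nu}_\tau|\le \tfrac{1}{1-\delta}(|c|_{B_b}+\tau(2|f|_{\mathfrak{C}^0}+|\log\eta(A)|))$. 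Finally $|\tfrac{\delta\Pi}{\delta\nu}(\nu,\theta)|(A|s)\le 2|f|_{\mathfrak{C}^0}$, and multiplying these together yields the stated value of $C_F$.

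For the Lipschitz estimate I would split the difference $\frac{\delta F}{\delta\nu}(\nu',\theta')-\frac{\delta F}{\delta\nu}(\nu,\theta)$ into four contributions using a telescoping expansion: (i) the change in $\frac{\delta\Pi}{\delta\nu}(\cdot,\cdot)(\mathrm{d}a|s)$, (ii) the change in $\log\tfrac{\mathrm{d}\Pi(\nu)}{\mathrm{d}\eta}$, (iii) the change in $Q^{\Pi(\nu)}_\tau$, and (iv) the change in the occupancy measure $d^{\Pi(\nu)}_\gamma$. By Kantorovich--Rubinstein duality \eqref{eq:KRW1} and $f\in\mathfrak{C}^1$, $|f_{\nu'}(s,a)-f_\nu(s,a)|\le |f|_{\mathfrak{C}^1}\mathcal{W}_1(\nu',\nu)$, and similarly $|f(\theta',s,a)-f(\theta,s,a)|\le |f|_{\mathfrak{C}^1}|\theta'-\theta|$. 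From these one derives that the softmax policies satisfy $\sup_s |\pi_{\nu'}(\cdot|s)-\pi_\nu(\cdot|s)|_{\mathrm{TV}}\le 2|f|_{\mathfrak{C}^1}\mathcal{W}_1(\nu',\nu)$, and that the integrand defining $\frac{\delta\Pi}{\delta\nu}$ is Lipschitz in both $\theta$ and $\nu$, handling (i) and (ii).

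The main obstacle is (iii)--(iv), controlling $Q^{\Pi(\nu')}_\tau-Q^{\Pi(\nu)}_\tau$ and $d^{\Pi(\nu')}_\gamma-d^{\Pi(\nu)}_\gamma$ uniformly. For the occupancy measure I would use the representation \eqref{occup kernel} $d^\pi = (1-\delta)\sum_{n\ge 0}\delta^n P_\pi^n$ together with the standard telescoping identity
\[
P_{\pi'}^n - P_\pi^n \;=\; \sum_{k=0}^{n-1} P_{\pi'}^{k}(P_{\pi'}-P_\pi)P_{\pi}^{n-1-k}
\]
in the Banach algebra $b\mathcal{K}(S|S)$ (cf.\ \eqref{eq:kernel norm}--\eqref{eq:prodofkernels}). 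Since $|P_{\pi'}-P_\pi|_{b\mathcal{K}(S|S)}\le \sup_s|\pi'(\cdot|s)-\pi(\cdot|s)|_{\mathrm{TV}}$, summing the geometric series yields $|d^{\pi'}_\gamma-d^\pi_\gamma|_{\mathrm{TV}}\le \tfrac{1}{1-\delta}\sup_s|\pi'(\cdot|s)-\pi(\cdot|s)|_{\mathrm{TV}}$, which combined with the earlier softmax estimate gives a $\mathcal{W}_1$-Lipschitz bound with constant proportional to $|f|_{\mathfrak{C}^1}/(1-\delta)$. For $Q^{\Pi(\nu')}_\tau-Q^{\Pi(\nu)}_\tau$ a performance-difference argument using \eqref{eq:V-alternative} (viewing $V^\pi_\tau(\gamma)$ as an integral against $d^\pi_\gamma$) plus the Lipschitz bounds on $c$, $\log\tfrac{\mathrm{d}\pi_\nu}{\mathrm{d}\eta}$ and on $d^\pi_\gamma$ yields the same type of bound.

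Collecting the four contributions, each summand will be a product of a uniform bound (from Step 1) and a factor $|f|_{\mathfrak{C}^1}(|\theta'-\theta|+\mathcal{W}_1(\nu',\nu))$, with denominators $(1-\delta)$ accumulating up to $(1-\delta)^{-3}$ from (iii)-(iv); adding the constants produces exactly the stated $L_F$. The only non-routine point is keeping track of the constants so that the final expression matches the one displayed in the proposition, which is essentially bookkeeping once the four $\mathcal{W}_1$-Lipschitz estimates above are in place.
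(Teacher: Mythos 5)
Your overall strategy coincides with the paper's: bound $\bar Q_\tau^{\Pi(\nu)}\coloneqq\frac{1}{1-\delta}\bigl(Q_\tau^{\Pi(\nu)}+\tau\log\frac{\mathrm{d}\Pi(\nu)}{\mathrm{d}\eta}\bigr)$ uniformly, insert it into the policy-gradient formula of Lemma \ref{lem:func_deriv_J} together with Lemma \ref{lem:func_deriv_pi} to get $C_F$, and then split the difference of flat derivatives into the $\theta$-increment plus three $\nu$-increments (occupancy kernel, $\bar Q$, and $\frac{\delta\Pi}{\delta\nu}$), which is exactly the paper's decomposition into \eqref{oneterm} and $I_1,\dots,I_4$. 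Your two substitutions are legitimate alternatives: re-deriving the occupancy-kernel Lipschitz estimate by telescoping in the Banach algebra $b\mathcal K(S|S)$ replaces the citation of \cite[Corollary A.5]{leahy}, and a performance-difference argument for $Q^{\Pi(\nu')}_\tau-Q^{\Pi(\nu)}_\tau$ replaces the paper's route, which writes this difference as $\delta\int_S(\hat V_\tau(\nu')-\hat V_\tau(\nu))(s')P(\mathrm{d}s'|s,a)$ and then exploits the already-established $\theta$-Lipschitz bound via the flat-derivative fundamental theorem of calculus and Kantorovich duality \eqref{eq:KRW1}.

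The genuine gap is in the constants, which are the actual content of the proposition. As stated, your intermediate bounds are too loose to reproduce $C_F$ and $L_F$. For $C_F$: bounding $|Q_\tau^{\pi_\nu}|\le\frac{1}{1-\delta}(|c|+\tau K)$ with $K\coloneqq 2|f|_{\mathfrak C^0}+|\log\eta(A)|$ and then adding $\tau K$ separately gives $\frac{2|f|_{\mathfrak C^0}}{1-\delta}\bigl(\frac{|c|+\tau K}{1-\delta}+\tau K\bigr)$, strictly larger than the stated $C_F=\frac{2|f|_{\mathfrak C^0}}{(1-\delta)^2}(|c|+\tau K)$; one must bound the sum jointly, $|Q_\tau^{\pi_\nu}+\tau\log\frac{\mathrm{d}\pi_\nu}{\mathrm{d}\eta}|\le|c|+\tau K+\delta|V_\tau^{\pi_\nu}|\le\frac{1}{1-\delta}(|c|+\tau K)$, using the factor $\delta$ in the Bellman recursion. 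For $L_F$: the telescoping you propose actually yields $|d^{\pi'}-d^{\pi}|\le\frac{\delta}{1-\delta}\sup_s|\pi'(\cdot|s)-\pi(\cdot|s)|_{\mathrm{TV}}$ (the $n=0$ term vanishes and $\sum_{n\ge1}n\delta^{n}=\frac{\delta}{(1-\delta)^2}$), not the $\frac{1}{1-\delta}$ you wrote, and the $Q$-difference likewise carries a factor $\delta$ because the running cost cancels. The stated $L_F$ hinges on precisely these $\delta$'s: the paper sums $\frac{\delta}{1-\delta}+1+\delta$ and uses $-\delta^2+\delta+1\le\frac{5}{4}$ to obtain the factor $\frac{5}{(1-\delta)^3}$, whereas with your loosened bounds the sum becomes of order $\frac{1}{1-\delta}+2$, giving roughly $\frac{12}{(1-\delta)^3}$ for small $\delta$, which overshoots the claimed constant. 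So the approach is sound and would deliver the exact constants if the $\delta$ factors are retained, but the "bookkeeping" you defer is exactly where the stated $C_F$ and $L_F$ are earned; as written, the proposal only proves the proposition with larger constants.
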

\begin{proof}
    For given $(s,a) \in S \times A$ and $\nu \in \mathcal{P}_1(\mathbb R^d),$ let
    \begin{equation*}
        \bar{Q}^{\Pi(\nu)}_\tau(s,a) \coloneqq \frac{1}{1-\delta}\left(Q^{\Pi(\nu)}_\tau(s,a) + \tau\log \frac{\mathrm{d}\Pi(\nu)}{\mathrm{d}\eta}(a|s)\right).
    \end{equation*}
    Since $c \in B_b(S\times A),$ $f \in \mathfrak C^0$ and $\eta \in \mathcal{M}_+(A),$ it follows by \cite[Lemma A.4]{leahy} that 
    \begin{equation}
    \label{eq:bar Q bound}
        \left|\bar{Q}^{\Pi(\nu)}_\tau(s,a)\right| \leq \frac{1}{(1-\delta)^2}\left(|c|_{B_b(S \times A)} + \tau\left(2|f|_{\mathfrak C^0} + |\log \eta(A)|\right)\right),
    \end{equation}
    for all $\nu \in \mathcal{P}_1(\mathbb R^d),$ $s \in S$ and $a \in A$ $\eta-$a.e. Therefore, by Lemma \ref{lem:func_deriv_J}, we obtain
    \begin{align*}
        \left|\frac{\delta \hat{V}_{\tau}}{\delta \nu}(\nu,\theta)(\gamma)\right| &\leq \int_S\int_A \left|\bar{Q}^{\Pi(\nu)}_\tau(s,a)\right|\left|f(\theta,s, a)-\int_A f(\theta,s,a')\Pi(\nu)(\mathrm{d}a'|s)\right|\Pi(\nu)(\mathrm{d}a|s)d^{\Pi(\nu)}_\gamma(\mathrm{d}s)\\
        &\leq \frac{2}{(1-\delta)^2}\left(|c|_{B_b(S \times A)} + \tau\left(2|f|_{\mathfrak C^0} + |\log \eta(A)|\right)\right)|f|_{\mathfrak C^0} =: C_F,
    \end{align*}
    for all $\gamma \in \mathcal{P}(S)$ and $(\nu,\theta) \in \mathcal{P}_1(\mathbb R^d) \times \mathbb R^d,$ which proves that $\frac{\delta \hat{V}_{\tau}}{\delta \nu}$ is uniformly bounded.

    Next we show that, given $\gamma \in \mathcal{P}(S),$ $(\nu,\theta) \mapsto \frac{\delta \hat{V}_{\tau}}{\delta \nu}(\nu,\theta)(\gamma)$ is Lipschitz. For any $(\nu',\theta'), (\nu,\theta) \in \mathcal{P}_1(\mathbb R^d) \times \mathbb R^d,$ we have
    \begin{multline}
    \begin{aligned}
    \label{first+second}
        &\left|\frac{\delta \hat{V}_{\tau}}{\delta \nu}(\nu',\theta')(\gamma) - \frac{\delta \hat{V}_{\tau}}{\delta \nu}(\nu,\theta)(\gamma)\right|\\ &\leq \left|\frac{\delta \hat{V}_{\tau}}{\delta \nu}(\nu',\theta')(\gamma) - \frac{\delta \hat{V}_{\tau}}{\delta \nu}(\nu',\theta)(\gamma)\right| + \left|\frac{\delta \hat{V}_{\tau}}{\delta \nu}(\nu',\theta)(\gamma) - \frac{\delta \hat{V}_{\tau}}{\delta \nu}(\nu,\theta)(\gamma)\right|.
    \end{aligned}
    \end{multline}
    Note that since $f \in \mathfrak C^1,$ we have
    \begin{equation*}
        \left|f(\theta',s,a) - f(\theta,s,a)\right| \leq |f|_{\mathfrak C^1}|\theta'-\theta|,
    \end{equation*}
    for all $\theta \in \mathbb R^d,$ $s \in S$ and $a \in A$ $\eta-$a.e. Then, for the first term in \eqref{first+second}, we obtain
    \begin{multline}
    \begin{aligned}
    \label{oneterm}
        &\left|\frac{\delta \hat{V}_{\tau}}{\delta \nu}(\nu',\theta')(\gamma) - \frac{\delta \hat{V}_{\tau}}{\delta \nu}(\nu',\theta)(\gamma)\right|\\ &= \left|\int_{S}\int_{A}\bar{Q}^{\Pi(\nu')}_{\tau}(s,a)\left(\frac{\delta \Pi}{\delta \nu}(\nu',\theta')(\mathrm{d}a|s) - \frac{\delta \Pi}{\delta \nu}(\nu',\theta)(\mathrm{d}a|s)\right)d^{\Pi(\nu')}_{\gamma}(\mathrm{d}s)\right|\\
        &= \Bigg|\int_{S}\int_{A}\bar{Q}^{\Pi(\nu')}_{\tau}(s,a)\Bigg(f(\theta',s,a) - f(\theta,s,a) \\
        &- \int_A \left(f(\theta',s,a')-f(\theta,s,a')\right)\Pi(\nu')(\mathrm{d}a'|s)\Bigg)\Pi(\nu')(\mathrm{d}a|s)d^{\Pi(\nu')}_{\gamma}(\mathrm{d}s)\Bigg|\\
        &\leq \int_{S}\int_{A}|\bar{Q}^{\Pi(\nu')}_{\tau}(s,a)|\Big(|f(\theta',s,a) - f(\theta,s,a)| \\
        &+ \int_A \left|f(\theta',s,a')-f(\theta,s,a')\right|\Pi(\nu')(\mathrm{d}a'|s)\Big)\Pi(\nu')(\mathrm{d}a|s)d^{\Pi(\nu')}_{\gamma}(\mathrm{d}s)\\
        &\leq \frac{2|f|_{\mathfrak C^1}}{(1-\delta)^2}\left(|c|_{B_b(S \times A)} + \tau\left(2|f|_{\mathfrak C^0} + |\log \eta(A)|\right)\right)|\theta'-\theta|.
    \end{aligned}
    \end{multline}
    for all $\nu' \in \mathcal{P}_1(\mathbb R^d)$ and all $\theta',\theta \in \mathbb R^d.$
    
    For the second term in \eqref{first+second}, for $\theta\in \mathbb R^d$ and $\nu,\nu'\in \mathcal{P}_1(\mathbb{R}^d)$, we have
	\begin{multline*}
        \begin{aligned}
		&\frac{\delta \hat{V}_{\tau}}{\delta \nu}(\nu',\theta)(\gamma)- \frac{\delta \hat{V}_\tau}{\delta \nu}(\nu,\theta)(\gamma)\\
        &=\int_{S}\int_{A}\bar{Q}^{\Pi(\nu')}_{\tau}(s,a) \frac{\delta \Pi}{\delta \nu}(\nu',\theta)(da|s)[d^{\Pi(\nu')}_{\gamma}-d^{\Pi(\nu)}_{\gamma}](\mathrm{d}s) (:=I_1)\\
			&+\int_{S}\int_{A}\left[\bar{Q}^{\Pi(\nu')}_{\tau}(s,a)-\bar{Q}^{\Pi(\nu)}_{\tau}(s,a)\right] \frac{\delta \Pi}{\delta \nu}(\nu',\theta)(\mathrm{d}a|s)d^{\Pi(\nu)}_{\gamma}(\mathrm{d}s)(:=I_2)\\
			&+ \int_{S}\int_{A}\bar{Q}^{\Pi(\nu)}_{\tau}(s,a)\left[\frac{\delta \Pi}{\delta \nu}(\nu',\theta)(\mathrm{d}a|s)-  \frac{\delta \Pi}{\delta \nu}(\nu,\theta)(\mathrm{d}a|s)\right]d^{\Pi(\nu)}_{\gamma}(\mathrm{d}s)\\
			&= I_1+I_2 +\int_{S}\int_{A}\bar{Q}^{\Pi(\nu)}_{\tau}(s,a)\int_{A} f(\theta,s,a')[\Pi(\nu)-\Pi(\nu')](\mathrm{d}a'|s)\Pi(\nu)(\mathrm{d}a|s)d^{\Pi(\nu)}_{\gamma}(\mathrm{d}s)(:=I_3)\\
			&+ \int_{S}\int_{A}\bar{Q}^{\Pi(\nu)}_{\tau}(s,a)\left(  f(\theta,s,a) -\int_{A}  f(\theta,s,a')\Pi(\nu')(\mathrm{d}a'|s)\right)[\Pi(\nu')-\Pi(\nu)](\mathrm{d}a|s)d^{\Pi(\nu)}_{\gamma}(\mathrm{d}s)(:=I_4).
        \end{aligned}
	\end{multline*} 
    By \cite[Corollary A.5]{leahy},
    we have for all $\nu', \nu \in \mathcal{P}_1(\mathbb R^d)$ that
    \begin{equation*}
	|d^{\Pi(\nu')}-d^{\Pi(\nu)}|_{b\mathcal{K}(S|S)}\le 2|f|_{\mathfrak C^1}\frac{\delta}{1-\delta} \mathcal{W}_1(\nu',\nu).
    \end{equation*}
    Hence, using \eqref{eq:bar Q bound}, we obtain
    \begin{equation}
    \label{I1}
        |I_1| \leq \frac{4\delta}{(1-\delta)^3}\left(|c|_{B_b(S \times A)} + \tau\left(2|f|_{\mathfrak C^0} + |\log \eta(A)|\right)\right)|f|_{\mathfrak C^0}|f|_{\mathfrak C^1}\mathcal{W}_1(\nu',\nu).
    \end{equation}
    By the proof of \cite[Corollary A.5]{leahy}, we have for all $\nu', \nu \in \mathcal{P}_1(\mathbb R^d)$ that
    \begin{equation}
    \label{eq:lip-softmax}
	|\Pi(\nu') - \Pi(\nu)|_{b\mathcal{K}(A|S)}\le 2|f|_{\mathfrak C^1}\mathcal{W}_1(\nu',\nu).
    \end{equation}
    Hence, by the definition of the $b\mathcal{K}(A|S)-$norm (see \eqref{eq:kernel norm}), for any measurable $h:\mathbb{R}^d\times S\times A\rightarrow \mathbb{R},$ we have 
	$$
	\int_{A}h(\theta,s,a)(\Pi(\nu')-\Pi(\nu))(\mathrm{d}a|s) \le 2|f|_{\mathfrak C^1} |h|_{B_b(A)} \mathcal{W}_1(\nu',\nu)\,.
	$$
	Thus, using \eqref{eq:lip-softmax} and \eqref{eq:bar Q bound}, we deduce
	\begin{equation}
    \label{I3+I4}
	|I_3+I_4| \le \frac{4}{(1-\delta)^2} \left(|c|_{B_b(S\times A)}+\tau \left(2|f|_{\mathfrak C^0} +| \log \eta(A)|\right)\right) |f|_{\mathfrak C^0}|f|_{\mathfrak C^1}\mathcal{W}_1(\nu',\nu).
	\end{equation}
    To estimate $I_2,$ applying Definition \ref{def:flat-derivative} we observe that for all $(s,a) \in S \times A,$ we have
    \begin{align*}
		&\log\frac{\mathrm{d}\Pi(\nu')}{\mathrm{d}\eta}(a|s)-\log \frac{\mathrm{d}\Pi(\nu)}{\mathrm{d}\eta}(a|s)\\
		&=\int_0^1\int_{\mathbb{R}^d}\left(f(\theta,s,a)-\int_{A}f(\theta,s,a')\Pi(\nu + \varepsilon(\nu'-\nu))(\mathrm{d}a'|s)\right)(\nu'-\nu)(\mathrm{d}\theta)\mathrm{d}\varepsilon\\
		&\le 2|f|_{\mathfrak C^1}\mathcal{W}_1(\nu',\nu).
	\end{align*}
    Now, taking $\gamma = \delta_s$ in $\hat{V}_\tau(\nu)(\gamma) =\int_S \hat{V}_{\tau}(\nu)(s)\gamma(\mathrm{d}s)$ gives
    \begin{equation*}
        \hat{V}_{\tau}(\nu)(s) = \hat{V}_\tau(\nu)(\delta_s),
    \end{equation*}
    for all $s \in S,$ and therefore
    \begin{align*}
	&Q^{\Pi(\nu')}_{\tau}(s,a)-Q^{\Pi(\nu)}_{\tau}(s,a)\\
    &=\delta\int_{S}(\hat{V}_{\tau}(\nu')(s')-\hat{V}_{\tau}(\nu)(s'))P(\mathrm{d}s'|s,a)\\
	&=\delta\int_{S}\int_0^1\int_{\mathbb{R}^d}\frac{\delta \hat{V}_{\tau}}{\delta \nu}(\nu+\varepsilon(\nu'-\nu),\theta)(\delta_{s'})(\nu'-\nu)(\mathrm{d}\theta)\mathrm{d}\varepsilon P(\mathrm{d}s'|s,a)\\
	&\le \delta \mathcal{W}_1(\nu',\nu)\frac{2}{(1-\delta)^2} \left(|c|_{B_b(S\times A)}+\tau \left(2|f|_{\mathfrak C^0} +| \log \eta(A)|\right)\right) |f|_{\mathfrak C^1}\int_S P(\mathrm{d}s'|s,a)\\
	&= \frac{2\delta }{(1-\delta)^2} \left(|c|_{B_b(S\times A)}+\tau \left(2|f|_{\mathfrak C^0} +| \log \eta(A)|\right)\right) |f|_{\mathfrak C^1}\mathcal{W}_1(\nu',\nu),
	\end{align*}
    where the first inequality follows from the fact that the map 
    $$\theta \mapsto \frac{\delta \hat{V}_{\tau}}{\delta \nu}(\nu+\varepsilon(\nu'-\nu),\theta)(\delta_{s'})$$ is Lipschitz due to \eqref{oneterm} and the definition of $\mathcal{W}_1$ (see \eqref{eq:KRW1}), and last equality follows from the fact that $P \in \mathcal{P}(S|S \times A).$
    Hence, we obtain
    \begin{equation}
    \label{I2}
        |I_2| \le  4\left( \frac{\delta }{(1-\delta)^2} \left(|c|_{B_b(S\times A)}+\tau \left(2|f|_{\mathfrak C^0} +| \log \eta(A)|\right)\right)+\tau\right)|f|_{\mathfrak C^0}|f|_{\mathfrak C^1}\mathcal{W}_1(\nu',\nu).
    \end{equation}
    Putting together \eqref{I1}, \eqref{I3+I4} and \eqref{I2} yields that
    \begin{multline}
    \begin{aligned}
    \label{secondterm}
        &\left|\frac{\delta \hat{V}_{\tau}}{\delta \nu}(\nu',\theta)(\gamma)- \frac{\delta \hat{V}_\tau}{\delta \nu}(\nu,\theta)(\gamma)\right|\\ 
        &\leq  \frac{4}{(1-\delta)^2}\left(\frac{\delta}{1-\delta} + 1 + \delta\right) \left(|c|_{B_b(S\times A)}+\tau \left(2|f|_{\mathfrak C^0} +| \log \eta(A)|\right)\right) |f|_{\mathfrak C^0}|f|_{\mathfrak C^1}\mathcal{W}_1(\nu',\nu)\\
        &+4\tau|f|_{\mathfrak C^0}|f|_{\mathfrak C^1}\mathcal{W}_1(\nu',\nu)\\
        &=\frac{4}{(1-\delta)^3}\left(-\left(\frac{1}{2}-\delta\right)^2 + \frac{5}{4}\right) \left(|c|_{B_b(S\times A)}+\tau \left(2|f|_{\mathfrak C^0} +| \log \eta(A)|\right)\right) |f|_{\mathfrak C^0}|f|_{\mathfrak C^1}\mathcal{W}_1(\nu',\nu)\\
        &+4\tau|f|_{\mathfrak C^0}|f|_{\mathfrak C^1}\mathcal{W}_1(\nu',\nu)\\
        &\leq \left(\frac{5}{(1-\delta)^3}\left(|c|_{B_b(S\times A)}+\tau \left(2|f|_{\mathfrak C^0} +| \log \eta(A)|\right)\right) + 4\tau\right) |f|_{\mathfrak C^0}|f|_{\mathfrak C^1}\mathcal{W}_1(\nu',\nu).
    \end{aligned}
    \end{multline}
    for all $\nu',\nu \in \mathcal{P}_1(\mathbb R^d)$ and $\theta \in \mathbb R^d.$
    Combining \eqref{oneterm} and \eqref{secondterm} with \eqref{first+second} gives 
    \begin{multline*}
    \begin{aligned}
        &\left|\frac{\delta \hat{V}_{\tau}}{\delta \nu}(\nu',\theta')(\gamma) - \frac{\delta \hat{V}_{\tau}}{\delta \nu}(\nu,\theta)(\gamma)\right|\\ 
        &\leq \frac{2|f|_{\mathfrak C^1}}{(1-\delta)^2}\left(|c|_{B_b(S \times A)} + \tau\left(2|f|_{\mathfrak C^0} + |\log \eta(A)|\right)\right)|\theta'-\theta|\\
        &+ \left(\frac{5}{(1-\delta)^3}\left(|c|_{B_b(S\times A)}+\tau \left(2|f|_{\mathfrak C^0} +| \log \eta(A)|\right)\right) + 4\tau\right) |f|_{\mathfrak C^0}|f|_{\mathfrak C^1}\mathcal{W}_1(\nu',\nu)\\
        &\leq |f|_{\mathfrak C^1}\max\Bigg\{\frac{2}{(1-\delta)^2}\left(|c|_{B_b(S \times A)} + \tau\left(2|f|_{\mathfrak C^0} + |\log \eta(A)|\right)\right),\\ 
        &\left(\frac{5}{(1-\delta)^3}\left(|c|_{B_b(S\times A)}+\tau \left(2|f|_{\mathfrak C^0} +| \log \eta(A)|\right)\right) + 4\tau\right) |f|_{\mathfrak C^0}\Bigg\}\left(|\theta'-\theta| + \mathcal{W}_1(\nu',\nu)\right)\\
        &\leq |f|_{\mathfrak C^1}\Bigg(\max\Bigg\{\frac{2}{(1-\delta)^2}\left(|c|_{B_b(S \times A)} + \tau\left(2|f|_{\mathfrak C^0} + |\log \eta(A)|\right)\right),\\ 
        &\frac{5}{(1-\delta)^3}\left(|c|_{B_b(S\times A)}+\tau \left(2|f|_{\mathfrak C^0} +| \log \eta(A)|\right)\right)
        |f|_{\mathfrak C^0}\Bigg\} +4\tau|f|_{\mathfrak C^0}\Bigg)\left(|\theta'-\theta| + \mathcal{W}_1(\nu',\nu)\right)\\
        &=|f|_{\mathfrak C^1}\Bigg(\frac{1}{(1-\delta)^2}\left(|c|_{B_b(S \times A)} + \tau\left(2|f|_{\mathfrak C^0} + |\log \eta(A)|\right)\right)\max\Bigg\{2,\frac{5}{1-\delta}|f|_{\mathfrak C^0}\Bigg\} +4\tau|f|_{\mathfrak C^0}\Bigg)\times\\
        &\times\left(|\theta'-\theta| + \mathcal{W}_1(\nu',\nu)\right),
    \end{aligned}
    \end{multline*}
    for all $\nu',\nu \in \mathcal{P}_1(\mathbb R^d)$ and $\theta',\theta \in \mathbb R^d,$ where the last inequality follows from the standard estimate $\max\{a, c+d\} \leq d + \max\{a,c\}.$ Hence, we set
    \begin{equation*}
        L_F \coloneqq |f|_{\mathfrak C^1}\Bigg(\frac{1}{(1-\delta)^2}\left(|c|_{B_b(S \times A)} + \tau\left(2|f|_{\mathfrak C^0} + |\log \eta(A)|\right)\right)\max\Bigg\{2,\frac{5}{1-\delta}|f|_{\mathfrak C^0}\Bigg\} +4\tau|f|_{\mathfrak C^0}\Bigg).
    \end{equation*}
\end{proof}

\begin{remark}
\label{rmk:choice-of-eta}
    If $A$ is a finite action space, a natural choice for $\eta$ is the uniform distribution over $A.$ In contrast, when $A$ is continuous, $\eta$ can be taken as a Gaussian or any other probability measure on $A.$ Assuming $\eta \in \mathcal{P}(A)$, it follows that $|\log \eta(A)| = 0$. Note that the constants $C_F$ and $L_F$ in Proposition \ref{prop:verification-prop} scale linearly with $\tau$. Therefore, as indicated by condition \eqref{eq:contractivity_condition}, increasing $\tau$ necessitates a corresponding increase in $\sigma$. This is intuitive, since the mapping $\nu \mapsto \operatorname{KL}(\pi_\nu|\eta)$ is non-convex. As a result, choosing a large $\tau$ in \eqref{eq:V-alternative} amplifies the non-convexity of $\hat{V}_\tau(\nu)(\gamma)$, requiring a larger $\sigma$ to mitigate it.
\end{remark}
\begin{algorithm}
\label{alg:br-bandit}
\caption{Langevin-based Best Response for bandits}
\LinesNumbered
\KwIn{cost function $c,$ activation function \(f\), potential \(U^\xi\), reference measure $\eta$ chosen according to Remark \ref{rmk:choice-of-eta}, regularization parameter \(\sigma\) as prescribed by \eqref{eq:contractivity_condition}, where $C_F, L_F$ are explicitly computed in Proposition \ref{prop:verification-prop}, \(\tau\),
initial distribution of parameters \(\nu_0\),
outer and inner step sizes \(h_\text{out}\) and \(h_\text{in}\), horizons \(T\) and \(K\),
learning rate \(\alpha > 0\), and number of parameters \(N\).}
Generate i.i.d. \((\theta_0^i)_{i=1}^N \sim \nu_0\) and set \((\theta_{0,0}^i)_{i=1}^N \coloneqq (\theta_0^i)_{i=1}^N\); \\
\For{\rm \(t=0\),..., \(T-1\)}{
\For{\rm \(k=0\), \(\ldots\,\), \(K-1\)}{
\For{\rm\(i = 1\), \(2\), \(\ldots\,\), \(N\)}{
\(\theta^i_{t,k+1} =
\theta^i_{t,k} - h_\text{in}\big(\nabla_{\theta} \frac{\delta V_{\tau}^{\pi_\nu}}{\delta \nu}(\nu_{t,k}^N,\theta^i_{t,k})
+ \sigma \nabla_{\theta} U^\xi(\theta^i_{t,k}) \big)
+ \sqrt{2\sigma h_\text{in}}\,\mathcal{N}^i_{t,k}\);
}
}
{
$\nu_{t + 1}^N
= (1 - \alpha h_\text{out})\nu_{t + 1}^N
+ \alpha h_\text{out} \Psi_{\sigma}[\nu^N_t];$}
}
\KwOut{\(\nu^N_T = \frac{1}{N}\sum_{i=1}^N\delta_{\theta_T^i}\).}
\end{algorithm}
\subsection{Two-player zero-sum Markov game}
\label{sec:markov}
We demonstrate how our results on min-max games can be applied to policy optimization in a two-player Markov game, thus extending the examples proposed in \cite[Section 5]{kim2024symmetric} from convex-concave to non-convex-non-concave value function with softmax parametrized policies. 

The setup is largely analogous to the MDP setting in Section \ref{sec:mdp}. The goal of both players is to learn a policy that constitutes a Nash equilibrium of the game. For given strategies $\pi,\zeta:S \to \mathcal{P}^{\eta}(A),$ the $(\tau_1,\tau_2)$-entropy regularized total expected cumulative cost is given by
\begin{equation*}
    V_{\tau_1,\tau_2}^{\pi,\zeta}(s) \coloneqq \mathbb E_s^{\pi, \zeta}\left[\sum_{n=0}^{\infty} \delta^n \left(c(s_n, a_n,b_n) + \tau_1 \log \frac{\mathrm{d}\pi}{\mathrm{d}\eta}(a_n|s_n) - \tau_2 \log \frac{\mathrm{d}\zeta}{\mathrm{d}\eta}(b_n|s_n)\right)\right].
\end{equation*}
For a fixed initial distribution $\gamma \in \mathcal{P}(S),$ the players' objective is then to solve 
\begin{equation}
\label{eq:Markov}
    \min_{\pi \in \mathcal{P}^{\eta}(A|S)} \max_{\zeta \in \mathcal{P}^{\eta}(A|S)} V_{\tau_1,\tau_2}^{\pi,\zeta}(\gamma), \text{ with } V_{\tau_1,\tau_2}^{\pi,\zeta}(\gamma) \coloneqq \int_S V_{\tau_1,\tau_2}^{\pi,\zeta}(s)\gamma(\mathrm{d}s).
\end{equation}
Assume that there exists a unique MNE $(\pi_{\tau_1}^*, \zeta_{\tau_2}^*)$ of the game \eqref{eq:Markov} independent of $\gamma$, i.e., there exists $(\pi_{\tau_1}^*, \zeta_{\tau_2}^*)$ such that
\begin{equation*}
    \min_{\pi \in \mathcal{P}^{\eta}(A|S)} \max_{\zeta \in \mathcal{P}^{\eta}(A|S)} V_{\tau_1,\tau_2}^{\pi,\zeta}(\gamma) = \max_{\zeta \in \mathcal{P}^{\eta}(A|S)} \min_{\pi \in \mathcal{P}^{\eta}(A|S)} V_{\tau_1,\tau_2}^{\pi,\zeta}(\gamma) = V_{\tau_1,\tau_2}^{\pi_{\tau_1}^*, \zeta_{\tau_2}^*}(\gamma),
\end{equation*}
and it is given by
\begin{equation*}
    \pi_{\tau_1}^*(\mathrm{d}a|s) \propto \exp\left(-\frac{1}{\tau_1}\int_A Q_{\tau_1,\tau_2}^{\pi_{\tau_1}^*, \zeta_{\tau_2}^*}(s,a)\zeta_{\tau_2}^*(\mathrm{d}b|s)\right)\eta(\mathrm{d}a),
\end{equation*}
\begin{equation*}
    \zeta_{\tau_2}^*(\mathrm{d}b|s) \propto \exp\left(\frac{1}{\tau_2}\int_A Q_{\tau_1,\tau_2}^{\pi_{\tau_1}^*, \zeta_{\tau_2}^*}(s,a)\pi_{\tau_1}^*(\mathrm{d}a|s)\right)\eta(\mathrm{d}b).
\end{equation*}
For example, such MNE exists for Markov games with finite state and action spaces \cite{shap}.

To avoid requiring the players to search for the MNE $(\pi_{\tau_1}^*(\cdot|s), \zeta_{\tau_2}^*(\cdot|s))$ at each state $s \in S$ across the full space of probability measures, we assume that the strategies $\pi_{\nu}$ and $\zeta_{\mu}$ are of the softmax form $$\pi_{\nu}(\mathrm{d}a|s) \propto \exp{\left(f_\nu(s,a)\right)}\eta(\mathrm{d}a)$$ and $$\zeta_{\mu}(\mathrm{d}b|s) \propto \exp{\left(g_\mu(s,b)\right)}\eta(\mathrm{d}b),$$ where $f_\nu$ and $g_\mu$ are mean-field neural networks with activation functions $f$, $g: \mathbb R^d \times S \times A \to \mathbb R$, cf.\ equation \eqref{eq:softmax}.
Thus, the goal of the players is now to find a global MNE of the game
\begin{equation}
\label{eq:Markov-2}
\min_{\nu \in \mathcal{P}_1(\mathbb{R}^d)} \max_{\mu \in \mathcal{P}_1(\mathbb{R}^d)} V_{\tau_1,\tau_2}^{\pi_{\nu},\zeta_{\mu}}(\gamma).
\end{equation}
Owing to the presence of the normalization constant in \eqref{eq:softmax}, the value function $(\nu,\mu) \mapsto V_{\tau_1,\tau_2}^{\pi_{\nu},\zeta_{\mu}}(\gamma)$ is non-convex-non-concave. Consequently, convergence of gradient flows to a global MNE cannot generally be expected without additional entropic regularization with respect to $\nu$ and $\mu$ (cf. \eqref{eq:mean-field-min-problem}) as supported by our theoretical findings. In contrast, \cite{kim2024symmetric} considered the setting with non-parametric policies in \eqref{eq:Markov}, solving the problem by directly optimizing over the policies. This approach requires computing the MNE at each individual state, which can be computationally infeasible for large state spaces. By instead parametrizing the policies using the softmax form in \eqref{eq:softmax} and optimizing over these parameters as in \eqref{eq:Markov-2}, the players are able to learn the MNE of \eqref{eq:Markov} across all states simultaneously. Proposition \ref{prop:verification-prop-game} states that $(\nu,\mu) \mapsto V_{\tau_1,\tau_2}^{\pi_{\nu},\zeta_{\mu}}$ satisfies Assumption \ref{ass:lip-flat-game}, and hence all our main results (Theorem \ref{thm:wass-contraction-game}, Corollary \ref{cor:uniqueness-MNE}, Theorem \ref{thm:stability}) and Theorem \ref{thm:conv} apply to min-max problems \eqref{eq:mean-field-min-problem} corresponding to energy functions $F^{\sigma_\nu, \sigma_\mu}(\nu,\mu) := V_{\tau_1,\tau_2}^{\pi_{\nu},\zeta_\mu}(\gamma) + \sigma_\nu \operatorname{KL}(\nu | \xi)-\sigma_\mu \operatorname{KL}(\mu|\rho)$ for any reference measures $\xi,\rho$ satisfying Assumption \ref{ass:abs-cty-pi-game}.

As in the previous subsection, we state that the value function $V_{\tau_1,\tau_2}$ in the Markov games setting satisfies Assumption \ref{ass:lip-flat-game}. 
\begin{proposition}
\label{prop:verification-prop-game}
Let $S$ and $A$ be Polish spaces, $c \in B_b(S \times A \times A),$ $\eta \in \mathcal{M}_+(A),$ and $\tau_1,\tau_2 > 0.$ Then for all $\gamma \in \mathcal{P}(S)$ the function $F(\nu,\mu) \coloneqq V_{\tau_1,\tau_2}^{\pi_\nu, \zeta_\mu}(\gamma)$ satisfies Assumption \ref{ass:lip-flat-game} with constants $C_F,L_F > 0$ depending on $\delta, \tau_1, |c|_{B_b(S \times A\times A)}, |f|_{\mathfrak C^0}, |f|_{\mathfrak C^1}, |\log \eta(A)|,$ as in Proposition \ref{prop:verification-prop}, and constants $\bar{C}_F, \bar{L}_F > 0$ depending in an analogous way on $\delta, \tau_2, |c|_{B_b(S \times A\times A)}, |g|_{\mathfrak C^0}, |g|_{\mathfrak C^1}, |\log \eta(A)|.$
\begin{proof}
    The proof follows the same argument as the proof of Proposition \ref{prop:verification-prop}.
\end{proof}
\end{proposition}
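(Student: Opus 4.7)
The plan is to mirror the single-agent argument in Proposition \ref{prop:verification-prop} step by step, treating the two players essentially symmetrically. First, I would derive a Bellman-type integral representation analogous to \eqref{eq:V-alternative}. Introducing the joint state-transition kernel
\begin{equation*}
P_{\pi,\zeta}(\mathrm{d}s'|s) \coloneqq \int_A\int_A P(\mathrm{d}s'|s,a,b)\pi(\mathrm{d}a|s)\zeta(\mathrm{d}b|s)
\end{equation*}
and the corresponding occupancy kernel $d^{\pi,\zeta}$, an iterated-Bellman argument yields
\begin{equation*}
V^{\pi,\zeta}_{\tau_1,\tau_2}(\gamma) = \frac{1}{1-\delta}\int_S\!\int_A\!\int_A \Big[c(s',a,b)+\tau_1\log\tfrac{\mathrm{d}\pi}{\mathrm{d}\eta}(a|s')-\tau_2\log\tfrac{\mathrm{d}\zeta}{\mathrm{d}\eta}(b|s')\Big]\pi(\mathrm{d}a|s')\zeta(\mathrm{d}b|s')\,d^{\pi,\zeta}_\gamma(\mathrm{d}s').
\end{equation*}

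Next, I would compute the flat derivatives $\delta F/\delta\nu$ and $\delta F/\delta\mu$ by the same policy-gradient reasoning behind Lemma \ref{lem:func_deriv_J}. Because $\zeta_\mu$ does not depend on $\nu$, only the $\pi_\nu$-factor and the occupancy measure $d^{\pi_\nu,\zeta_\mu}_\gamma$ vary with $\nu$, and one obtains
\begin{equation*}
\frac{\delta F}{\delta\nu}(\nu,\mu,\theta) = \frac{1}{1-\delta}\int_S\!\int_A \bar Q^{\pi_\nu,\zeta_\mu}_{\tau_1,\tau_2}(s,a)\,\frac{\delta\Pi}{\delta\nu}(\nu,\theta)(\mathrm{d}a|s)\,d^{\pi_\nu,\zeta_\mu}_\gamma(\mathrm{d}s),
\end{equation*}
where $\bar Q^{\pi,\zeta}_{\tau_1,\tau_2}$ is the $\zeta$-averaged regularized state-value function, and analogously for $\delta F/\delta\mu$ with $g_\mu$, $\tau_2$ and a sign flip. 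The structural form is identical to \eqref{eq:func_deriv_J}, with $f$ replaced by $g$ in the $\mu$-derivative.

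The uniform bounds $|\delta F/\delta\nu|\le C_F$, $|\delta F/\delta\mu|\le\bar C_F$ then follow immediately from the $B_b$-bound on $\bar Q^{\pi,\zeta}_{\tau_1,\tau_2}$ (obtained exactly as in \cite[Lemma A.4]{leahy} using boundedness of $c$, $f$, $g$ and $\log\eta(A)$) combined with the pointwise bound $|\delta\Pi/\delta\nu|\le 2|f|_{\mathfrak C^0}$ coming from \eqref{eq:func_deriv_pi}. The Lipschitz estimate follows the same decomposition as in \eqref{first+second} for each argument: one splits as $|\delta F/\delta\nu(\nu',\mu',\theta')-\delta F/\delta\nu(\nu,\mu,\theta)| \le |\cdot\text{(shift }\theta\text{)}| + |\cdot\text{(shift }\nu\text{)}| + |\cdot\text{(shift }\mu\text{)}|$. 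The $\theta$-shift uses $f\in\mathfrak C^1$ directly; each measure-shift term decomposes into three pieces $I_1+I_2+I_3+I_4$ as in the proof of Proposition \ref{prop:verification-prop}, controlling respectively the difference of occupancy measures, of $Q$-functions, and of the softmax policies.

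The main obstacle will be the cross-Lipschitz control of $d^{\pi_\nu,\zeta_\mu}$ and $Q^{\pi_\nu,\zeta_\mu}$ with respect to the \emph{other} player's measure. For $d^{\pi,\zeta}$, joint Lipschitzness in $(\nu,\mu)$ in the $b\mathcal K(S|S)$-norm can be obtained by extending \cite[Corollary A.5]{leahy}: one uses the resolvent identity for $(I-\delta P_{\pi,\zeta})^{-1}$ and bounds $|P_{\pi_{\nu'},\zeta_{\mu'}}-P_{\pi_\nu,\zeta_\mu}|_{b\mathcal K(S|S)}$ by $|\Pi(\nu')-\Pi(\nu)|_{b\mathcal K(A|S)}+|Z(\mu')-Z(\mu)|_{b\mathcal K(A|S)}$, each of which is Lipschitz via \eqref{eq:lip-softmax} applied to $f$ and to $g$. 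The $Q$-function Lipschitz estimate then follows by the same iteration $Q^{\pi',\zeta'}-Q^{\pi,\zeta}=\delta\int_S(V^{\pi',\zeta'}-V^{\pi,\zeta})(s')P(\mathrm{d}s'|s,a,b)$ combined with the inductive bound on $\delta F/\delta\nu$, $\delta F/\delta\mu$ already obtained in the $\theta$-shift step, invoking the integral representation $V^{\pi',\zeta'}-V^{\pi,\zeta}=\int_0^1\!\int \delta F/\delta\nu\,(\nu'-\nu)(\mathrm{d}\theta)\,\mathrm{d}\varepsilon+\int_0^1\!\int\delta F/\delta\mu\,(\mu'-\mu)(\mathrm{d}\theta)\,\mathrm{d}\varepsilon$. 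Collecting all contributions yields $L_F$ and $\bar L_F$ with exactly the same functional form as in Proposition \ref{prop:verification-prop}, up to the replacement of $|f|_{\mathfrak C^k}$ by $|g|_{\mathfrak C^k}$ and $\tau$ by $\tau_2$ in the $\mu$-estimates.
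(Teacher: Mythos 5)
Your proposal is a correct and faithful elaboration of the paper's one-line proof, which simply defers to the single-agent argument of Proposition \ref{prop:verification-prop}: you correctly identify the joint occupancy kernel, the flat derivatives in each argument, the extra cross-term $I_1$ coming from the Lipschitz dependence of $d^{\pi_\nu,\zeta_\mu}$ and $Q^{\pi_\nu,\zeta_\mu}$ on the other player's measure, and the resolvent-based estimate that controls it. This matches the intended approach, and since each piece is the straightforward two-player analogue of the corresponding single-agent estimate, no genuine gap remains.
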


\section{Additional notation and definitions}

Following \cite[Definition $5.43$]{Carmona2018ProbabilisticTO}, we recall the notion of differentiability on the space of probability measure that we utilize throughout the paper.
\begin{definition}[Flat differentiability on $\mathcal{P}_1(\mathbb R^d)$]
\label{def:flat-derivative}
We say a function $F:\mathcal{P}_1(\mathbb R^d)\to \mathbb R$ is in $\mathcal{C}^1$, if there exists a continuous function $\frac{\delta F}{\delta \mu}:\mathcal{P}_1(\mathbb R^d)\times \mathbb R^d \to \mathbb{R},$ with respect to the product topology on $\mathcal{P}_1(\mathbb R^d) \times \mathbb R^d,$ called the flat derivative of $F,$ for which there exists $\kappa > 0$ such that for all $(\mu,x) \in \mathcal{P}_1(\mathbb R^d) \times \mathbb R^d,$ $\left|\frac{\delta F}{\delta \mu}(\mu,x)\right| \leq \kappa\left(1+|x|\right),$ and for all $\mu' \in \mathcal{P}_1(\mathbb R^d),$
\begin{equation}
\label{eq:flat-der}
\lim_{\varepsilon \searrow 0 }\frac{F(\mu^\varepsilon)- F(\mu)}{\varepsilon} =
\int_{\mathbb R^d} \frac{\delta F}{\delta \mu} (\mu, x) (\mu'-\mu) (\mathrm{d}x), \quad \textnormal{with $\mu^\varepsilon =\mu + \varepsilon (\mu' - \mu)$\,,}
\end{equation}
and $\int_{\mathbb R^d} \frac{\delta F}{\delta \mu} (\mu, x) \mu(\mathrm{d}x)=0.$
\end{definition}
\begin{remark}
\label{rmk:FTC}
One can show that if $F:\mathcal{P}_1(\mathbb R^d) \to \mathbb R^d$ admits a flat derivative $\frac{\delta F }{\delta \mu},$ then for all $\mu,\mu'\in \mathcal{P}_1(\mathbb R^d),$ the function $[0,1]\ni \varepsilon \mapsto f(\mu^\varepsilon) $ is
continuous on $[0,1]$ and differentiable on $(0,1)$ with derivative 
$\frac{\mathrm{d}}{\mathrm{d} \varepsilon}f(\mu^\varepsilon) = \int_{\mathbb R^d} \frac{\delta F}{\delta \mu} (\mu^\varepsilon, x) (\mu'-\mu) (\mathrm{d}x)$ (see \cite[Theorem 2.3]{jourdain}).
Hence, by the fundamental theorem of calculus, $F(\mu')-F(\mu)=\int_0^1 \int_{\mathbb R^d} \frac{\delta F}{\delta \mu} (\mu^\varepsilon, x) (\mu'-\mu) (\mathrm{d}x)\mathrm{d}\varepsilon,$
provided that $\varepsilon \mapsto \int  \frac{\delta F}{\delta \mu} (\mu^\varepsilon, x) (\mu'-\mu)(\mathrm{d}x)$ is integrable.
\end{remark}


\end{document}